\theoremstyle{plain}
\newtheorem{main}{Theorem}
\newtheorem{theorem}[equation]{Theorem}
\newtheorem{lemma}[equation]{Lemma}
\newtheorem{proposition}[equation]{Proposition}
\theoremstyle{definition}
\newtheorem{definition}[equation]{Definition}
\newtheorem{condition}[equation]{Condition}
\theoremstyle{remark}
\newcommand{\dv}{\operatorname{div}}
\newcommand{\capacity}{\operatorname{cap}}
\newcommand{\dist}{\operatorname{dist}}
\newcommand{\intr}{\operatorname{int}}
\newcommand{\tr}{\operatorname{tr}}
\numberwithin{equation}{section}
\newcommand{\bR}{\mathbb{R}}
\providecommand{\abs}[1]{\lvert#1\rvert}
\providecommand{\Abs}[1]{\left\lvert#1\right\rvert}
\providecommand{\norm}[1]{\lVert#1\rVert}
\renewcommand{\vec}[1]{\boldsymbol{#1}}
\newcommand{\newliminf}{\mathop{\mathrm{liminf}\vphantom{\mathrm{sup}}}}
\begin{document}
\title[Dirichlet problem for elliptic equations in double divergence form]
{Regular boundary points and the Dirichlet problem for elliptic equations in double divergence form}

\author[H. Dong]{Hongjie Dong}
\address[H. Dong]{Division of Applied Mathematics, Brown University,
182 George Street, Providence, RI 02912, United States of America}
\email{Hongjie\_Dong@brown.edu}
\thanks{H. Dong was partially supported by the NSF under agreement DMS-2350129.}

\author[D. Kim]{Dong-ha Kim}
\address[D. Kim]{Research Institute of Mathematics, Seoul National University, 1 Gwanak-ro, Gwanak-gu, Seoul 08826, Republic of Korea}
\email{kdh.pde@snu.ac.kr}

\author[S. Kim]{Seick Kim}
\address[S. Kim]{Department of Mathematics, Yonsei University, 50 Yonsei-ro, Seodaemun-gu, Seoul 03722, Republic of Korea}
\email{kimseick@yonsei.ac.kr}
\thanks{S. Kim is supported by the National Research Foundation of Korea (NRF) under agreement NRF-2022R1A2C1003322.}

\subjclass[2010]{Primary 35J25, 35J67}

\keywords{Dirichlet problem; Wiener criterion; regular boundary points; stationary Fokker--Planck--Kolmogorov equation}

\begin{abstract}
We study the Dirichlet problem for second-order elliptic operators in double divergence form, which arise as formal adjoints of non-divergence form operators and include the stationary Fokker--Planck--Kolmogorov equation.
Assuming that the leading coefficients have Dini mean oscillation and that the lower-order coefficients satisfy natural integrability conditions, we construct the Perron solution in arbitrary bounded domains.
We prove that a boundary point is regular with respect to the operator if and only if it satisfies the classical Wiener criterion for the Laplacian.
In particular, the Dirichlet problem is uniquely solvable in every bounded domain that is regular for the Laplacian.
\end{abstract}

\maketitle

\section{Introduction}
We consider the second-order elliptic operator in double divergence form,
\[
L^*u=\dv^2(\mathbf A u)-\dv(\boldsymbol b u)+cu = D_{ij}(a^{ij}u)-D_i(b^i u)+cu,
\]
defined on a domain $\Omega\subset\mathbb R^d$, $d\ge2$, where the summation convention is adopted.

The principal coefficient matrix $\mathbf A=(a^{ij})$ is assumed to be symmetric, uniformly elliptic, and of Dini mean oscillation.
In addition, the lower-order coefficients are assumed to satisfy
\[
\boldsymbol{b}=(b^1,\ldots,b^d)\in L^{p_0}(\Omega),
\qquad c\in L^{p_0/2}(\Omega), \qquad c\le0,
\]
for some $p_0>d$.
The precise assumptions are stated in Conditions \ref{cond1} and \ref{cond2}.

The operator $L^*$ is the formal adjoint of the second-order elliptic operator $L$ in non-divergence form, given by
\[
Lv=\tr(\mathbf{A} D^2v)+\boldsymbol{b} \cdot Dv+cv =a^{ij}D_{ij}v+b^iD_iv+cv.
\]
Thus the study of equations in double divergence form is naturally complementary to the study of equations in non-divergence form.
This is in sharp contrast with the divergence form setting: the formal adjoint of a divergence form operator is again of divergence form, whereas the formal adjoint of a non-divergence form operator is, in general, of double divergence form.
Hence the adjoint theory of non-divergence form equations leads outside the original class of operators, and it is precisely here that the parallel between divergence form and non-divergence form equations breaks down.

Such operators arise naturally in various areas of analysis and mathematical physics. 
A notable example is the stationary Fokker--Planck--Kolmogorov equation, which describes the steady-state distribution of diffusion processes.
For a comprehensive treatment of such equations, we refer the reader to \cite{BKRS2015}.

This paper investigates the Dirichlet problem associated with the operator $L^*$:
\begin{equation}	\label{eq_dirichlet}
L^*u=0 \quad \text{in } \Omega, \qquad u=g \quad \text{on } \partial\Omega.
\end{equation}
A function $u$ is said to be a solution of \eqref{eq_dirichlet} if it is a weak solution of $L^*u=0$ in $\Omega$, in the sense of Definition \ref{def01}, and attains the boundary data $g$ continuously on $\partial\Omega$.

Our main result shows that, under the above assumptions on the coefficients, the Dirichlet problem \eqref{eq_dirichlet} admits a unique solution $u\in C(\overline\Omega)$ whenever $\Omega$ is regular with respect to the Laplacian.
More generally, for an arbitrary bounded domain $\Omega$, we construct the Perron solution of \eqref{eq_dirichlet} and prove that a boundary point $x_0\in\partial\Omega$ is regular with respect to $L^*$, in the sense of Definition \ref{def_regpt}, if and only if it satisfies the Wiener criterion for the Laplacian.

The analysis of the Dirichlet problem \eqref{eq_dirichlet} presents several intrinsic difficulties. Notably, the operator $L^*$ does not, in general, satisfy the maximum principle.
For example, in the one-dimensional equation
\[
(a(x)u)''=0 \quad \text{in } (0,1),
\]
one solution is given by $u(x)=1/a(x)$, which may attain its maximum strictly inside the interval if $a$ has a strict interior minimum.
This departure from classical elliptic theory significantly complicates the qualitative analysis of solutions.

Moreover, in general, solutions of $L^*u=0$ do not gain regularity beyond that of the leading coefficients.
Again, this phenomenon is already visible in one dimension: for the equation
\[
(a(x)u)''=0,
\]
the solution $u(x)=1/a(x)$ is no smoother than the coefficient $a(x)$ itself.
Another key difference from divergence form and non-divergence form equations is that constant functions are not, in general, solutions of $L^*u=0$, even in the absence of lower-order coefficients.
Consequently, the solution space is not invariant under the addition of constants.

These distinctive features necessitate the development of methods tailored to equations in double divergence form and, in particular, to the analysis of the Dirichlet problem \eqref{eq_dirichlet}.

The characterization of regular boundary points is a fundamental problem in the theory of elliptic equations.
For the Laplace equation, this problem was classically resolved by Wiener through a criterion based on capacity.
Subsequent work extended the theory to more general classes of elliptic equations.
In the case of divergence form equations with bounded measurable coefficients, Littman, Stampacchia, and Weinberger \cite{LSW63} proved that the regular boundary points are precisely those for the Laplacian.
This result was further generalized by Gr\"uter and Widman \cite{GW82}, who treated equations with possibly non-symmetric coefficients.

For elliptic equations in non-divergence form, the situation is more delicate.
Oleinik \cite{Oleinik} proved the equivalence of regular boundary points with those for the Laplacian under the assumption that the coefficients are of class $C^{3,\alpha}$.
Herv\'e \cite{Herve} extended this result to H\"older continuous coefficients, and Krylov \cite{Krylov67} further generalized it to Dini continuous coefficients.
However, Miller \cite{Miller} constructed a counterexample showing that the equivalence may fail even for uniformly continuous coefficients; see also \cite{Bauman84a, KY17}.
Bauman \cite{Bauman85} established a Wiener-type criterion for non-divergence form equations with continuous coefficients, although her result did not settle whether the regular boundary points for the operator coincide with those for the Laplacian.
Recently, the present authors resolved this question by proving the equivalence under the assumption that the leading coefficients have Dini mean oscillation; see \cite{DKK25a, DKK25b}.

Compared with the extensive literature on divergence form and non-divergence form equations, the Dirichlet problem for equations in double divergence form has received relatively limited attention.
Nevertheless, several notable contributions exist; see, for example, \cite{Sjogren73, Bauman84b, FS84, MMcO2007, EM2017}.

In \cite{Sjogren73}, Sj\"ogren studied the Dirichlet problem \eqref{eq_dirichlet} under the assumptions that the coefficients are locally H\"older continuous and that $\Omega$ is a bounded $C^{1,\alpha}$ domain.
This work was later extended in \cite{Sjogren75}, where he treated Dini continuous leading coefficients and locally bounded lower-order coefficients, assuming that $\Omega$ is of class $C^2$.
In addition, Sj\"ogren proved in \cite{Sjogren73} that weak solutions of $L^*u=0$ are locally H\"older continuous when the coefficients are locally H\"older continuous, and are continuous when the leading coefficients are Dini continuous.

More recently, it was shown in \cite{DK17, DEK18} that weak solutions of $L^*u=0$ are continuous under the weaker assumption that the leading coefficients have Dini mean oscillation.

The novelty of the present work lies in extending the classical theory of boundary regularity to elliptic equations in double divergence form under minimal continuity assumptions on the leading coefficients.
Since operators in double divergence form do not generally satisfy the maximum principle and do not preserve constants, the arguments used for divergence form and non-divergence form equations require substantial modifications.
We overcome these difficulties by first establishing a comparison principle that plays the role of the maximum principle, and then by introducing a tailored notion of capacity adapted to the double divergence structure.
We show that the resulting notion of boundary regularity is nevertheless governed by the classical Wiener criterion for the Laplacian.
To the best of our knowledge, the present work is the first to study the Dirichlet problem for elliptic equations in double divergence form at the level of generality considered here.

The paper is organized as follows.
In Section \ref{sec_main}, we introduce the assumptions on the coefficients and state the main results.
Section \ref{sec3} contains preliminary results, including the construction of the Perron solution in an arbitrary domain.
In the same section, we establish two-sided pointwise bounds for the Green function $G^*(x,y)$ associated with the operator $L^*$, which play a central role in our analysis.
These bounds are derived from earlier results \cite{HK20, DK21} on the Green function $G(x,y)$ for the operator $L$, together with the symmetry identity $G^*(x,y)=G(y,x)$.

In Section \ref{sec4}, we introduce the notions of potential and capacity adapted to the operator $L^*$, and prove a  theorem showing that the $L^*$-capacity is comparable to the corresponding capacity for the Laplacian.
Section \ref{sec5} contains the proof of one of our main results: in dimensions three and higher, a boundary point is regular with respect to $L^*$ if and only if the classical Wiener criterion is satisfied at that point.
Finally, Section \ref{sec_2d} is devoted to the two-dimensional analogue of this result.

\section{Main Results}			\label{sec_main}

We fix the following concentric open balls:
\begin{equation}			\label{eq_balls}
\mathcal{B}=B_{4R_0}(0),\quad \mathcal{B}'=B_{2R_0}(0), \quad \mathcal{B}''=B_{R_0}(0),
\end{equation}
where $R_0>0$ is chosen sufficiently large so that $\Omega\subset \mathcal{B}''$.
These balls will remain fixed throughout the paper.

\begin{condition}			\label{cond1}
The coefficients of $L$ are measurable and defined in the whole space $\bR^d$.
The principal coefficient matrix $\mathbf{A}=(a^{ij})$ is symmetric and satisfies the uniform ellipticity condition
\[
\lambda \abs{\xi}^2 \le \mathbf{A}(x) \xi  \cdot \xi \le \lambda^{-1} \abs{\xi}^2,\quad \forall x \in \bR^d,\;\;\forall \xi \in \bR^d,
\]
where $\lambda \in (0,1]$ is a constant.
The lower-order coefficients $\boldsymbol{b}=(b^1,\ldots,b^d)$ and $c$ satisfy
\[
\boldsymbol{b}\in L^{p_0}_{\mathrm{loc}}(\mathbb{R}^d), \qquad c\in L^{p_0/2}_{\mathrm{loc}}(\mathbb R^d)
\]
for some $p_0>d$, and
\[
\norm{\vec b}_{L^{p_0}(\mathcal{B})}+ \norm{c}_{L^{p_0/2}(\mathcal{B})} \le \Lambda,
\]
where $\Lambda=\Lambda(\mathcal B)<\infty$.
In addition, we assume that $c \le 0$.
\end{condition}

\begin{condition}			\label{cond2}
The mean oscillation function $\omega_{\mathbf A}: \bR_+ \to \bR$ defined by
\[
\omega_{\mathbf A}(r):=\sup_{x\in \mathcal{B}} \fint_{\mathcal{B} \cap B_r(x)} \,\abs{\mathbf A(y)-\bar {\mathbf A}_{x,r}}\,dy, \quad \text{where }\;\bar{\mathbf A}_{x,r} :=\fint_{\mathcal{B} \cap B_r(x)} \mathbf A,
\]
satisfies the Dini condition
\[
\int_0^1 \frac{\omega_{\mathbf A}(t)}t \,dt <+\infty.
\]
\end{condition}

The following statements summarize the main results of the paper. 
Although they are repeated from the main text, we include them here for the reader's convenience.

\begin{main}[Theorems \ref{thm0800sat} and \ref{thm1510mon}]
Assume that Conditions \ref{cond1} and \ref{cond2} hold.
Let $\Omega$ be a bounded open subset of $\mathbb R^d$, $d\ge2$.
Then a point $x_0\in\partial\Omega$ is regular with respect to $L^*$, in the sense of Definition \ref{def_regpt}, if and only if it is regular with respect to the Laplacian.
\end{main}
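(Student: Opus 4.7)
The plan is to implement the classical Wiener-type program, but adapted to the adjoint operator $L^*$. The equivalence will follow from three pillars, each corresponding to one of Sections~\ref{sec3}--\ref{sec5}: a Perron theory for $L^*$, sharp two-sided bounds for the Green's function $G^*$, and an $L^*$-capacity that is comparable to the classical Newtonian capacity. These combine to yield a Wiener test for $L^*$ whose series is equivalent, term-by-term up to constants, to the classical one, so that simultaneous divergence characterizes regularity for both operators.

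First I would set up the Perron solution $H_g^*$ for boundary data $g \in C(\partial\Omega)$. The continuity of weak solutions of $L^* u = 0$ under Dini mean oscillation, proved in \cite{DK17, DEK18}, provides exactly the pointwise control needed for an upper-envelope construction, and gives the notion of $L^*$-regularity in Definition~\ref{def_regpt}: $x_0$ is regular for $L^*$ if $H_g^*(x) \to g(x_0)$ as $x \to x_0$ for every $g \in C(\partial \Omega)$. Some care is required because constants are not in general in the kernel of $L^*$ and the maximum principle fails, so the monotonicity and comparison steps in Perron's method have to be phrased through solution families and the Green representation rather than by adding constants. Next, the adjoint identity $G^*(x,y) = G(y,x)$ transfers the two-sided bounds of \cite{HK20, DK21} for the non-divergence Green's function $G$ onto $G^*$, giving
\[
c_1 \abs{x-y}^{2-d} \le G^*(x,y) \le c_2 \abs{x-y}^{2-d}, \qquad x \ne y,\; d \ge 3,
\]
with the analogous logarithmic bound when $d=2$. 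I would then define an $L^*$-potential $\cR^E(x) = \int G^*(x,y)\,d\mu_E(y)$ and a capacity $\capacity^*(E)$ for compacts $E \Subset \Omega$, and from the kernel comparability deduce
\[
c_1\,\capacity(E) \le \capacity^*(E) \le c_2\,\capacity(E),
\]
where $\capacity$ is the Newtonian capacity.

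The heart of the argument is then a Wiener criterion for $L^*$: $x_0$ is $L^*$-regular iff
\[
\sum_{k \ge 1} 2^{k(d-2)}\,\capacity^*(E_k) = \infty, \qquad E_k := \bigl(\overline{B_{2^{-k}}(x_0)}\setminus B_{2^{-k-1}}(x_0)\bigr) \setminus \Omega.
\]
For sufficiency, I would use the $L^*$-equilibrium potential of each dyadic shell of $\Omega^c$ as a quantitative barrier, summing the contributions against $H_g^* - g(x_0)$ to force continuity at $x_0$. For necessity, I would argue by contrapositive: if the series converges, I would construct an explicit $g \in C(\partial\Omega)$ whose Perron solution has a genuine jump at $x_0$, by concentrating test potentials on the thin dyadic complement and exploiting the sharp Green bound. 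Combining the capacity comparability with this Wiener test and the classical Wiener criterion for the Laplacian yields the stated equivalence in $d \ge 3$; the two-dimensional case requires the logarithmic variant of all estimates and is handled separately in Section~\ref{sec_2d}. The hardest step will be the necessity direction of the Wiener test, since the failure of the maximum principle for $L^*$ rules out standard barrier/comparison arguments. I expect to follow the strategy developed by the authors in \cite{DKK25a, DKK25b} for the non-divergence setting, replacing barriers by quantitative potential-theoretic estimates anchored in the sharp Green function bounds — a route in which the Dini mean oscillation hypothesis enters critically, as Miller's counterexample \cite{Miller} shows that weaker continuity is insufficient.
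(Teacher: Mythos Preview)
Your overall strategy---Perron theory for $L^*$, two-sided Green's function bounds via the adjoint identity, capacity comparison, and a Wiener test---matches the paper's architecture. But there is a genuine gap at exactly the point you flag as delicate: you correctly note that constants are not $L^*$-solutions and that the naive maximum principle fails, yet your proposed workaround (``phrased through solution families and the Green representation rather than by adding constants'') is not a mechanism. The paper's actual device is a single auxiliary function $W$: the continuous solution of $L^*W=0$ in a large ball $\mathcal{B}$ with $W=1$ on $\partial\mathcal{B}$, bounded above and below by positive constants (Lemma~\ref{lem02}). This $W$ replaces the constant $1$ everywhere. The strong minimum and comparison principles hold for $u/W$ (Lemmas~\ref{lem03} and~\ref{lem2334sun}), which is what makes the Perron machinery run; the capacitary potential of $E$ is defined as the regularized infimum over supersolutions $v\ge W$ on $E$ (Definition~\ref{def_rfn}), not $v\ge 1$; and the regularity characterization is $\hat u_{E_r}(x_0)=W(x_0)$ (Lemma~\ref{lem_charact}). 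Without $W$ you have no comparison principle to build Perron solutions, no normalization for the equilibrium measure, and no target value against which to test $\hat u_{E_r}(x_0)$ in the necessity direction of the Wiener test (which in the paper proceeds by bounding the Green representation of $\hat u_r(x_0)$ and showing it falls strictly below $W(x_0)$).

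A secondary remark: your claim that the failure of the maximum principle ``rules out standard barrier/comparison arguments'' overstates the obstruction. Barriers and comparison survive intact once phrased relative to $W$ (see the discussion after Definition~\ref{def_regpt} and Lemma~\ref{lem1629mon}); in particular the sufficiency half of the Wiener test (Lemma~\ref{lem2119thu}) is an iterative decay estimate for $1-u_{E_\rho}/W$, entirely in the barrier spirit. Once you insert $W$ in place of $1$ throughout, your outline becomes the paper's proof.
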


\begin{main}[Theorem \ref{thm0802sat} and \ref{thm1515mon}]
Assume that Conditions \ref{cond1} and \ref{cond2} hold.
Let $\Omega$ be a bounded regular domain in $\mathbb R^d$, $d\ge2$, in the sense of Definition \ref{def_reg}.
Then, for every $f\in C(\partial\Omega)$, the Dirichlet problem
\[
L^*u=0 \;\text{ in }\;\Omega, \quad u=f \;\text{ on }\;\partial\Omega
\]
has a unique solution $u\in C(\overline\Omega)$.
\end{main}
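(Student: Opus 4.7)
I would treat existence and uniqueness separately. Existence reduces directly to the Perron construction carried out in Section~\ref{sec3} and to the definition of a regular point. For uniqueness, since $L^*$ admits no maximum principle, I would dualize to the non-divergence operator $L$, where the classical theory applies, and use the Green's function bounds from Section~\ref{sec3} to handle the boundary behavior.

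\emph{Existence.} Take $u := H_f$ to be the Perron solution associated with the boundary datum $f \in C(\partial\Omega)$, whose construction in an arbitrary bounded domain is carried out in Section~\ref{sec3}. Then $u$ is a weak solution of $L^* u = 0$ in $\Omega$, and, under the Dini mean oscillation hypothesis, continuous in $\Omega$ by \cite{DK17, DEK18}. The hypothesis that $\Omega$ is a regular domain says precisely that every $x_0 \in \partial\Omega$ is a regular point for $L^*$, so by Definition~\ref{def_regpt} one has $\lim_{x \to x_0} H_f(x) = f(x_0)$. Hence $u \in C(\overline\Omega)$ and it solves the Dirichlet problem.

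\emph{Uniqueness.} If $u_1, u_2 \in C(\overline\Omega)$ are two solutions and $w := u_1 - u_2$, then $w$ is a continuous weak solution of $L^* w = 0$ vanishing on $\partial\Omega$, and the goal is $w \equiv 0$. For arbitrary $\varphi \in C_c^\infty(\Omega)$, I would solve the dual Dirichlet problem $L v = \varphi$ in $\Omega$ with $v = 0$ on $\partial\Omega$. Such a $v \in C(\overline\Omega) \cap W^{2,p}_{\loc}(\Omega)$ for every $p < \infty$ exists because, by the first Main Theorem, regularity of $\partial\Omega$ for $L^*$ is equivalent to regularity for $\Delta$, and by \cite{DKK25a, DKK25b} the latter is in turn equivalent to regularity for $L$ under the Dini mean oscillation hypothesis. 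Testing the weak formulation of $L^* w = 0$ against $\eta_k v$, where $\{\eta_k\}$ is a family of cutoffs exhausting $\Omega$ from within, and expanding via $L(\eta_k v) = \eta_k L v + v L\eta_k + 2 a^{ij} D_i \eta_k D_j v - c \eta_k v$, the $c\,\eta_k v w$ contributions cancel, leaving $\int_\Omega w \eta_k \varphi \, dx$ equal to a sum of three commutator integrals supported on the thin shell where $D\eta_k \neq 0$. Passing $k \to \infty$ then yields $\int_\Omega w \varphi \, dx = 0$, and the arbitrariness of $\varphi$ forces $w \equiv 0$.

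\emph{Main obstacle.} The decisive step is the control of the commutator integrals $\int w v\, a^{ij} D_{ij} \eta_k$, $\int w v\, b^i D_i \eta_k$, and $\int w\, a^{ij} D_i \eta_k D_j v$ in the limit $k \to \infty$. Since $|D \eta_k| \sim k$ and $|D^2 \eta_k| \sim k^2$ on a shell of width $\sim 1/k$, their vanishing demands quantitative decay of both $w$ and $v$, together with suitable boundedness of $D v$, near $\partial\Omega$. These boundary estimates should follow from the two-sided pointwise bounds on $G^*$ developed in Section~\ref{sec3}, exploiting the symmetry $G^*(x,y) = G(y,x)$ to transfer between the $L^*$- and $L$-formulations; the integrability hypotheses $\vec b \in L^{p_0}_{\loc}$, $c \in L^{p_0/2}_{\loc}$ with $p_0 > d$ absorb the lower-order contributions, and the sign condition $c \le 0$ ensures the solvability of the dual problem $L v = \varphi$. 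The Dini mean oscillation condition on $\mathbf A$ is precisely what makes this boundary analysis feasible.
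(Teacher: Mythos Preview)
Your existence argument is essentially the paper's: once Theorem~\ref{thm0800sat} (resp.~\ref{thm1510mon}) identifies regular points for $L^*$ with those for $\Delta$, every boundary point of a regular domain is $L^*$-regular, and the Perron solution $\overline H_f$ (which is an $L^*$-solution by Lemma~\ref{lem1034thu}) attains $f$ continuously on $\partial\Omega$.

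Your uniqueness argument, however, is both unnecessary and incomplete. You assert that ``$L^*$ admits no maximum principle,'' but the paper has already established the substitute you need: Lemma~\ref{lem04} (built on Lemma~\ref{lem03} and the auxiliary function $W$ of Lemma~\ref{lem02}) says that any $u\in C(\overline\Omega)$ solving $L^*u=0$ in $\Omega$ satisfies $\max_{\overline\Omega}\abs{u}\le N\sup_{\partial\Omega}\abs{u}$. Applying this to $w=u_1-u_2$ gives $w\equiv 0$ in one line. Equivalently, the comparison principle of Lemma~\ref{lem2334sun}(b) applied to $\pm w$ does the job. This is how the paper regards Theorems~\ref{thm0802sat} and~\ref{thm1515mon} as immediate.

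Your proposed duality-and-cutoff route has a genuine gap at the step you flag as the ``main obstacle.'' In a domain that is merely Wiener-regular, neither $w$ nor the dual solution $v$ need decay at any quantitative rate near $\partial\Omega$; there is no global $W^{2,p}(\Omega)$ estimate for $v$ and no Lipschitz-type boundary control on $w$. The Green's function bounds in Theorems~\ref{thm_green_function} and~\ref{thm_green_function2d} are for balls, not for $\Omega$, and do not yield the pointwise boundary decay you would need to kill the commutators $\int w\,v\,a^{ij}D_{ij}\eta_k$ and $\int w\,a^{ij}D_i\eta_k D_j v$ against blowup $\abs{D\eta_k}\sim k$, $\abs{D^2\eta_k}\sim k^2$. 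So even if the scheme could be salvaged with more work, it is far heavier than the one-line application of Lemma~\ref{lem04}.
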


\section{Preliminary}			\label{sec3}
\subsection{Some preliminary results}

We record several results concerning solutions of the equation $L^*u=0$ under Conditions \ref{cond1} and \ref{cond2}.

\begin{definition}		\label{def01}
We say that $u\in L^1_{\rm loc}(\Omega)$ is a weak solution of
\[
L^*u=0\quad\text{in }\;\Omega
\]
if $\boldsymbol b u\in L^1_{\rm loc}(\Omega)$, $cu\in L^1_{\rm loc}(\Omega)$, and
\[
\int_\Omega u \left(a^{ij}D_{ij}\eta + b^i D_i \eta + c\eta \right) dx =0
\]
for every test function $\eta\in C_c^\infty(\Omega)$.
Let $\Omega$ be a bounded $C^{1,1}$ domain, and let $g\in L^1(\partial\Omega)$.
We say that $u\in L^1(\Omega)$ is a weak solution of
\[
L^*u=0 \quad \text{in }\; \Omega,\qquad u=g \quad \text{on }\; \partial\Omega,
\]
if $\boldsymbol b u\in L^1(\Omega)$, $cu\in L^1(\Omega)$, and
\begin{equation} \label{eq0958sat}
\int_\Omega u\left(a^{ij}D_{ij}\eta+b^iD_i\eta+c\eta\right) dx=
\int_{\partial\Omega} g\,a^{ij}D_j\eta\,\nu_i\,dS
\end{equation}
for every $\eta\in C^\infty(\overline\Omega)$ satisfying $\eta=0$ on $\partial\Omega$.
Here $\nu=(\nu_1,\ldots,\nu_d)$ denotes the unit outer normal vector to $\partial\Omega$.
\end{definition}

The following theorem establishes solvability of the Dirichlet problem \eqref{eq_dirichlet} in bounded $C^{1,1}$ domains.

\begin{theorem}	\label{thm01}
Assume that Conditions \ref{cond1} and \ref{cond2} hold.
Let $\Omega$ be a bounded $C^{1,1}$ domain, and let $g\in C(\partial\Omega)$.
Then the Dirichlet problem \eqref{eq_dirichlet} has a unique weak solution $u\in C(\overline\Omega)$.
Moreover, if $g \ge 0$ on $\partial\Omega$, then $u\ge0$ in $\Omega$.
\end{theorem}

\begin{proof}
The existence and uniqueness of a weak solution $u\in L^{q_0}(\Omega)$, where
\[
1/q_0+2/p_0=1,
\]
follow from the transposition, or duality, method.
This approach relies on the unique solvability of the Dirichlet problem
\begin{equation}			\label{eq1649sun}
L v=\varphi \;\text{ in }\;\Omega,\qquad v=0\;\text{ on }\;\partial\Omega,
\end{equation}
for each $\varphi\in L^{p_0/2}(\Omega)$, with solutions sought in the space $\mathring W^{2,p_0/2}(\Omega)$.
Here $\mathring W^{2,p_0/2}(\Omega)$ denotes the closure, in the $W^{2,p_0/2}$-norm, of the set
\[
\{v \in C^\infty(\overline \Omega): v=0 \;\text{ on }\;\partial\Omega\}.
\]

Define the linear functional $T:L^{p_0/2}(\Omega)\to\mathbb R$ by
\[
T(\varphi)=\int_{\partial\Omega} g \,a^{ij}D_jv\,\nu_i\,dS,
\]
where $v\in \mathring W^{2,p_0/2}(\Omega)$ is the unique solution of \eqref{eq1649sun}.
The existence and uniqueness of such $v$ are guaranteed by \cite[Theorem 2.8]{Krylov2023e}.
By the trace theorem and the $W^{2,p}$-estimate for \eqref{eq1649sun}, we have
\begin{align*}
\Abs{\int_{\partial\Omega} g \,a^{ij}D_jv \,\nu_i\,dS}& \le C \norm{g}_{L^{q_0}(\partial\Omega)} \norm{Dv}_{L^{p_0/2}(\partial\Omega)} \le C  \norm{g}_{L^{q_0}(\partial\Omega)} \norm{Dv}_{W^{1,p_0/2}(\Omega)} \\
&\le C  \norm{g}_{L^{q_0}(\partial\Omega)} \norm{v}_{W^{2,p_0/2}(\Omega)}\le  C  \norm{g}_{L^\infty(\partial\Omega)}  \norm{\varphi}_{L^{p_0/2}(\Omega)}.
\end{align*}
Hence, $T$ is a bounded functional on $L^{p_0/2}(\Omega)$.

By the Riesz representation theorem, there exists a unique $u\in L^{q_0}(\Omega)$ such that
\[
T(\varphi)=\int_\Omega u \varphi,\qquad  \varphi \in L^{p_0/2}(\Omega).
\]
Since $u\in L^{q_0}(\Omega)$, the assumptions on the lower-order coefficients imply that $\boldsymbol b u\in L^1(\Omega)$ and $cu\in L^1(\Omega)$.

For any $\eta\in C^\infty(\overline\Omega)$ with $\eta=0$ on $\partial\Omega$, taking $\varphi=L\eta$ in the definition of $T$ gives the identity \eqref{eq0958sat}.
Furthermore, by \cite[Theorem 1.8]{DEK18}, we have $u\in C(\overline\Omega)$; see also \cite[Theorem 2.9]{Kim2023}.
Thus $u\in C(\overline\Omega)$ is the unique weak solution of \eqref{eq_dirichlet}.

To prove that $u\ge0$ in $\Omega$ when $g\ge0$ on $\partial\Omega$, let $\varphi\in C_c^\infty(\Omega)$ with $\varphi\ge0$, and let $v\in\mathring W^{2,p_0/2}(\Omega)$ be the solution of \eqref{eq1649sun}.
Then
\[
\int_\Omega u\varphi\,dx = \int_{\partial\Omega} g\,a^{ij}D_jv\,\nu_i\,dS.
\]
Since $v=0$ on $\partial\Omega$, we have $Dv=(Dv\cdot\nu)\nu$ on $\partial\Omega$. Hence
\begin{equation}	\label{eq1056sun}
\int_\Omega u\varphi\,dx
=\int_{\partial\Omega} g\,(Dv\cdot\nu)\,a^{ij}\nu_i\nu_j\,dS.
\end{equation}
Because $Lv=\varphi\ge0$ in $\Omega$ and $v=0$ on $\partial\Omega$, the maximum principle gives $v\le0$ in $\Omega$.
Consequently, $Dv\cdot\nu\ge0$ on $\partial\Omega$.

Since $g\ge0$ on $\partial\Omega$ and $a^{ij}\nu_i\nu_j\ge \lambda>0$ on $\partial\Omega$, the right-hand side of \eqref{eq1056sun} is nonnegative.
Hence
\[
\int_\Omega u\varphi\,dx\ge0
\]
for every $\varphi\in C_c^\infty(\Omega)$ with $\varphi\ge0$.
It follows that $u\ge0$ in $\Omega$.
\end{proof}

The following result is the Harnack inequality for nonnegative solutions of $L^*u=0$.
For the proof, we refer the reader to \cite[Theorem 4.3]{GK24}.
We also note that a Harnack inequality was previously established in \cite{Mamedov} under the stronger assumption that $\mathbf A$ is Dini continuous.

\begin{theorem}[Harnack inequality]		\label{thm02}
Assume that Conditions \ref{cond1} and \ref{cond2} hold.
Let $u$ be a nonnegative weak solution of $L^*u=0$ in $B_{4r}=B_{4r}(x_0)\subset\mathcal B$. Then
\[
\sup_{B_r} u \le N \inf_{B_r}u,
\]
where the constant $N$ depends only on $d$, $\lambda$, $\Lambda$, $\omega_{\mathbf A}$, $p_0$, and $R_0$.
\end{theorem}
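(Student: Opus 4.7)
My plan is to deduce the Harnack inequality from a Poisson-kernel representation of $u$ on a $C^{1,1}$ subdomain, paired with the boundary theory for the Green's function of the non-divergence operator $L$ under Dini mean oscillation.

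After rescaling to $r=1$ (the Dini mean oscillation of $\mathbf A$ and the condition $p_0>d$ are preserved with uniform constants), I would work on the $C^{1,1}$ ball $B=B_{2}(x_0)$. By Theorem \ref{thm01}, $u|_B\in C(\overline B)$ is the unique weak solution of $L^{*}u=0$ in $B$ with its own continuous boundary trace. Testing against $v\in \mathring W^{2,p_0/2}(B)$ solving $Lv=\varphi$, $v=0$ on $\partial B$, and writing $v(y)=\int_B G_B(y,z)\varphi(z)\,dz$ via the non-divergence Green's function $G_B$ for $L$ on $B$, a swap of the order of integration in \eqref{eq0958sat} produces the representation
\[
u(z)=\int_{\partial B} u(y)\,P^{*}(z,y)\,dS(y),\qquad P^{*}(z,y):=a^{ij}(y)\nu_i(y)D_{y_j}G_B(y,z),
\]
with $P^{*}(z,y)\ge 0$ on $\partial B$ by the Hopf-sign computation already used in the proof of Theorem \ref{thm01}.

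Since $u\ge 0$, the Harnack inequality will follow once I establish a two-sided comparability
\[
N^{-1}\,P^{*}(z_2,y)\le P^{*}(z_1,y)\le N\,P^{*}(z_2,y)\qquad(z_1,z_2\in B_1(x_0),\ y\in\partial B),
\]
with $N$ depending only on the admissible data. Because $G_B(\cdot,z)\equiv 0$ on $\partial B$, the quantity $P^{*}(z,y)$ equals $a^{ij}(y)\nu_i\nu_j$ (pinched between $\lambda$ and $\lambda^{-1}$) times the inward normal derivative of $G_B(\cdot,z)$ at $y$, so what is required is a \emph{boundary Harnack-type principle} for the pair of $L$-Green's functions with interior poles $z_1$ and $z_2$. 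Interior comparability of $G_B(\cdot,z_1)$ and $G_B(\cdot,z_2)$ near $\partial B$ is provided by the two-sided pointwise Green's function bounds of \cite{HK20, DK21}, while the passage from this interior comparison to a comparison of inward normal derivatives at $\partial B$ relies on $C^{1}$-type boundary regularity for $L$-solutions, which holds precisely under Dini mean oscillation of $\mathbf A$.

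The principal obstacle is exactly this last boundary gradient comparison for the non-divergence Green's function: a quantitative Hopf lemma supplies the lower bound and a boundary $C^{1}$ estimate supplies the upper bound, both with constants depending only on $d,\lambda,\Lambda,\omega_{\mathbf A},p_0$, and $R_0$. Once these boundary estimates for $G_B$ are secured, integrating the representation formula against the nonnegative boundary trace of $u$ yields $u(z_1)\le N\,u(z_2)$ for any $z_1,z_2\in B_1(x_0)$, which is the desired Harnack inequality.
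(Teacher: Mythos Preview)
The paper does not prove this here; it simply invokes \cite[Theorem~4.3]{GK24}, where the Harnack inequality for double-divergence equations is established directly. Your route is genuinely different: you transfer the question to the non-divergence side via a Poisson-kernel representation on a $C^{1,1}$ subball, and reduce to a pointwise comparison $P^{*}(z_1,y)\approx P^{*}(z_2,y)$. Since $G_B(\cdot,z_1)$ and $G_B(\cdot,z_2)$ are $L$-harmonic in an annulus abutting $\partial B$, vanish on $\partial B$, and are comparable on an inner sphere by the interior Green's function bounds of \cite{HK20,DKK25a}, the maximum principle for $L$ propagates the comparability to the full annulus; the $C^1$-up-to-the-boundary regularity of \cite{DEK18} then lets you pass to inward normal derivatives without a separate quantitative Hopf lemma. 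The payoff of your approach is that it is entirely reducible to the well-developed non-divergence theory; the payoff of citing \cite{GK24} is that it covers the result under weaker hypotheses on $\vec b,c$ and avoids the boundary analysis altogether.

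One trap to flag: the interior comparison you need is of $G_B(y,z_1)$ with $G_B(y,z_2)$, and viewed as functions of $z$ these are $L^{*}$-harmonic, so you must not invoke any Harnack principle in the $z$-variable (that would be circular). You have to rely on the two-sided pointwise bounds $G_B(y,z)\approx|y-z|^{2-d}$, which in \cite{HK20,DKK25a} are obtained with purely non-divergence tools (Krylov--Safonov, Alexandrov-type estimates), so there is no circularity once you cite those bounds rather than a Harnack step. Note also that Theorem~\ref{thm_green_function} here is stated only for the fixed ball $\mathcal{B}$ when $d\ge3$; your argument needs the analogous bounds on $B_{2r}(x_0)$ for arbitrary $r$, so either cite the scale-general statement from \cite{DKK25a} directly or carry the rescaling through explicitly (the DMO modulus and the $L^{p_0}$, $L^{p_0/2}$ norms of $\vec b$, $c$ behave well under dilation since $p_0>d$, with constants depending on $R_0$).
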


\begin{lemma}	\label{lem01}
Assume that Conditions \ref{cond1} and \ref{cond2} hold.
Let $u\ge0$ be a weak solution of $L^*u=0$ in a bounded domain $\Omega$.
Then either $u>0$ in $\Omega$ or $u\equiv0$ in $\Omega$.
\end{lemma}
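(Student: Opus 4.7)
The statement is the strong minimum principle for nonnegative $L^*$-weak solutions, and with the Harnack inequality (Theorem~\ref{thm02}) already in hand, the argument is the classical clopen-set dichotomy. My plan is to pass first to a continuous representative of $u$, then show the zero set is simultaneously open and closed in $\Omega$, and conclude by connectedness.

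\textbf{Step 1: continuous representative.} Under Conditions~\ref{cond1} and \ref{cond2}, weak solutions of $L^*u=0$ are continuous on $\Omega$ by \cite{DK17,DEK18} (cited in the introduction). Replace $u$ by its continuous representative. Then the set
\[
Z:=\{x\in\Omega:u(x)=0\}
\]
is relatively closed in $\Omega$.

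\textbf{Step 2: $Z$ is open via Harnack.} Fix $x_0\in Z$. Since $\Omega\subset\mathcal{B}''$, for sufficiently small $r>0$ we have $B_{4r}(x_0)\subset\Omega\subset\mathcal{B}''\subset\mathcal{B}$, so Theorem~\ref{thm02} applies to $u$ on $B_{4r}(x_0)$ and yields
\[
\sup_{B_r(x_0)}u\le N\inf_{B_r(x_0)}u.
\]
Because $u\ge 0$ in $\Omega$ and $u(x_0)=0$, the infimum on the right equals $0$, forcing $u\equiv 0$ on $B_r(x_0)$. Hence $B_r(x_0)\subset Z$, so $Z$ is open.

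\textbf{Step 3: connectedness.} Since a domain is by convention connected, $Z$ being clopen in $\Omega$ implies either $Z=\emptyset$ or $Z=\Omega$. In the first case $u>0$ throughout $\Omega$, and in the second $u\equiv 0$.

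\textbf{Main obstacle.} There is essentially no technical obstacle once Theorem~\ref{thm02} is available: the only subtlety is to justify working with pointwise values, which is handled by invoking the continuity result of \cite{DK17,DEK18}, and to verify that the geometric hypothesis $B_{4r}(x_0)\subset\mathcal{B}$ in Theorem~\ref{thm02} is harmless here, which is immediate from the standing inclusion $\Omega\subset\mathcal{B}''$.
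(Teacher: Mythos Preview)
Your proof is correct and takes essentially the same approach as the paper: both deduce the dichotomy from the Harnack inequality (Theorem~\ref{thm02}) via a connectedness argument. The paper phrases it dually---it shows the positivity set $\{u>0\}$ is open by Harnack and then propagates positivity along chains of overlapping balls---whereas you show the zero set $Z=\{u=0\}$ is clopen; these are equivalent presentations of the same idea, and your explicit invocation of the continuity result from \cite{DK17,DEK18} is a nice touch that the paper leaves implicit.
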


\begin{proof}
Suppose that $u\not\equiv0$ in $\Omega$.
Then there exists $x_0\in\Omega$ such that $u(x_0)>0$.
By the Harnack inequality, Theorem~\ref{thm02}, it follows that $u>0$ in a neighborhood of $x_0$.
Applying the Harnack inequality iteratively along a chain of overlapping balls contained in $\Omega$, we conclude that $u>0$ throughout $\Omega$.
\end{proof}

\begin{lemma}	\label{lem02}
Assume that Conditions \ref{cond1} and \ref{cond2} hold.
Then there exists a function $W\in C(\overline{\mathcal B})$ such that $W$ is a weak solution of $L^*W=0$ in $\mathcal B$ and
\begin{equation}	\label{eq1532tue}
\gamma_0 \le W \le \gamma_0^{-1}
\quad \text{in } \mathcal B,
\end{equation}
where $\gamma_0\in(0,1]$ depends only on $d$, $\lambda$, $\Lambda$, $\omega_{\mathbf A}$, $p_0$, and $R_0$.
\end{lemma}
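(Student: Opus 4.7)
The plan is to produce $W$ as a normalization of the solution to an auxiliary Dirichlet problem on a ball strictly larger than $\mathcal{B}$. Set $\widetilde{\mathcal{B}} := B_{5R_0}(0)$, which contains $\overline{\mathcal{B}}$ with $\dist(\overline{\mathcal{B}}, \partial\widetilde{\mathcal{B}}) = R_0$. Since the coefficients are defined on all of $\mathbb{R}^d$ with $\vec{b}, c$ locally $L^{p_0}, L^{p_0/2}$, respectively, and $\mathbf{A}$ having the Dini mean oscillation property (all intrinsic features), Conditions~\ref{cond1} and \ref{cond2} remain valid on $\widetilde{\mathcal{B}}$ after mildly enlarging the bound $\Lambda$ and the modulus $\omega_{\mathbf A}$ by an $R_0$-dependent factor, so that Theorems~\ref{thm01} and \ref{thm02} apply there as well.

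First, I apply Theorem~\ref{thm01} to $\widetilde{\mathcal{B}}$ (which is smooth, hence $C^{1,1}$) with constant boundary datum $g \equiv 1$ to obtain a unique $U \in C(\overline{\widetilde{\mathcal{B}}})$ satisfying $L^* U = 0$ in $\widetilde{\mathcal{B}}$ and $U = 1$ on $\partial\widetilde{\mathcal{B}}$. The nonnegativity clause of Theorem~\ref{thm01} gives $U \ge 0$ in $\widetilde{\mathcal{B}}$, and since $U$ continuously attains the value $1$ on $\partial\widetilde{\mathcal{B}}$, it is not identically zero; Lemma~\ref{lem01} then upgrades this to $U > 0$ strictly throughout $\widetilde{\mathcal{B}}$.

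Second, I upgrade this qualitative positivity to a quantitative two-sided bound on the compact set $\overline{\mathcal{B}}$ via a Harnack chain. Cover $\overline{\mathcal{B}}$ by a finite collection of balls $B_{R_0/8}(x_k)$, $k = 1, \ldots, K$, with centers $x_k \in \overline{\mathcal{B}}$ and $K$ depending only on $d$; for each $k$ the dilate $B_{R_0/2}(x_k)$ lies in $\widetilde{\mathcal{B}}$, so Theorem~\ref{thm02} gives $\sup_{B_{R_0/8}(x_k)} U \le N \inf_{B_{R_0/8}(x_k)} U$. Chaining over overlapping balls produces a constant $C_0$, depending only on $d, \lambda, \Lambda, \omega_{\mathbf A}, p_0, R_0$, with
\[
\sup_{\overline{\mathcal{B}}} U \le C_0 \inf_{\overline{\mathcal{B}}} U.
\]
Setting $M := \sup_{\overline{\mathcal{B}}} U$ and $W := U/M$ on $\overline{\mathcal{B}}$, the function $W$ lies in $C(\overline{\mathcal{B}})$, solves $L^* W = 0$ weakly in $\mathcal{B}$ by linearity, and satisfies $1/C_0 \le W \le 1$ on $\overline{\mathcal{B}}$; the choice $\gamma_0 := 1/C_0 \in (0,1]$ yields \eqref{eq1532tue}.

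The main (mild) obstacle is the bookkeeping required to extend Conditions~\ref{cond1} and \ref{cond2} from $\mathcal{B}$ to $\widetilde{\mathcal{B}}$ with the same structure of constants, so that Theorems~\ref{thm01} and \ref{thm02} can be invoked on the larger ball; this is routine because $\widetilde{\mathcal{B}}$ and $\mathcal{B}$ are of comparable size. The alternative of solving directly on $\mathcal{B}$ with $U \equiv 1$ on $\partial\mathcal{B}$ would instead force us to produce a quantitative boundary-continuity estimate for $U$ up to $\partial\mathcal{B}$ to obtain a lower bound there, which is considerably more delicate.
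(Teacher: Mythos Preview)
Your approach differs from the paper's: the paper solves $L^*W=0$ directly in $\mathcal{B}$ with boundary datum $1$, obtains $W>0$ on $\overline{\mathcal{B}}$ from Theorem~\ref{thm01} and Lemma~\ref{lem01}, and then invokes continuity on the compact $\overline{\mathcal{B}}$ to extract $\gamma_0$, simply asserting its dependence on the listed parameters. Your idea of passing to a strictly larger ball and Harnack-chaining back to $\overline{\mathcal{B}}$ is a clean way to make that quantitative dependence explicit while avoiding any boundary estimate---precisely the point you flag in your last paragraph.

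There is, however, a real gap in your extension step. Condition~\ref{cond1} bounds $\|\vec b\|_{L^{p_0}}+\|c\|_{L^{p_0/2}}$ by $\Lambda$ \emph{only on $\mathcal{B}$}, and Condition~\ref{cond2} defines $\omega_{\mathbf A}$ via oscillations over $\mathcal{B}$. The qualitative membership $\vec b\in L^{p_0}_{\mathrm{loc}}$, $c\in L^{p_0/2}_{\mathrm{loc}}$ gives no control of these quantities on $\widetilde{\mathcal{B}}\setminus\mathcal{B}$; your claimed ``$R_0$-dependent factor'' does not exist, since the coefficients could be arbitrarily large there. As written, the Harnack constant from Theorem~\ref{thm02} on $\widetilde{\mathcal{B}}$---and hence your $C_0$ and $\gamma_0$---would depend on $\|\vec b\|_{L^{p_0}(\widetilde{\mathcal{B}})}$ and the modulus of $\mathbf A$ over $\widetilde{\mathcal{B}}$, which are not among the stated parameters. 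The repair is easy and preserves your strategy: since you need $L^*W=0$ only in $\mathcal{B}$, first replace $\vec b,c$ by zero outside $\mathcal{B}$ and extend $\mathbf A$ from $\mathcal{B}$ to $\widetilde{\mathcal{B}}$ so as to keep uniform ellipticity and a Dini modulus controlled by $\omega_{\mathbf A}$ (e.g.\ by blending $\mathbf A$ with a constant elliptic matrix via a cutoff). Theorems~\ref{thm01} and \ref{thm02} then apply on $\widetilde{\mathcal{B}}$ with constants depending only on the original data, and on $\mathcal{B}$ your $U$ still solves $L^*U=0$ for the original operator.
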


\begin{proof}
Let $W\in C(\overline{\mathcal B})$ be the solution of the boundary value problem
\[
L^*W=0 \quad \text{in }\; \mathcal B,
\qquad W=1 \quad \text{on }\; \partial\mathcal B.
\]
The existence of such a solution follows from Theorem \ref{thm01}, which also gives $W\ge0$ in $\mathcal B$.
By Lemma \ref{lem01}, we have $W>0$ in $\mathcal B$.
Since $W$ is continuous on the compact set $\overline{\mathcal B}$, it attains a positive minimum and a finite maximum on $\overline{\mathcal B}$.
Therefore, there exists a constant $\gamma_0 \in (0,1]$, depending only on the specified parameters, such that inequality \eqref{eq1532tue} holds. 
\end{proof}

\begin{lemma} \label{lem03}
Assume that Conditions \ref{cond1} and \ref{cond2} hold.
Let $W$ be the function given in Lemma \ref{lem02}, and let $\Omega\subset\mathcal B$ be a domain.
If $u\in C(\overline\Omega)$ is a weak solution of $L^*u=0$ in $\Omega$, then neither the minimum nor the maximum of $u/W$ is attained in $\Omega$, unless $u/W$ is constant.
\end{lemma}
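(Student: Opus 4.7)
The plan is to exploit the linearity of $L^*$ together with the strong minimum principle already established in Lemma \ref{lem01}. Suppose $v := u/W$ attains its minimum $m := \inf_{\Omega} v$ at some interior point $x_0 \in \Omega$. Since $W \ge \gamma_0 > 0$ on $\mathcal{B} \supset \Omega$ and $u \in C(\overline\Omega)$, the quotient $v$ is continuous on $\overline\Omega$, so $m$ is finite. Now set $\phi := u - mW$. By the definition of $m$ and the positivity of $W$, we have $\phi \ge 0$ in $\Omega$, and $\phi(x_0) = 0$.

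The key step is to verify that $\phi$ is itself a weak solution of $L^*\phi = 0$ in $\Omega$. By hypothesis, $u$ satisfies Definition \ref{def01} tested against any $\eta \in C^\infty_c(\Omega)$. Since $W$ is a weak solution of $L^*W=0$ in $\mathcal{B}$ (Lemma \ref{lem02}) and $C^\infty_c(\Omega) \subset C^\infty_c(\mathcal{B})$, the same identity holds for $W$. Linearity of the defining integral identity then yields $L^*\phi = 0$ in $\Omega$ in the weak sense. Applying Lemma \ref{lem01} to the nonnegative weak solution $\phi$, which vanishes at the interior point $x_0$, forces $\phi \equiv 0$ in $\Omega$. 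Hence $u \equiv mW$, i.e., $v \equiv m$ is constant. The argument for the maximum is identical after replacing $\phi$ by $\psi := MW - u$ with $M := \sup_\Omega v$.

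I do not anticipate a serious obstacle; the only bookkeeping point is to ensure that the integrability conditions required in Definition \ref{def01} hold for $\phi$, which follows from Condition \ref{cond1}, the hypothesis on $u$, and the boundedness of $W$ on $\mathcal{B}$, so that $\vec b \phi$ and $c\phi$ lie in $L^1_{\rm loc}(\Omega)$. Thus the lemma is essentially a corollary of Lemma \ref{lem01} once the translation $u \mapsto u - mW$ is put in place.
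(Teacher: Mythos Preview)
Your proof is correct and follows essentially the same argument as the paper: both define the shifted function $u - mW$ (with $m$ the interior minimum of $u/W$), observe that it is a nonnegative weak solution of $L^*=0$ vanishing at an interior point, and invoke Lemma~\ref{lem01} to conclude it is identically zero. The only cosmetic difference is that the paper handles the maximum by replacing $u$ with $-u$, whereas you write $MW-u$ directly; these are equivalent.
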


\begin{proof}
Set
\[
v=u-mW, \qquad m:=\min_{\overline\Omega}\,\frac{u}{W}.
\]
Since $W>0$ on $\overline\Omega$, we have $v\ge0$ in $\overline\Omega$.
Moreover, $v\in C(\overline\Omega)$ and $L^*v=0$ in $\Omega$.

Suppose that the minimum of $u/W$ is attained at some point $x_0\in\Omega$.
Then
\[
\frac{u(x_0)}{W(x_0)}=m,
\]
and hence $v(x_0)=0$.
By Lemma \ref{lem01}, it follows that $v\equiv0$ in $\Omega$, that is, $u\equiv mW$ in $\Omega$.
Thus $u/W$ is constant in $\Omega$.
The assertion for the maximum follows by applying the same argument to $-u$.
\end{proof}
 
\begin{lemma}		\label{lem04}
Assume that Conditions \ref{cond1} and \ref{cond2} hold.
Let $\Omega\subset\mathcal B$ be a bounded domain, and let $u\in C(\overline\Omega)$ be a weak solution of $L^*u=0$ in $\Omega$.
Then
\begin{equation}	\label{eq0757mon}
\max_{\overline\Omega}\, \abs{u} \le N \max_{\partial\Omega}\, \abs{u},
\end{equation}
where $N>0$ depends only on $d$, $\lambda$, $\Lambda$, $\omega_{\mathbf A}$, $p_0$, and $R_0$.
\end{lemma}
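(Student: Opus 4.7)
The plan is to reduce the bound to an $L^\infty$-estimate on the ratio $u/W$, where $W$ is the positive solution furnished by Lemma~\ref{lem02}. Since $W$ is bounded above and below by the positive constants $\gamma_0^{-1}$ and $\gamma_0$ in $\mathcal{B} \supset \overline\Omega$, controlling $u$ is equivalent, up to a factor of $\gamma_0^{-2}$, to controlling $u/W$.

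The key step is to invoke the weak maximum/minimum principle for the quotient $u/W$ provided by Lemma~\ref{lem03}. Namely, since $u \in C(\overline\Omega)$ and $W \in C(\overline\Omega)$ with $W>0$, the continuous function $u/W$ attains its supremum and infimum on the compact set $\overline\Omega$. Lemma~\ref{lem03} then forces these extrema to be attained on $\partial\Omega$ unless $u/W$ is constant (in which case the conclusion is trivial). Hence
\[
\sup_{\overline\Omega}(u/W) = \sup_{\partial\Omega}(u/W), \qquad \inf_{\overline\Omega}(u/W) = \inf_{\partial\Omega}(u/W).
\]

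Combining these identities with the two-sided bound $\gamma_0 \le W \le \gamma_0^{-1}$ from Lemma~\ref{lem02}, I would estimate, for any $x \in \overline\Omega$,
\[
|u(x)| = W(x)\,\bigl|u(x)/W(x)\bigr| \le \gamma_0^{-1} \sup_{\partial\Omega}|u/W| \le \gamma_0^{-1} \cdot \frac{\sup_{\partial\Omega}|u|}{\inf_{\partial\Omega} W} \le \gamma_0^{-2} \sup_{\partial\Omega}|u|,
\]
which yields \eqref{eq0757mon} with $N = \gamma_0^{-2}$, a constant of the allowed dependencies by Lemma~\ref{lem02}.

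There is essentially no obstacle here: everything is machinery already in place. The only minor point to verify is that Lemma~\ref{lem03} applies to $u/W$ on $\overline\Omega$, which it does since $\Omega \subset \mathcal{B}$ (so $W$ is defined and strictly positive on $\overline\Omega$) and $u$ is a weak solution of $L^*u=0$ in $\Omega$. The whole lemma is really a packaging of Lemma~\ref{lem03} into a uniform two-sided bound in terms of the boundary data.
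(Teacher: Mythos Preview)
Your proposal is correct and follows essentially the same approach as the paper: both invoke Lemma~\ref{lem03} to push the extrema of $u/W$ to $\partial\Omega$, then use the two-sided bound $\gamma_0 \le W \le \gamma_0^{-1}$ from Lemma~\ref{lem02} to obtain \eqref{eq0757mon} with $N=\gamma_0^{-2}$. The only cosmetic difference is that the paper estimates $u(x)$ from above and then applies the same to $-u$, whereas you handle both signs at once via $|u/W|$.
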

 
 \begin{proof}
By \eqref{eq1532tue} and Lemma \ref{lem03}, for every $x\in\overline\Omega$ we have
\[
u(x) \le W(x)\,\max_{\overline\Omega}\,\frac{u}{W}
=W(x)\,\max_{\partial\Omega}\,\frac{u}{W} \le \gamma_0^{-2}\max_{\partial\Omega}\, \abs{u}.
\]
Applying the same estimate to $-u$, we obtain \eqref{eq0757mon} with $N=\gamma_0^{-2}$.
\end{proof}

\subsection{Supersolutions,  subsolutions, and Perron method}
Let $B$ be a ball contained in $\mathcal B$.
For each $f\in C(\partial B)$, let $u\in C(\overline B)$ denote the solution of the Dirichlet problem
\[
L^*u=0 \quad \text{in } \; B,
\qquad u=f \quad \text{on }\; \partial B.
\]
The existence and uniqueness of such a solution follow from Theorem \ref{thm01}.

Fix $x\in B$.
The map $f\mapsto u(x)$ defines a linear functional on $C(\partial B)$.
Moreover, by Lemma \ref{lem04},
\[
\abs{u(x)} \le \max_{\overline B}\, \abs{u} \le C \max_{\partial B}\, \abs{f},
\]
and hence this functional is bounded.
It is also positive by Theorem \ref{thm01}.

By the Riesz representation theorem, there exists a finite positive Radon measure $\omega_B^x$ on $\partial B$ such that
\[
u(x)=\int_{\partial B} f\,d\omega_B^x \qquad
\text{for all } f\in C(\partial B).
\]
We call $\omega_B^x$ the $L^*$-harmonic measure at $x$ in $B$.

The existence of the $L^*$-harmonic measure for every ball $B\subset\mathcal B$ allows us to define $L^*$-supersolutions, $L^*$-subsolutions, and $L^*$-solutions as follows.

\begin{definition}
Let $\mathcal{D} \subset \mathcal{B}$ be an open set.
An extended real-valued function $u$ is called an $L^*$-supersolution in $\mathcal{D}$ if the following conditions hold:
\begin{enumerate}
\item[(i)]
$u$ is not identically $+\infty$ in any connected component of $\mathcal{D}$.
\item[(ii)]
$u>-\infty$ in $\mathcal{D}$.
\item[(iii)]
$u$ is lower semicontinuous in $\mathcal{D}$.
\item[(iv)]
For every ball $B\Subset\mathcal D$ and every $x\in B$,
\[
u(x) \ge \int_{\partial B} u\,d\omega^x_{B}.
\]
\end{enumerate}

An $L^*$-subsolution in $\mathcal D$ is defined analogously: $u$ is an $L^*$-subsolution if and only if $-u$ is an $L^*$-supersolution.

If $u$ is both an $L^*$-supersolution and an $L^*$-subsolution in $\mathcal D$, then we call $u$ an $L^*$-solution in $\mathcal D$.
\end{definition}

Note that an $L^*$-solution in $\mathcal D$ is necessarily finite and continuous in $\mathcal D$ by definition.

\begin{lemma}
A function $u$ is an $L^*$-solution in $\mathcal D$ if and only if $u$ is a continuous weak solution of $L^*u=0$ in $\mathcal D$.
\end{lemma}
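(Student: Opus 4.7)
My plan is to reduce both implications to the existence and uniqueness of continuous weak solutions on balls (Theorem \ref{thm01}) together with the maximum principle (Lemma \ref{lem04}). The preliminary observation is that an $L^*$-solution, being simultaneously lower and upper semicontinuous on $\mathcal{D}$, is automatically continuous. Thus the task reduces to showing that for a continuous function $u$ on $\mathcal{D}$, being a weak solution of $L^*u=0$ is equivalent to the mean-value identity $u(x) = \int_{\partial B} u \, d\omega^x_{B}$ holding for every ball $B \Subset \mathcal{D}$ and every $x \in B$.

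For the forward direction, I would assume $u \in C(\mathcal{D})$ is a weak solution, fix an arbitrary ball $B \Subset \mathcal{D}$, and let $v \in C(\overline{B})$ be the unique Dirichlet solution from Theorem \ref{thm01} with boundary datum $u|_{\partial B}$; by the very definition of the $L^*$-harmonic measure, $v(x) = \int_{\partial B} u \, d\omega^x_B$. The difference $w := u - v$ is then a continuous weak solution of $L^* w = 0$ in $B$ that vanishes on $\partial B$, and since $B \Subset \mathcal{D} \subset \mathcal{B}$, Lemma \ref{lem04} applied with $\Omega = B$ forces $w \equiv 0$. Hence $u = v$ on $\overline{B}$, delivering the mean-value equality (which simultaneously confirms that $u$ is an $L^*$-super- and subsolution).

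For the reverse direction, suppose $u$ is an $L^*$-solution, so in particular continuous on $\mathcal{D}$. Given any ball $B \Subset \mathcal{D}$, let $v \in C(\overline{B})$ be the Dirichlet solution with $v|_{\partial B} = u|_{\partial B}$; the representation $v(x) = \int_{\partial B} u \, d\omega^x_B = u(x)$ for $x \in B$ shows $u = v$ on $B$. Since $v$ satisfies the Dirichlet weak formulation \eqref{eq0958sat}, specializing the test function to any $\eta \in C^\infty_c(B)$ kills the boundary term and gives $\int_B u \, L\eta = 0$. Continuity of $u$ together with Condition \ref{cond1} provides $\boldsymbol{b} u, \, cu \in L^1_{\mathrm{loc}}(\mathcal{D})$, and a standard partition-of-unity argument propagates the interior weak identity from every $B \Subset \mathcal{D}$ to all of $\mathcal{D}$. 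The only mild subtlety throughout is keeping track of which version of the weak-solution concept (the interior one of Definition \ref{def01} versus the Dirichlet-type one in \eqref{eq0958sat}) is in play; this is handled precisely by the restriction to $C^\infty_c(B)$ test functions, and I expect no further obstacle.
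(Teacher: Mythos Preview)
Your proposal is correct and follows essentially the same route as the paper: identify $u$ on each ball $B\Subset\mathcal{D}$ with the Dirichlet solution from Theorem~\ref{thm01} (you make the uniqueness step explicit via Lemma~\ref{lem04}, whereas the paper leaves it implicit), then pass from balls to $\mathcal{D}$ by a partition of unity. The only difference is that you present the two implications in the opposite order and spell out the interplay between the interior and Dirichlet weak formulations, which the paper treats tersely.
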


\begin{proof}
Suppose first that $u$ is an $L^*$-solution in $\mathcal D$.
Then $u$ is continuous in $\mathcal{D}$.
Moreover, for every ball $B\Subset\mathcal D$ and every $x\in B$,
\begin{equation}			\label{eq0925thu}
u(x)=\int_{\partial B} u \,d\omega_{B}^{x}.
\end{equation}
Therefore, by Theorem \ref{thm01}, $u$ coincides in $B$ with the unique weak solution of $L^*u=0$ having boundary data $u\vert_{\partial B}$.
Since $B\Subset\mathcal D$ is arbitrary, it follows by a standard localization argument that $u$ is a weak solution of $L^*u=0$ in $\mathcal D$.

Conversely, suppose that $u$ is a continuous weak solution of $L^* u=0$ in $\mathcal D$.
Let $B\Subset\mathcal D$ be a ball.
By uniqueness of the Dirichlet problem in $B$, the identity \eqref{eq0925thu} holds for every $x\in B$.
Therefore $u$ is an $L^*$-solution in $\mathcal D$.
\end{proof}

The following lemma is the analogue of \cite[Lemma 3.21]{DKK25a}, and its proof is essentially unchanged.
We note, however, that the strong minimum principle for $L^*$ takes a different form.

\begin{lemma}			\label{lem2334sun}
Let $u\in\mathfrak S^+(\mathcal D)$, where $\mathfrak S^+(\mathcal D)$ denotes the set of all $L^*$-supersolutions in $\mathcal D$.
Then the following statements hold.
\begin{enumerate}
\item  (Strong minimum principle)
Assume that $\mathcal D$ is connected.
If $u\in\mathfrak S^+(\mathcal D)$ is nonnegative and $u(x_0)=0$ for some $x_0\in\mathcal D$, then $u\equiv0$ in $\mathcal D$.
\item (Comparison principle)
If $u \in \mathfrak{S}^+(\mathcal{D})$ and $\liminf_{y \to x,\; y\in \mathcal{D}} u(y) \ge 0$ for every $x \in \partial \mathcal{D}$, then $u \ge 0$ in $\mathcal{D}$.
\item
If $u\in\mathfrak S^+(\mathcal D)$, then for every $x_0\in\mathcal D$, $\liminf_{x \to x_0} u(x)=u(x_0)$.
\item
If $u,v\in\mathfrak S^+(\mathcal D)$ and $c>0$, then $cu$, $u+v$, $\min(u,v) \in \mathfrak{S}^+(\mathcal{D})$.
\item
(Pasting lemma)
Let $\mathcal D'\subset\mathcal D$ be open, and let $v\in\mathfrak S^+(\mathcal D')$.
Define
\[
w= \begin{cases}
\min(u,v)& \text{in }\;\mathcal{D}',\\
\phantom{\min}u &  \text{in }\; \mathcal{D} \setminus \mathcal{D}'.
\end{cases}
\]
If $w$ is lower semicontinuous on $\partial\mathcal D'$, then $w\in\mathfrak S^+(\mathcal D)$.
\end{enumerate}
\end{lemma}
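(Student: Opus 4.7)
My plan is to treat the five assertions in order, borrowing the axiomatic potential-theoretic template for superharmonic functions but accommodating two distinctive features of $L^*$: constants are not solutions, and no classical pointwise maximum principle is available. A common preliminary I would set up first is the positivity of the $L^*$-harmonic measure on open sets: given any nonempty open $U \subset \partial B$, choose $f \in C(\partial B)$ with $0 \le f \not\equiv 0$ and $\supp f \subset U$, solve $L^* v_f = 0$ in $B$ with $v_f = f$ on $\partial B$ by Theorem \ref{thm01}, and apply Lemma \ref{lem01} to conclude $v_f(x) > 0$, whence $\omega^x_B(U) > 0$.

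Part (1) is the only assertion whose form genuinely differs from the classical case, and it is also the only one for which $W$ is not needed. Suppose $u(x_0) = 0$ with $u \ge 0$. For every $r < \dist(x_0, \partial \mathcal{D})$, the supersolution inequality at $x_0$ gives
\[
0 = u(x_0) \ge \int_{\partial B_r(x_0)} u \, d\omega^{x_0}_{B_r(x_0)} \ge 0,
\]
so the integral vanishes. Combining lower semicontinuity of $u$ with the positivity of $\omega^{x_0}_{B_r(x_0)}$ on open sets forces $u \equiv 0$ on $\partial B_r(x_0)$, and varying $r$ yields $u \equiv 0$ on a full neighborhood of $x_0$. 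The zero set is therefore open in $\mathcal{D}$, closed by lower semicontinuity, and connectedness of $\mathcal{D}$ gives $u \equiv 0$. The familiar phrasing ``an interior minimum equals a global constant'' fails because constants are not $L^*$-solutions, which is precisely why the conclusion must be stated with the minimum value normalized to zero.

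For (2)--(4) I would follow the classical arguments after passing to the quotient $v := u/W$, where $W \in C(\overline{\mathcal{B}})$ is the strictly positive $L^*$-solution from Lemma \ref{lem02}. The identity $W(x) = \int W \, d\omega^x_B$ recasts the supersolution inequality as
\[
(v(x) - m) W(x) \ge \int_{\partial B} (v - m) W \, d\omega^x_B, \qquad m \in \mathbb{R},
\]
restoring the affine invariance lost for $L^*$ itself. Applied with $m = \inf_{\overline{\mathcal{D}}} v$ at a putative interior infimum, the same open-set/LSC argument as in (1) propagates the equality $v \equiv m$ on the relevant component; the boundary hypothesis then forces $m \ge 0$, which proves (2). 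For (3), lower semicontinuity gives $\liminf u(x) \ge u(x_0)$; the reverse inequality follows by applying the supersolution inequality on $B_r(x_0)$ for small $r$ and noting that the total harmonic mass $\omega^{x_0}_{B_r(x_0)}(\partial B_r(x_0))$ tends to $1$ as $r \to 0$, which itself follows from Lemma \ref{lem04} applied to the difference between the boundary-data-$1$ solution and the normalized solution $W/W(x_0)$, together with continuity of $W$. Part (4) is a routine verification, using $\int \min(u,v)\, d\omega \le \min\{\int u\, d\omega,\, \int v \, d\omega\}$ for the minimum.

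The pasting lemma (5) is the main technical obstacle, and I would establish it in two stages. First, I would verify the local super-mean-value property of $w$ at each $x_0 \in \mathcal{D}$ by cases: for $x_0 \in \mathcal{D}'$ or $x_0 \in \mathcal{D} \setminus \overline{\mathcal{D}'}$ sufficiently small balls lie wholly in the same region and the inequality follows from (4) or from $u \in \mathfrak{S}^+(\mathcal{D})$; for $x_0 \in \partial \mathcal{D}' \cap \mathcal{D}$ one has $w(x_0) = u(x_0)$ and $w \le u$ throughout $\mathcal{D}$, so $\int w \, d\omega^{x_0}_{B_r(x_0)} \le \int u \, d\omega^{x_0}_{B_r(x_0)} \le u(x_0) = w(x_0)$. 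Second, I would upgrade the local super-mean-value to the global version on arbitrary balls $B \Subset \mathcal{D}$ by approximation: choose $\varphi_n \in C(\partial B)$ with $\varphi_n \nearrow w|_{\partial B}$, solve $L^* v_n = 0$ in $B$ with data $\varphi_n$ by Theorem \ref{thm01}, and apply the comparison principle from (2) to $w - v_n$ (which inherits the local super-mean-value property) to conclude $v_n \le w$ in $B$; monotone convergence then yields the desired global inequality. The hypothesis of lower semicontinuity of $w$ on $\partial \mathcal{D}'$ is used exactly to make $w$ globally LSC and to legitimize this approximation at the interface, and this is where I expect essentially all of the real work of the pasting argument to lie.
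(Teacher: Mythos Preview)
Your approach is correct and matches the paper's for parts (a) and (b): both prove (a) via an open--closed argument using positivity of the harmonic measure on open arcs, and reduce (b) to (a) through the auxiliary solution $W$ (you pass to the quotient $u/W$, the paper forms $u-mW$, which is the same maneuver). The paper defers (c)--(e) to \cite[Lemma~3.21]{DKK25a}, and your proposed fill-ins are standard and sound; in particular, the local-to-global upgrade in (e) via the comparison principle works because the proofs of (a) and (b) only invoke the super-mean-value inequality on small balls.
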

\begin{proof}
We only prove part (a) and (b) since the rest of proof is identical to that of \cite[Lemma 3.21]{DKK25a}.
\begin{enumerate}[leftmargin=*]
\item 
Let $U=\{x \in \mathcal{D}: u(x)=0\}$.
Since $u\ge0$ and $u$ is lower semicontinuous, the set $U$ is relatively closed in $\mathcal D$. We claim that $U$ is also open.
Once this is proved, the connectedness of $\mathcal D$ implies that $U=\mathcal D$, and the proof is complete.

To prove the claim, let $y\in U$.
Choose $r>0$ such that $B_r(y)\subset\mathcal D$.
Suppose, for contradiction, that there exists $y'\in B_r(y)\setminus U$, and set $\rho=\abs{y'-y}<r$.
Then $y'\in\partial B_\rho(y)$ and $u(y')>0$.
Since $u$ is lower semicontinuous, we can choose a continuous function $f$ on $\partial B_\rho(y)$ such that $0\le f \le u$ on $\partial B_\rho(y)$ and $f(y')>0$.
Define
\[
v(x):=\int_{\partial B_\rho(y)} f\,d\omega_{B_\rho(y)}^x,
\qquad x\in B_\rho(y).
\]
Then, $v$ is a nonnegative $L^*$-solution in $B_\rho(y)$, and by Lemma~\ref{lem01},  $v(y)>0$.

On the other hand, using $0\le f\le u$ on $\partial B_\rho(y)$ and the defining inequality for the supersolution $u$, we obtain
\[
0<v(y) = \int_{\partial B_\rho(y)} f\,d\omega_{B_\rho(y)}^y
\le \int_{\partial B_\rho(y)} u\,d\omega_{B_\rho(y)}^y \le u(y)=0,
\]
a contradiction.
Therefore no such point $y'$ exists, and hence $B_r(y)\subset U$. Thus $U$ is open.

\item
Suppose, for contradiction, that
\[
\liminf_{y \to x,\; y\in \mathcal{D}} u(y) \ge 0\quad \text{for every }\; x \in \partial \mathcal{D},
\]
but that $u(y_0)<0$ for some $y_0\in\mathcal D$.
Let $W>0$ be the function given in Lemma~\ref{lem02}, and set
\[
m:= \inf_{\mathcal{D}}\left(\frac{u}{W}\right).
\]
Then $m<0$ and this infimum is attained at some point $z_0 \in \mathcal D$.
Define
\[
v:=u-mW.
\]
Then $v\in\mathfrak S^+(\mathcal D)$, $v\ge0$ in $\mathcal D$, and $v(z_0)=0$.
By the strong minimum principle, it follows that $v\equiv0$ in $\mathcal D$.
Since $m<0$ and $W$ is continuous and strictly positive on $\overline{\mathcal D}$, for every $x\in\partial\mathcal D$ we have
\[
\liminf_{y \to x,\; y\in \mathcal{D}} u(y) = \liminf_{y \to x,\; y\in \mathcal{D}} mW(y) = mW(x)<0,
\]
contrary to the assumption.
Therefore $u\ge0$ in $\mathcal D$.
\qedhere
\end{enumerate}
\end{proof}

\begin{definition}
Let $u\in\mathfrak S^+(\mathcal D)$ and let $B\Subset\mathcal D$ be a ball.
We define the $L^*$-harmonic lowering of $u$ over $B$, denoted by $\mathscr{E}_{B}u$, by
\[
\mathscr{E}_{B}u(x) = \begin{cases}
\phantom{\int_{\partial B}}u(x)& \text{in }\;\mathcal{D} \setminus  B,\\
\int_{\partial B} u\,d\omega^x_{B} &  \text{in }\; B.
\end{cases}
\]
\end{definition}

\begin{lemma}
The following properties hold:
\begin{enumerate}
\item
$\mathscr{E}_{B}u \le u$.
\item
$\mathscr{E}_{B}u$ is an $L^*$-supersolution in $\mathcal{D}$.
\item
$\mathscr{E}_{B}u$ is an $L^*$-solution in $B$.
\end{enumerate}
\end{lemma}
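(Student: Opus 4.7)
For part (a), I invoke property (iv) of the $L^*$-supersolution $u$ applied to the ball $B \Subset \mathcal{D}$ itself: for each $x \in B$,
\[
u(x) \ge \int_{\partial B} u \, d\omega^x_B = \mathscr{E}_{B} u(x),
\]
while on $\mathcal{D}\setminus B$ the two functions agree by definition. The boundary integral is well defined because $u$ is lower semicontinuous, hence bounded below, on the compact set $\partial B$; concretely, one interprets it as $\sup_k \int_{\partial B} f_k \, d\omega^x_B$ along a pointwise monotone approximation $f_k \uparrow u|_{\partial B}$ with $f_k \in C(\partial B)$.

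For part (c), using the same approximants $f_k$, Theorem \ref{thm01} supplies $L^*$-solutions $v_k \in C(\overline{B})$ with $v_k = f_k$ on $\partial B$. The comparison principle (Lemma \ref{lem2334sun}(b)) applied to $u - v_k$ in $B$ yields $v_k \le u$, and monotone convergence of the harmonic measure integrals gives $v_k \uparrow \mathscr{E}_{B} u$ pointwise in $B$ with $\mathscr{E}_{B} u \le u < \infty$. To conclude that $\mathscr{E}_{B} u$ is an $L^*$-solution in $B$, I apply the Harnack inequality (Theorem \ref{thm02}) to the nonnegative differences $v_k - v_1$ to get local uniform boundedness on $B$, pass to the limit in the weak formulation $\int v_k L\eta = 0$ via dominated convergence to obtain that $\mathscr{E}_{B} u$ is a weak solution of $L^* u = 0$ in $B$, and finally invoke the Dini-mean-oscillation continuity theory of \cite{DEK18} to conclude that $\mathscr{E}_{B} u$ is continuous in $B$.

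For part (b), the main work is to verify lower semicontinuity at boundary points of $B$; once this is established, the pasting lemma (Lemma \ref{lem2334sun}(e)) with $\mathcal{D}' = B$ and $v = \mathscr{E}_{B} u|_{B}$ (an $L^*$-supersolution by part (c)) closes the argument, since part (a) yields $\min(u, \mathscr{E}_{B} u) = \mathscr{E}_{B} u$ in $B$, so the pasted function is precisely $\mathscr{E}_{B} u$. Lower semicontinuity on $\mathcal{D} \setminus \partial B$ is immediate. At a point $x_0 \in \partial B$ approached from $\mathcal{D} \setminus \overline{B}$, I use the lower semicontinuity of $u$. Approaching from inside $B$, I use $v_k \le \mathscr{E}_{B} u$ together with $v_k \in C(\overline{B})$ to get
\[
\liminf_{y \to x_0,\, y \in B} \mathscr{E}_{B} u(y) \ge \lim_{y \to x_0,\, y \in B} v_k(y) = f_k(x_0)
\]
for every $k$, and taking $\sup_k$ gives the required bound $\ge u(x_0) = \mathscr{E}_{B} u(x_0)$.

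The main obstacle I anticipate is the Harnack-convergence step in part (c): upgrading the pointwise monotone limit of weak solutions to a \emph{continuous} weak limit. The $L^\infty$ control from Harnack suffices to pass to the limit in the weak formulation, but the continuity of the limit is not automatic from classical interior estimates because the lower-order coefficients are only in the borderline $L^{p_0}$, $L^{p_0/2}$ classes and the leading coefficients carry only Dini mean oscillation; this is precisely the regime handled by \cite{DEK18}, on which the argument must lean.
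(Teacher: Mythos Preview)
Your argument is correct and follows the standard potential-theoretic route; since the paper's own proof is simply a pointer to \cite[Lemma~3.23]{DKK25a}, your write-up is in effect a faithful reconstruction of that argument in the present $L^*$ setting, leaning on exactly the tools the paper has set up (Theorem~\ref{thm01}, Lemma~\ref{lem2334sun}, Theorem~\ref{thm02}, and the continuity theory from \cite{DEK18}). The only cosmetic addition I would make is to note explicitly that sequences approaching $x_0\in\partial B$ along $\partial B$ are handled by the lower semicontinuity of $u$ itself, so that the three cases together give full lower semicontinuity of $\mathscr{E}_B u$ at $x_0$.
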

\begin{proof}
The proof is the same as that of \cite[Lemma 3.23]{DKK25a}.
\end{proof}

\begin{definition}
A collection $\mathscr F$ of $L^*$-supersolutions in $\mathcal D$ is said to be saturated in $\mathcal D$ if the following conditions hold:
\begin{enumerate}
\item[(i)]
If $u$, $v \in \mathscr F$, then $\min(u,v) \in \mathscr F$.
\item[(ii)]
If $u\in\mathscr F$ and $B\Subset\mathcal D$ is a ball, then $\mathscr E_Bu\in\mathscr F$.
\end{enumerate}
\end{definition}

\begin{lemma}			\label{lem1034thu}
Let $\mathscr{F}$ be a saturated collection of $L^*$-supersolutions in $\mathcal{D}$, and define
\[
v(x) := \inf_{u \in \mathscr{F}} u(x).
\]
If $v$ does not take the value $-\infty$ in $\mathcal D$, then $v$ is an $L^*$-solution in $\mathcal D$.
\end{lemma}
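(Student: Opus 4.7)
The plan is to run the standard Perron-style construction adapted to the $L^*$ setting, using the saturation properties of $\mathscr{F}$ together with the $L^*$-tools already developed: the existence of $L^*$-harmonic measure, the lowering operator $\mathscr{E}_B$, the Harnack inequality (Theorem \ref{thm02}), and the strong minimum principle (Lemma \ref{lem01}).

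First, fix an arbitrary ball $B \Subset \mathcal{D}$ and a point $x_0 \in B$. Choose a sequence $u_n \in \mathscr{F}$ with $u_n(x_0) \to v(x_0)$; by property (i) of the saturated family, I may replace $u_n$ by $\min(u_1,\ldots,u_n)$ to assume $\{u_n\}$ is pointwise decreasing. By property (ii), the lowerings $w_n := \mathscr{E}_B u_n$ also lie in $\mathscr{F}$. Lemma \ref{lem1033thu} gives $w_n \le u_n$ and that $w_n$ is an $L^*$-solution inside $B$. Since $w_n \ge v$ (as $w_n \in \mathscr{F}$), we still have $w_n(x_0) \to v(x_0)$. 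The sequence $\{w_n\}$ is decreasing: outside $B$ it equals the decreasing sequence $u_n$, and inside $B$ monotonicity of boundary data is preserved by the $L^*$-harmonic measure representation (a consequence of the nonnegativity statement in Theorem \ref{thm01}).

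Second, I would upgrade pointwise convergence to local uniform convergence in $B$. Each difference $w_n - w_m$ ($m>n$) is a nonnegative continuous $L^*$-solution in $B$, so iterating the Harnack inequality along a chain of balls gives $(w_n - w_m)(y) \le C(y,x_0)\,(w_n-w_m)(x_0)$ for every $y$ in the (connected) ball $B$. Hence $\{w_n\}$ is locally uniformly Cauchy, its limit $w$ is continuous in $B$, and passing to the limit in the weak formulation (or directly in the harmonic-measure identity) shows $w$ is an $L^*$-solution in $B$. By construction $w(x_0)=v(x_0)$ and $w \ge v$ throughout $B$.

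Third, I would show $w=v$ on all of $B$, which is the key step and the one most likely to hide a subtlety. Suppose for contradiction that $w(y_0) > v(y_0)$ at some $y_0 \in B$; then there exists $u^\ast \in \mathscr{F}$ with $u^\ast(y_0) < w(y_0)$. Applying the same construction to the sequence $\tilde u_n := \min(u_n,u^\ast) \in \mathscr{F}$ and its lowerings $\tilde w_n := \mathscr{E}_B \tilde u_n \in \mathscr{F}$ produces a decreasing sequence with limit $\tilde w$ that is an $L^*$-solution in $B$, satisfies $\tilde w \le w$, and obeys $\tilde w(y_0) \le u^\ast(y_0) < w(y_0)$. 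On the other hand $v(x_0) \le \tilde w(x_0) \le w(x_0) = v(x_0)$, so the nonnegative $L^*$-solution $w-\tilde w$ vanishes at $x_0$. The strong minimum principle (Lemma \ref{lem01}, applied in the connected set $B$) forces $w \equiv \tilde w$ in $B$, contradicting the strict inequality at $y_0$. Therefore $v=w$ on $B$, so $v$ is a continuous $L^*$-solution in $B$. Since $B \Subset \mathcal{D}$ was arbitrary, $v$ is an $L^*$-solution throughout $\mathcal{D}$, that is, $L^* v=0$ in $\mathcal{D}$ in the weak sense.

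The main obstacle I anticipate is the transition from pointwise to locally uniform convergence of $\{w_n\}$ in step two: one needs the Harnack chain applied to the nonnegative $L^*$-solutions $w_n-w_m$, which is legitimate only because each $w_n$ is genuinely a continuous weak solution inside $B$ (Lemma \ref{lem1033thu}(c)) and because Theorem \ref{thm02} applies to such solutions independently of their sign-changing behavior outside $B$. Once this is set up, the contradiction argument in step three is driven by the strong minimum principle, which in the $L^*$ setting is available precisely because constants need not be solutions — the correct substitute being Lemma \ref{lem01}.
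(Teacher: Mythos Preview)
Your argument is correct and is the classical Perron argument; the paper itself gives no independent proof here but simply refers to \cite[Lemma~3.25]{DKK25a}, whose proof is precisely this construction. The only $L^*$-specific adaptations are the ones you identify---using Harnack (Theorem~\ref{thm02}) on the nonnegative differences $w_n-w_m$ for locally uniform convergence, and invoking Lemma~\ref{lem01} in place of the classical strong minimum principle in the comparison step---and both are applied correctly.
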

\begin{proof}
The proof is the same as that of \cite[Lemma 3.25]{DKK25a}.
\end{proof}

\begin{definition}			\label{def_perron}
Let $f$ be a function on $\partial\mathcal D$.
Consider the Dirichlet problem
\[
L^*u=0 \;\text{ in }\; \mathcal{D},\quad
u=f\; \text{ on }\;\partial \mathcal{D}.
\]
The upper Perron solution $\overline H_f$ is defined by
\[
\overline H_f(x)=\inf \left\{v(x): v \in \mathfrak{S}^+(\mathcal{D}), \;\newliminf_{y\to x_0,\, y \in \mathcal{D}}\, v(y) \ge f(x_0)\;\text{ for all }\;x_0 \in\partial\mathcal{D}\right\}.
\]
Similarly, the lower Perron solution $\underline H_f$ is defined by
\[
\underline H_f(x)=\sup \left\{v(x): v \in \mathfrak{S}^-(\mathcal{D}), \;\limsup_{y\to x_0,\, y \in \mathcal{D}}\, v(y) \le f(x_0)\;\text{ for all }\;x_0 \in\partial\mathcal{D}\right\},
\]
where $ \mathfrak{S}^-(\mathcal{D})$ denotes the set of all $L^*$-subsolution in $\mathcal{D}$.
\end{definition}

It is clear that $\underline H_f \le \overline H_f$. 
Moreover,
\begin{equation}			\label{eq1208thu}
\underline H_f=-\overline H_{-f}.
\end{equation}

\begin{lemma}
Let $f\in C(\partial\mathcal D)$.
Then the upper and lower Perron solutions $\overline H_f$ and $\underline H_f$ are both $L^*$-solutions in $\mathcal D$.
\end{lemma}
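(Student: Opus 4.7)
The proof plan is to follow the classical Perron template, with the key modification that constants must everywhere be replaced by scaled copies of the globally positive $L^*$-solution $W$ supplied by Lemma \ref{lem02}. I will focus on $\overline H_f$; the statement for $\underline H_f$ is then immediate from the identity \eqref{eq1208thu}.

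First, set $M:=\sup_{\partial\mathcal{D}}|f|<\infty$ since $f\in C(\partial\mathcal{D})$ and $\partial\mathcal{D}$ is compact. The function $M\gamma_0^{-1}W$ is an $L^*$-solution (in particular in $\mathfrak{S}^+(\mathcal{D})$), and on $\partial\mathcal{D}$ it satisfies $M\gamma_0^{-1}W\ge M\ge f$, so it belongs to the defining family of $\overline H_f$. Hence $\overline H_f\le M\gamma_0^{-1}W\le M\gamma_0^{-2}$. For a lower bound, take any $v$ in this defining family and observe that $v+M\gamma_0^{-1}W$ is an $L^*$-supersolution (Lemma \ref{lem2334sun}(d)) whose $\liminf$ at each boundary point is at least $f+M\gamma_0^{-1}W\ge -M+M=0$; the comparison principle (Lemma \ref{lem2334sun}(b)) gives $v\ge -M\gamma_0^{-1}W\ge -M\gamma_0^{-2}$ throughout $\mathcal{D}$. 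Therefore $\overline H_f$ is a real-valued function bounded by $M\gamma_0^{-2}$ in absolute value.

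Next, let $\mathscr{F}$ denote the family used in Definition \ref{def_perron}. I claim $\mathscr{F}$ is saturated. If $u,v\in\mathscr{F}$, then $\min(u,v)\in\mathfrak{S}^+(\mathcal{D})$ by Lemma \ref{lem2334sun}(d), and clearly $\liminf_{y\to x_0}\min(u,v)(y)\ge f(x_0)$, so $\min(u,v)\in\mathscr{F}$. If $u\in\mathscr{F}$ and $B\Subset\mathcal{D}$, then $\mathscr{E}_B u\in\mathfrak{S}^+(\mathcal{D})$ by Lemma \ref{lem1033thu}(b); since $\mathscr{E}_B u=u$ on $\mathcal{D}\setminus B$ and $B$ stays away from $\partial\mathcal{D}$, the boundary $\liminf$ condition is inherited from $u$, so $\mathscr{E}_B u\in\mathscr{F}$. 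Now Lemma \ref{lem1034thu}, combined with the already-established finiteness of $\overline H_f=\inf_{v\in\mathscr{F}}v$, shows that $\overline H_f$ is an $L^*$-solution in $\mathcal{D}$.

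Finally, since $-f\in C(\partial\mathcal{D})$ as well, the same argument yields that $\overline H_{-f}$ is an $L^*$-solution, and the identity \eqref{eq1208thu} then gives that $\underline H_f=-\overline H_{-f}$ is also an $L^*$-solution. The one place where the argument genuinely departs from the Laplacian case, and hence the main point to be careful about, is exactly the finiteness step above: because constants are not $L^*$-solutions, the bounding super/subsolutions must be built from the distinguished function $W$ and its two-sided bounds from \eqref{eq1532tue}; everywhere else the reasoning is a direct transcription of the classical Perron method, now justified by Lemmas \ref{lem2334sun}, \ref{lem1033thu}, and \ref{lem1034thu}.
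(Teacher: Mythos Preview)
Your proof is correct and follows essentially the same approach as the paper: bound $\overline H_f$ above and below by $\pm M\gamma_0^{-1}W$ using the comparison principle, observe that the defining family $\mathscr{F}$ is saturated, and invoke Lemma~\ref{lem1034thu}; then pass to $\underline H_f$ via \eqref{eq1208thu}. Your version is in fact slightly more explicit than the paper's, which simply asserts that $\mathscr{F}$ is saturated without spelling out the verification.
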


\begin{proof}
Let $W$ be the function given in Lemma \ref{lem02}, and set
\[
w:= \gamma_0^{-1}\left(\max_{\partial \mathcal{D}}\,\abs{f}\right) W
\]
Then $w \in \mathscr{F}$, where
\[
\mathscr F:=\left\{v \in \mathfrak{S}^+(\mathcal{D}): \liminf_{y\to x_0,\, y \in \mathcal{D}}\, v(y) \ge f(x_0)\;\text{ for all }\;x_0 \in\partial\mathcal{D}\right\}.
\]
It follows that $w \ge \overline H_f$.
Moreover, by the comparison principle, $v \ge -w$ for every $v \in \mathscr{F}$.
Therefore,
\[
-w \le \overline{H}_f \le w.
\]
The class $\mathscr F$ is saturated.
Therefore, by Lemma \ref{lem1034thu}, $\overline H_f$ is an $L^*$-solution in $\mathcal D$. 
Using the identity \eqref{eq1208thu}, we conclude that $\underline H_f$ is also an $L^*$-solution in $\mathcal D$.
\end{proof}

\begin{definition}                \label{def_regpt}
A point $x_0\in\partial\mathcal D$ is called regular if, for every $f\in C(\partial\mathcal D)$,
\[
\lim_{x \to x_0, \, x\in \mathcal{D}}\, \underline H_f(x)=\lim_{x \to x_0, \, x\in \mathcal{D}}\, \overline H_f(x)=f(x_0).
\]
A function $w$ is called a barrier at $x_0$ with respect to $\Omega$ if the following conditions hold:
\begin{enumerate}[leftmargin=*]
\item[(i)]
$w$ is an $L^*$-supersolution in $\Omega$.
\item[(ii)]
For any $\delta>0$, there exists $\epsilon>0$ such that $w \ge \epsilon$ on $\partial\Omega \setminus B_\delta(x_0)$.
\item[(iii)]
$\lim_{x\to x_0,\,x \in \Omega} w(x)=0$.
\end{enumerate}
\end{definition}

By the comparison principle, Lemma \ref{lem2334sun} (b), it is straightforward to see that a point $x_0\in\partial\Omega$ is regular if and only if there exists a barrier at $x_0$.

The following result, which is the analogue of \cite[Lemma 4.2]{DKK25b}, shows that regularity is a local property.

\begin{lemma}	 \label{lem1629mon}
A point $x_0\in\partial\Omega$ is regular with respect to $\Omega$ if and only if there exists $r>0$ such that $x_0$ is regular with respect to $B_r(x_0)\cap\Omega$.
\end{lemma}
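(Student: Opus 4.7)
The plan is to use the barrier characterization of regular points stated immediately before the lemma: $x_0$ is regular for $\mathcal{D}$ if and only if there exists a barrier at $x_0$ with respect to $\mathcal{D}$. This reduces the statement to the equivalence of the existence of a barrier at $x_0$ with respect to $\Omega$ and the existence of a barrier at $x_0$ with respect to $\Omega_r := B_r(x_0)\cap\Omega$ for some $r>0$. The forward direction will be handled by direct restriction; the reverse direction will require a gluing construction using the positive supersolution $W$ from Lemma \ref{lem02}.

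For the forward direction, I would take a barrier $w$ on $\Omega$ and restrict it to $\Omega_r$ for any $r>0$. The comparison principle and the strong minimum principle (Lemma \ref{lem2334sun}(a),(b)) force $w>0$ throughout $\Omega$. Conditions (i) and (iii) in the definition of a barrier pass to the restriction immediately. The delicate part is (ii): decompose $\partial \Omega_r = (\partial \Omega \cap \overline{B_r(x_0)}) \cup (\partial B_r(x_0) \cap \overline{\Omega})$. On the first piece, I would use the barrier property of $w$ for $\Omega$ directly. For the second piece, which is compact and stays at distance $r$ from $x_0$, I would argue that the lower semicontinuous function $w^*(x) := \liminf_{y\to x,\, y\in\Omega} w(y)$ is strictly positive everywhere on it, combining $w>0$ inside $\Omega$ with the barrier bound from $w$ at points of $\partial B_r(x_0)\cap\partial\Omega$, and then invoke compactness to extract a uniform positive lower bound.

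For the reverse direction, given a barrier $w$ on $\Omega_r$, the idea is to paste $w$ with a small multiple of $W$ to produce a barrier on $\Omega$. Choose $r'\in(0,r)$ and let $\epsilon>0$ be such that $\liminf w\ge\epsilon$ on $\partial \Omega_r\setminus B_{r'}(x_0)$, which in particular controls $w$ on the interface $\partial B_r(x_0)\cap\overline{\Omega}$. Setting $\sigma := \epsilon\gamma_0$, the uniform bound $W\le\gamma_0^{-1}$ gives $\sigma W\le\epsilon$ throughout $\Omega$. The candidate barrier is
\[
\tilde w(x) := \begin{cases} \min\!\bigl(w(x),\sigma W(x)\bigr) & x\in \Omega_r, \\ \sigma W(x) & x\in \Omega\setminus \Omega_r. \end{cases}
\]
The hard part will be verifying the lower semicontinuity hypothesis of the pasting lemma (Lemma \ref{lem2334sun}(e)) along the interface $\partial B_r(x_0)\cap\Omega$: at such an $x$ one has $\tilde w(x)=\sigma W(x)\le\epsilon$, and approaching from inside $\Omega_r$ the lim inf of $\min(w,\sigma W)$ is at least $\min(\epsilon,\sigma W(x))=\sigma W(x)$, which is exactly what is needed. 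This is precisely the role of the calibration $\sigma=\epsilon\gamma_0$. Once $\tilde w\in\mathfrak{S}^+(\Omega)$ is established, condition (iii) is inherited from $w$ since $\tilde w\le w$ in $\Omega_r$, and condition (ii) follows by splitting $\partial\Omega\setminus B_\delta(x_0)$ into its intersection with $B_r(x_0)$ (controlled by the barrier property of $w$ on $\Omega_r$) and its complement (where $\tilde w=\sigma W\ge \sigma\gamma_0>0$).
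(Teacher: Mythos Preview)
Your proposal is correct and follows essentially the same approach as the paper: restriction for the forward direction, and gluing the local barrier with a suitable multiple of $W$ via the pasting lemma for the reverse direction. The only minor difference is that the paper glues along $\partial B_{r/2}(x_0)$ using $m := \min_{\partial B_{r/2}\cap\Omega}(w/W)$ (positive since $w>0$ on this set, which lies inside $\Omega_r$), whereas you glue along $\partial B_r(x_0)$ and calibrate the constant $\sigma=\epsilon\gamma_0$ through barrier property (ii); both choices make the lower semicontinuity check in Lemma~\ref{lem2334sun}(e) go through.
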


\begin{proof}
Suppose first that $x_0$ is regular with respect to $\Omega$.
Then there exists a barrier $w$ at $x_0$ with respect to $\Omega$.
For any $r>0$, the restriction of $w$ to $B_r(x_0)\cap\Omega$ is clearly a barrier at $x_0$ with respect to $B_r(x_0)\cap\Omega$.

Conversely, suppose that there exists a barrier $w$ at $x_0$ with respect to $B_r(x_0)\cap\Omega$.
By the barrier property, there exists $\epsilon>0$ such that
\[
w\ge \epsilon \quad \text{on } \partial (B_r(x_0)\cap\Omega) \setminus B_{r/2}(x_0).
\]
In particular, $w\ge\epsilon$ on $\partial B_{r/2}(x_0)\cap\Omega$.
Let $W$ be the function from Lemma \ref{lem02}.
We may choose $m>0$ so small that
\[
mW\le w \quad \text{on } \partial B_{r/2}(x_0)\cap\Omega.
\]
Define
\[
\tilde w=
\begin{cases}
\min(w,mW) & \text{in } B_{r/2}(x_0)\cap\Omega,\\
mW & \text{in } \Omega\setminus B_{r/2}(x_0).
\end{cases}
\]
Then $\tilde w$ is a barrier at $x_0$ with respect to $\Omega$.
\end{proof}

\subsection{Green function in a ball}			
In \cite[Theorem 4.1]{DKK25a} and \cite[Theorem 3.3]{DKK25b}, we constructed the Green function $G(x,y)$ for the operator $L$ in a ball in $\mathbb{R}^d$ for $d\ge 3$ and $d=2$, respectively.
In both cases, it was shown that
\begin{equation}			\label{symmetry}
G^*(x,y)=G(y,x)
\end{equation}
is the Green function for the adjoint operator $L^*$ in the same ball.
The following theorems are immediate consequences of \eqref{symmetry}, together with \cite[Theorem 4.13]{DKK25a} in the case $d\ge3$ and \cite[Theorem 3.15]{DKK25b} in the case $d=2$.

\medskip
Recall that $\mathcal{B}$ and $\mathcal{B}'$ are concentric balls in $\mathbb{R}^d$ as defined in \eqref{eq_balls}.

\begin{theorem}[$d \ge 3$]		\label{thm_green_function}
Assume that Conditions \ref{cond1} and \ref{cond2} hold.
Let $G^*(x, y)$ be the Green function for the operator $L^*$ in $\mathcal{B} \subset \mathbb{R}^d$, where $d\ge 3$.
There exists a constant $N_0>0$, depending only on $d$, $\lambda$, $\Lambda$, $\omega_{\mathbf A}$, and $R_0$, such that
\[
N_0^{-1} \abs{x-y}^{2-d} \le G^*(x,y)\le N_0 \abs{x-y}^{2-d}
\]
for all $x,y\in\overline{\mathcal B'}$ with $x\ne y$.
\end{theorem}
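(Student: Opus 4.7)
The plan is to reduce the two-sided bound for $G^*(x,y)$ to the corresponding bound for $G(x,y)$ already established in \cite[Theorem 4.13]{DKK25a}. The paper has already noted the symmetry identity $G^*(x,y)=G(y,x)$, so the entire content of the statement is essentially a change of variables in this identity; no new analytic estimate is required.

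More concretely, I would first cite the construction of the Green's function in \cite[Theorem 4.1]{DKK25a}, which (i) produces $G(\cdot,\cdot)$ for the non-divergence operator $L$ in the ball $\mathcal{B}$, and (ii) verifies that the function $(x,y)\mapsto G(y,x)$ satisfies the defining properties of the Green's function for $L^*$, i.e., $G^*(x,y)=G(y,x)$. This identity is what makes the reduction possible. Next, I would apply \cite[Theorem 4.13]{DKK25a}, which, under Conditions \ref{cond1} and \ref{cond2}, yields a constant $N_0>0$, depending only on $d$, $\lambda$, $\Lambda$, $\omega_{\mathbf A}$ and $R_0$, such that
\[
N_0^{-1}\,|z-w|^{2-d}\le G(z,w)\le N_0\,|z-w|^{2-d}\qquad \text{for all } z\ne w\in\overline{\mathcal{B}'}.
\]

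Setting $z=y$ and $w=x$, the symmetry of the Euclidean norm $|y-x|=|x-y|$ together with $G^*(x,y)=G(y,x)$ immediately gives
\[
N_0^{-1}\,|x-y|^{2-d}\le G^*(x,y)\le N_0\,|x-y|^{2-d}\qquad \text{for all } x\ne y\in\overline{\mathcal{B}'},
\]
which is exactly the claimed estimate. Since $\overline{\mathcal{B}'}$ is symmetric in the roles of $x$ and $y$, no loss is incurred in the swap, and the constant $N_0$ is inherited unchanged.

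There is, in principle, no analytical obstacle at this level: the entire difficulty has been absorbed into the earlier works, particularly into the proof that $G(y,x)$ genuinely represents the Green's function for $L^*$ and into the pointwise estimates for $G$ itself, both of which rest on the Dini mean oscillation assumption on $\mathbf A$ and on the $L^{p_0}$-type conditions on the lower-order coefficients. Were one to reproduce the argument from scratch, the delicate step would be matching the weak formulation for $L^*$ (Definition \ref{def01}) with the strong solvability theory for $L$ via duality in order to identify $G^*(x,\cdot)$ with $G(\cdot,x)$; but since this identification has already been carried out, the present theorem is a direct corollary.
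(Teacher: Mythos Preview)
Your proposal is correct and matches the paper's approach exactly: the paper states that the theorem is an immediate consequence of the symmetry identity $G^*(x,y)=G(y,x)$ together with \cite[Theorem 4.13]{DKK25a}, and gives no further proof.
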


\begin{theorem}[$d=2$]	\label{thm_green_function2d}
Assume that Conditions \ref{cond1} and \ref{cond2} hold.
Let $B_{4r}=B_{4r}(x_0) \subset \mathcal{B} \subset \mathbb{R}^2$, and let $G^*(x, y)$ be the Green function for $L^*$ in $B_{4r}$.
Then there exists a constant $C_0>1$, depending only on $\lambda$, $\Lambda$, $\omega_{\mathbf A}$, and $R_0$, such that
\begin{equation}		\label{estimate_green}
\frac{1}{C_0} \log \left(\frac{3r}{\abs{x-y}}\right) \le G^*(x,y)\le C_0 \log \left(\frac{3r}{\abs{x-y}}\right),\qquad x, y\in\overline B_r,\quad x\ne y.
\end{equation}
\end{theorem}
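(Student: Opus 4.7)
The plan is to reduce the claim to the two-sided bounds already established for the Green's function $G(x,y)$ of the non-divergence operator $L$ in the same ball, and then transfer them to $G^*$ via the symmetry identity \eqref{symmetry}. Since the theorem is explicitly framed as an immediate consequence of \cite[Theorem 3.3]{DKK25b} and \cite[Theorem 3.15]{DKK25b}, there is no new analytic construction to perform; the proof is essentially a one-line invocation of cited results together with a symmetry observation.

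Concretely, I would first verify that Conditions \ref{cond1} and \ref{cond2} place us in exactly the setting of \cite[Theorems 3.3 and 3.15]{DKK25b}: the ball $B_{4r} \subset \mathcal{B}$ lies inside the fixed region on which the Dini mean oscillation condition on $\mathbf{A}$ and the $L^{p_0}$, $L^{p_0/2}$ integrability of $\vec b$ and $c$ are assumed, and the sign condition $c \le 0$ is in force. Consequently \cite[Theorem 3.3]{DKK25b} provides the Green's function $G(x,y)$ for $L$ in $B_{4r}$ together with the symmetry identity \eqref{symmetry}, namely $G^*(x,y) = G(y,x)$.

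Next, I would apply \cite[Theorem 3.15]{DKK25b} to obtain a constant $C_0 > 1$ depending only on $\lambda$, $\Lambda$, $\omega_{\mathbf A}$, and $R_0$, such that for all $x \neq y \in \overline{B}_r$,
\[
\frac{1}{C_0} \log\!\left(\frac{3r}{\abs{y-x}}\right) \le G(y,x) \le C_0 \log\!\left(\frac{3r}{\abs{y-x}}\right).
\]
Since $\abs{y-x} = \abs{x-y}$, substituting $G(y,x) = G^*(x,y)$ into this double inequality yields exactly the desired bound \eqref{estimate_green}.

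I do not anticipate a genuine obstacle here; the only point requiring care is that the parameters $C_0$ and the ball in which the estimate holds must be the ones produced by \cite[Theorem 3.15]{DKK25b}, and that both the lower and upper bounds transfer simultaneously under the symmetry. All the substantive work — the construction of the Green's function in two dimensions, the proof of symmetry, and the derivation of the logarithmic bilateral bounds — has been carried out in the cited references, and the present statement is a direct restatement of those results for the adjoint operator.
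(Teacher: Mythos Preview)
Your proposal is correct and matches the paper's own treatment exactly: the paper states that Theorem~\ref{thm_green_function2d} is an immediate consequence of the symmetry identity \eqref{symmetry} and \cite[Theorem 3.15]{DKK25b}, and provides no further argument. Your invocation of the cited bounds for $G(y,x)$ together with $G^*(x,y)=G(y,x)$ is precisely what the paper intends.
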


\section{Potential and Capacity}		\label{sec4}
We note that, in what follows, the function $W$ introduced in Lemma \ref{lem02} plays the role of the constant function $1$, in analogy with the corresponding definitions and results in \cite{DKK25a}. 
In analogy with the notion of potential for $L$ used in \cite{DKK25a}, we call a nonnegative $L^*$-supersolution in $\mathcal B$ an $L^*$-potential, or simply a potential, if it is finite at every point of $\mathcal B$ and vanishes continuously on $\partial\mathcal B$.

The following definition is the analogue of \cite[Definition 5.5]{DKK25a}, with the function $W$ replacing the constant function $1$.

\begin{definition} \label{def_rfn}
For $E\Subset\mathcal B$, define
\[
u_E(x):= \inf\left\{ v(x): v\in\mathfrak S^+(\mathcal B),\ v\ge0 \ \text{in } \mathcal B,\ v\ge W \ \text{on } E \right\}, \qquad x\in\mathcal B.
\]
The lower semicontinuous regularization
\[
\hat u_E(x)=\sup_{r>0} \left(\inf_{\mathcal B\cap B_r(x)} u_E\right)
\]
is called the $L^*$-capacitary potential, or simply the capacitary potential, of $E$.
\end{definition}

\begin{lemma}				\label{lem1013sat}
The following properties hold:
\begin{enumerate}[leftmargin=*]
\item
$0\le \hat u_E \le u_E \le W$.
\item
$u_E=W$ on $E$, and $\hat u_E =W$ in $\operatorname{int}(E)$.
\item
$\hat u_E$ is an $L^*$-potential.
\item
$u_E=\hat u_E$ in $\mathcal{B}\setminus \overline{E}$, and
$L^* u_E=L^* \hat u_E=0$ in $\mathcal{B}\setminus \overline{E}$.
\item
If $x_0\in \partial E$, then
\[
\liminf_{x\to x_0} \hat u_{E}(x) =\liminf_{x\to x_0,\; x\in \mathcal{B}\setminus  E} \hat u_E(x).
\]
\end{enumerate}
\end{lemma}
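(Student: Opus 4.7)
The plan is to prove (a), (b), (d), (c), (e) in that order, since (d) is needed to recognize $\hat u_E$ as a genuine solution off $\overline{E}$ when establishing parts (c) and (e). For items (a) and (b), everything follows from the observation that $W$ itself belongs to the admissible class $\mathscr F_E := \{v \in \mathfrak{S}^+(\mathcal{B}): v \ge 0 \text{ in } \mathcal{B},\ v \ge W \text{ in } E\}$, because $W$ is an $L^*$-solution hence $L^*$-supersolution, is nonnegative, and trivially equals $W$ on $E$. This gives $u_E \le W$; conversely every admissible $v$ dominates $W$ on $E$, so $u_E = W$ on $E$. The bound $\hat u_E \le u_E$ is immediate from the definition of the lower semicontinuous regularization, and on $\intr(E)$ the continuity of $W$ and the identity $u_E = W$ on $E$ yield $\hat u_E = W$.

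For (d), I would apply Lemma \ref{lem1034thu} to $\mathscr F_E$ viewed as a family of $L^*$-supersolutions on the open set $\mathcal{B}\setminus \overline{E}$. Closure under $\min$ is clear, and for a ball $B \Subset \mathcal{B}\setminus \overline{E}$ the pasting lemma (Lemma \ref{lem2334sun}(e)) shows that the globally defined function equal to $\mathscr E_B v$ on $B$ and to $v$ elsewhere still lies in $\mathscr F_E$, since $B \cap E = \emptyset$. Hence $\mathscr F_E|_{\mathcal{B}\setminus \overline{E}}$ is saturated; because $u_E \ge 0$, Lemma \ref{lem1034thu} gives $L^* u_E = 0$ on $\mathcal{B}\setminus \overline{E}$, and in particular $u_E$ is continuous there, forcing $\hat u_E = u_E$ on that set.

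The main obstacle is part (c). Lower semicontinuity, nonnegativity, and the bound $\hat u_E \le W \le \gamma_0^{-1}$ are built in, so it remains to verify the submean inequality and continuous vanishing on $\partial \mathcal{B}$. For the submean inequality on a ball $B \Subset \mathcal{B}$, every admissible $v$ satisfies $v(x) \ge \int_{\partial B} v\, d\omega_B^x \ge \int_{\partial B} \hat u_E\, d\omega_B^x =: h(x)$; the function $h$ is $L^*$-harmonic in $B$ (hence continuous), and taking the infimum over $v$ followed by the lsc regularization yields $\hat u_E \ge h$ on $B$, which is the desired inequality. For continuous vanishing on $\partial \mathcal{B}$ the idea is to exhibit a single admissible $v_0 \in \mathscr F_E$ that already vanishes continuously there. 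Fixing a point $y_0 \in E$ and choosing $M$ large enough that $M G^*(\cdot, y_0) \ge W$ on the compact set $\overline{E}$, which is possible by the two-sided Green's function bounds of Theorem \ref{thm_green_function} (or Theorem \ref{thm_green_function2d} when $d=2$) together with $W \le \gamma_0^{-1}$, the function $v_0 := M G^*(\cdot, y_0)$ is a member of $\mathfrak{S}^+(\mathcal{B})$ by the standard supersolution property of the Green's function and vanishes continuously on $\partial \mathcal{B}$; this sandwiches $0 \le \hat u_E \le u_E \le v_0 \to 0$ on $\partial \mathcal{B}$.

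Finally, for (e), one inequality is automatic, since enlarging the set of allowed approaches can only decrease the liminf. For the reverse, set $A := \liminf_{x \to x_0,\, x \in \mathcal{B}\setminus E} \hat u_E(x)$. Given $\epsilon > 0$, choose $\delta$ so small that $\hat u_E(y) \ge A - \epsilon$ for $y \in \mathcal{B}\setminus E$ with $0 < |y - x_0| < \delta$ and, using $\hat u_E \le W$ together with the continuity of $W$ (which forces $W(x_0) \ge A$), so that $W(y) \ge A - \epsilon$ on $B_\delta(x_0)$. The formula $\hat u_E(x) = \sup_{r > 0} \inf_{B_r(x)\cap \mathcal{B}} u_E$, combined with the identities $u_E = W$ on $E$ and $u_E = \hat u_E$ on $\mathcal{B}\setminus \overline{E}$ from (b) and (d), then gives $\hat u_E(x) \ge A - \epsilon$ for every $x \in B_\delta(x_0) \cap \mathcal{B}$, completing the argument.
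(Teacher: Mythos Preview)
Your argument is correct and follows the standard potential-theoretic route that the paper has in mind; the paper itself gives no proof, deferring entirely to \cite[Lemma~5.6]{DKK25a} with the remark that $W$ replaces the constant function~$1$, and your outline is precisely that argument carried out in this setting.

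A few small points could be stated more carefully. In part (c), the function $h(x)=\int_{\partial B}\hat u_E\,d\omega_B^x$ is not immediately known to be an $L^*$-solution since $\hat u_E|_{\partial B}$ is only lower semicontinuous; the clean fix is to approximate from below by continuous $\phi_n\uparrow\hat u_E$ on $\partial B$, obtain $u_E\ge h_n:=\int\phi_n\,d\omega_B^x$ with $h_n$ continuous, pass to $\hat u_E\ge h_n$, and let $n\to\infty$. For the boundary vanishing, note that Theorems~\ref{thm_green_function} and~\ref{thm_green_function2d} give the two-sided bounds only on $\overline{\mathcal{B}'}$, while $E\Subset\mathcal{B}$ is general; however, all you actually need is that $G^*(\cdot,y_0)$ is continuous and strictly positive on $\mathcal{B}\setminus\{y_0\}$, so it has a positive infimum on the compact set $\overline{E}$, and your barrier $v_0=MG^*(\cdot,y_0)$ works without the quantitative estimates. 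Finally, in (e) your decomposition into $E$ and $\mathcal{B}\setminus\overline{E}$ omits points $z\in\partial E\setminus E$; for those one simply uses $u_E(z)\ge\hat u_E(z)\ge A-\epsilon$, since such $z$ lie in $\mathcal{B}\setminus E$ and your choice of $\delta$ already controls $\hat u_E$ there.
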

\begin{proof}
The proof is the same as that of \cite[Lemma 5.6]{DKK25a}, with the function $W$ replacing the constant function $1$.
\end{proof}

The following lemma is the analogue of \cite[Lemma 5.7]{DKK25a}.
Its proof is unchanged.

\begin{lemma}			\label{lem0800sun}
Let $u$ be a nonnegative $L^*$-supersolution in $\mathcal B$, and assume that $u<+\infty$ in $\mathcal B$.
For an open set $\mathcal D\subset\mathcal B$, define
\[
\mathscr{F}_{\mathcal D}=\{v \in \mathfrak{S}^+(\mathcal{B}):  v\ge 0\,\text{ in }\,\mathcal{B},\; v - u \in \mathfrak{S}^+(\mathcal{D})\}.
\]
For $x\in\mathcal B$, define
\[
\mathbb{P}_\mathcal{D} u(x)=\inf_{v \in \mathscr{F}_{\mathcal D}} v(x).
\]
Then the following properties hold:
\begin{enumerate}[leftmargin=*]
\item
$\mathbb{P}_\mathcal{D} u \in \mathfrak{S}^+(\mathcal{B})$ and $0 \le \mathbb{P}_\mathcal{D} u \le u$ in $\mathcal{B}$.
\item
$\mathbb{P}_\mathcal{D}u \in \mathscr{F}_{\mathcal D}$;  equivalently, $\mathbb{P}_\mathcal{D} u - u$ is an $L^*$-supersolution in $\mathcal{D}$.
\item
$\mathbb{P}_\mathcal{D} u$ is an $L^*$-solution in $\mathcal{B}\setminus \overline{\mathcal{D}}$.
\item
If $v \in \mathscr{F}_{\mathcal{D}}$, then $v-\mathbb{P}_\mathcal{D} u$ is a nonnegative $L^*$-supersolution in $\mathcal{B}$.
\end{enumerate}
\end{lemma}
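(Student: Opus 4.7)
The family $\mathscr{F}_{\mathcal{D}}$ is nonempty since $u$ itself lies in it: $u \ge 0$ on $\mathcal{B}$ and $u - u \equiv 0$ is trivially an $L^*$-supersolution on $\mathcal{D}$. This immediately gives $0 \le \mathbb{P}_{\mathcal{D}} u \le u$ on $\mathcal{B}$, supplying the quantitative bound in part (a). For the remaining assertions I would follow the balayage-style strategy of \cite[Lemma 5.7]{DKK25a}, the plan being to establish that $\mathscr{F}_{\mathcal{D}}$ is a saturated family and then to recover $\mathbb{P}_{\mathcal{D}} u$ as the lower-semicontinuously regularized infimum of a countable decreasing subfamily.

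The first technical step is to verify two closure properties of $\mathscr{F}_{\mathcal{D}}$. Closure under binary minima follows at once from Lemma \ref{lem2334sun}(d) together with the identity $\min(v_1, v_2) - u = \min(v_1 - u, v_2 - u)$. Closure under the lowering operation $\mathscr{E}_B$ for balls $B \Subset \mathcal{B}$ is more delicate and I would argue it by cases. When $B \cap \overline{\mathcal{D}} = \emptyset$, $\mathscr{E}_B v$ agrees with $v$ on $\mathcal{D}$ and the property is inherited. When $B \Subset \mathcal{D}$, the decomposition
\[
\mathscr{E}_B v - u = (\mathscr{E}_B v - \mathscr{E}_B u) + (\mathscr{E}_B u - u)
\]
exhibits $\mathscr{E}_B v - u$ as the sum of an $L^*$-solution on $B$ and a global $L^*$-supersolution (the latter by Lemma \ref{lem1033thu}(a)--(b)). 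The remaining case, in which $B$ crosses $\partial \mathcal{D}$, requires a direct verification of the averaging inequality on sub-balls $B' \Subset \mathcal{D} \cap B$ after splitting the harmonic measure $\omega^{x}_{B'}$ according to whether its mass lies in $\overline{\mathcal{D}}$ or not.

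Once saturation is in hand, Lemma \ref{lem1034thu} combined with a Choquet-type extraction produces a countable decreasing sequence $v_n \in \mathscr{F}_{\mathcal{D}}$ whose pointwise limit $v^{*}$ coincides with $\mathbb{P}_{\mathcal{D}} u$ off a polar set; monotone passage to the limit in the averaging inequality shows $v^{*}$ is an $L^*$-supersolution, and its lower semicontinuous regularization yields $\mathbb{P}_{\mathcal{D}} u \in \mathscr{F}_{\mathcal{D}}$, establishing parts (a) and (b) together. Part (c) then follows from closure under $\mathscr{E}_B$ for $B \Subset \mathcal{B} \setminus \overline{\mathcal{D}}$: this forces $\mathbb{P}_{\mathcal{D}} u(x) = \int \mathbb{P}_{\mathcal{D}} u \, d\omega^{x}_{B}$ on every such $B$, so $\mathbb{P}_{\mathcal{D}} u$ is an $L^*$-solution on $\mathcal{B} \setminus \overline{\mathcal{D}}$. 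Part (d) is a consequence of minimality: for $v \in \mathscr{F}_{\mathcal{D}}$, a saturation argument applied to the translated family $\{v - w : w \in \mathscr{F}_{\mathcal{D}},\, w \le v\}$ identifies $v - \mathbb{P}_{\mathcal{D}} u$ as a nonnegative $L^*$-supersolution on $\mathcal{B}$.

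The principal obstacle, as I see it, is the closure of $\mathscr{F}_{\mathcal{D}}$ under $\mathscr{E}_B$ when $B$ straddles $\partial \mathcal{D}$: neither of the two clean cases above applies, and one must argue directly with the mean-value inequality while keeping track of the fact that $u$ is only lower semicontinuous a priori, so that $v - u$ is not automatically lower semicontinuous. A secondary technical point is the Choquet-type regularization itself: since $\mathscr{F}_{\mathcal{D}}$ is in general uncountable, its pointwise infimum need not be l.s.c., and the identification with its l.s.c.\ envelope relies on the polar-set negligibility machinery developed in \cite{DKK25a}.
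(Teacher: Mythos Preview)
Your overall plan---reduce to \cite[Lemma~5.7]{DKK25a} via closure under $\min$, a Choquet extraction, and monotone limits---is the right one and matches the paper. But the claim that $\mathscr{F}_{\mathcal{D}}$ is closed under $\mathscr{E}_B$ for \emph{all} balls $B\Subset\mathcal{B}$ is false, and your argument for the case $B\Subset\mathcal{D}$ proves the opposite of what you want. In the decomposition $\mathscr{E}_B v-u=(\mathscr{E}_B v-\mathscr{E}_B u)+(\mathscr{E}_B u-u)$, the second term is \emph{not} an $L^*$-supersolution: on $B$, $\mathscr{E}_B u$ is an $L^*$-solution and $u$ is an $L^*$-supersolution, so $\mathscr{E}_B u-u$ is an $L^*$-\emph{sub}solution there. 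Lemma~\ref{lem1033thu}(a)--(b) says only that $\mathscr{E}_B u\le u$ and $\mathscr{E}_B u\in\mathfrak{S}^+(\mathcal{B})$; neither yields $\mathscr{E}_B u-u\in\mathfrak{S}^+$. In fact, full saturation of $\mathscr{F}_{\mathcal{D}}$ in $\mathcal{B}$ cannot hold in general: if it did, Lemma~\ref{lem1034thu} would force $\mathbb{P}_{\mathcal{D}}u$ to be an $L^*$-\emph{solution} throughout $\mathcal{B}$, and combining this with part~(b) and the remark following the lemma (that $u-\mathbb{P}_{\mathcal{D}}u$ is an $L^*$-solution in $\mathcal{D}$) would make $u$ itself an $L^*$-solution in $\mathcal{D}$, contradicting the hypothesis for a generic supersolution.

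The repair is to decouple the two mechanisms you have already written down. For parts~(a) and~(b), closure under $\min$ alone (no lowering needed) plus the Choquet extraction and monotone convergence in the averaging inequality already give that the regularized infimum lies in $\mathfrak{S}^+(\mathcal{B})$ and in $\mathscr{F}_{\mathcal{D}}$. Saturation enters only for part~(c), and only in $\mathcal{B}\setminus\overline{\mathcal{D}}$, where $\mathscr{E}_B$-closure is trivial because $\mathscr{E}_B v=v$ on $\mathcal{D}$ whenever $B\cap\overline{\mathcal{D}}=\emptyset$; the ``straddling'' case you flag as the principal obstacle simply never arises. Part~(d) then requires its own argument (it does not follow from saturation of any translated family, for the same reason as above); it is handled in \cite{DKK25a} by a direct comparison showing that for each $B\Subset\mathcal{B}$ one can modify any competitor by the defect $v-\mathscr{E}_B v$ without leaving $\mathscr{F}_{\mathcal{D}}$.
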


It follows from Lemma \ref{lem0800sun} (a) that $\mathbb{P}_\mathcal{D} u$ is an $L^*$-potential if $u$ is.
Moreover, since $u \in \mathscr{F}_{\mathcal{D}}$, Lemma \ref{lem0800sun} (d) implies that $u-\mathbb{P}_\mathcal{D} u$ is a nonnegative $L^*$-supersolution in $\mathcal{B}$.
Combining this with Lemma \ref{lem0800sun} (b), we conclude that $u-\mathbb{P}_\mathcal{D} u$ is a nonnegative $L^*$-solution in $\mathcal{D}$.

The following proposition is the analogue of \cite[Proposition 5.9]{DKK25a}.
The proof is unchanged from that of \cite[Proposition 5.9]{DKK25a}.

\begin{proposition}			\label{prop0800tue}
Let $u$ be a nonnegative $L^*$-supersolution in $\mathcal B$, and assume that $u<+\infty$ in $\mathcal B$.
Then, for each $x\in\mathcal B$, there exists a Radon measure $\mu^x$ on $\mathcal B$ such that
\[
\mu^x(\mathcal{D})=\mathbb{P}_{\mathcal{D}} u(x)
\]
for every open set $\mathcal D\subset\mathcal B$.
\end{proposition}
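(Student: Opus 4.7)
The plan is to fix $x \in \mathcal{B}$, define the set function $\Phi_x(\mathcal{D}) := \mathbb{P}_{\mathcal{D}}u(x)$ on the collection of open subsets of $\mathcal{B}$, verify that $\Phi_x$ is monotone, finitely additive on disjoint open sets, and continuous from below along increasing unions, and then invoke the standard Carath\'eodory extension theorem (applied to the lattice of open subsets of $\mathcal{B}$, which generates the Borel $\sigma$-algebra) to produce the Radon measure $\mu^x$. The argument parallels \cite[Proposition 5.9]{DKK25a}, with the auxiliary function $W$ from Lemma \ref{lem02} taking the role of the constant function $1$.

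The easy properties come first. Monotonicity $\Phi_x(\mathcal{D}_1) \le \Phi_x(\mathcal{D}_2)$ for $\mathcal{D}_1 \subset \mathcal{D}_2$ is immediate from the inclusion $\mathscr{F}_{\mathcal{D}_2} \subset \mathscr{F}_{\mathcal{D}_1}$, since a supersolution on $\mathcal{D}_2$ restricts to one on $\mathcal{D}_1$. The upper bound
\[
\Phi_x(\mathcal{D}_1 \cup \mathcal{D}_2) \le \Phi_x(\mathcal{D}_1) + \Phi_x(\mathcal{D}_2),\quad \mathcal{D}_1 \cap \mathcal{D}_2 = \emptyset,
\]
is obtained by checking that $\mathbb{P}_{\mathcal{D}_1}u + \mathbb{P}_{\mathcal{D}_2}u$ is admissible for $\Phi_x(\mathcal{D}_1 \cup \mathcal{D}_2)$: on $\mathcal{D}_1$, its excess over $u$ is $(\mathbb{P}_{\mathcal{D}_1}u - u) + \mathbb{P}_{\mathcal{D}_2}u$, which by Lemma \ref{lem0800sun}(b) together with Lemma \ref{lem2334sun}(d) is a sum of $L^*$-supersolutions on $\mathcal{D}_1$; the situation on $\mathcal{D}_2$ is symmetric by disjointness.

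The main obstacle is the reverse inequality $\Phi_x(\mathcal{D}_1) + \Phi_x(\mathcal{D}_2) \le \Phi_x(\mathcal{D}_1 \cup \mathcal{D}_2)$. The strategy, modeled on \cite[Proposition 5.9]{DKK25a}, is to take an $\epsilon$-near minimizer $v \in \mathscr{F}_{\mathcal{D}_1 \cup \mathcal{D}_2}$ and to paste it, via the pasting lemma (Lemma \ref{lem2334sun}(e)), against the auxiliary functions $\mathbb{P}_{\mathcal{D}_i}u$ and $u - \mathbb{P}_{\mathcal{D}_1 \cup \mathcal{D}_2}u$; the latter, by Lemma \ref{lem0800sun}(d) together with the remark after that lemma, is a nonnegative $L^*$-solution on $\mathcal{D}_1 \cup \mathcal{D}_2$ and a nonnegative $L^*$-supersolution on $\mathcal{B}$. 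The disjointness of $\mathcal{D}_1$ and $\mathcal{D}_2$ allows one to split the $(v-u)$-supersolution on $\mathcal{D}_1 \cup \mathcal{D}_2$ into two independent supersolution contributions on the respective $\mathcal{D}_i$, yielding admissible competitors $v_i \in \mathscr{F}_{\mathcal{D}_i}$ with $v_1(x) + v_2(x) \le v(x) + O(\epsilon)$; passing to $\epsilon \to 0$ gives the inequality.

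Countable additivity then follows from finite additivity together with continuity from below. For an increasing sequence $\mathcal{D}_n \uparrow \mathcal{D}$ of open sets, the functions $\mathbb{P}_{\mathcal{D}_n}u$ form a nondecreasing family of nonnegative $L^*$-supersolutions on $\mathcal{B}$ bounded by $u$; their pointwise limit $v_\infty$ is itself an $L^*$-supersolution on $\mathcal{B}$ by a monotone-convergence argument using the harmonic measures $\omega^x_B$ on compactly contained balls. Moreover, every ball $B \Subset \mathcal{D}$ eventually lies in some $\mathcal{D}_n$, so $v_\infty - u$ is an $L^*$-supersolution on $\mathcal{D}$; hence $v_\infty \in \mathscr{F}_\mathcal{D}$, giving $\Phi_x(\mathcal{D}) \le v_\infty(x) = \lim_n \Phi_x(\mathcal{D}_n)$, while the reverse inequality is immediate from monotonicity. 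The Carath\'eodory extension theorem then delivers a unique Radon measure $\mu^x$ on $\mathcal{B}$ with $\mu^x(\mathcal{D}) = \Phi_x(\mathcal{D})$ for every open $\mathcal{D}$; local finiteness is automatic from $\mu^x(\mathcal{B}) = \Phi_x(\mathcal{B}) \le u(x) < \infty$.
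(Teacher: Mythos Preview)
Your approach is the same as the paper's, which gives no proof and simply records the proposition as a counterpart of \cite[Proposition~5.9]{DKK25a}; your sketch is a reasonable expansion of that reference. Two points deserve tightening, however.

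First, the superadditivity step is not actually carried out: the phrase ``paste $v$ against $\mathbb P_{\mathcal D_i}u$ and $u-\mathbb P_{\mathcal D_1\cup\mathcal D_2}u$'' does not specify a construction, and the $\epsilon$-near minimizer is unnecessary since $\mathbb P_{\mathcal D}u\in\mathscr F_{\mathcal D}$ by Lemma~\ref{lem0800sun}(b). A clean argument uses Lemma~\ref{lem0800sun}(d) directly: since $\mathbb P_{\mathcal D_1\cup\mathcal D_2}u\in\mathscr F_{\mathcal D_1}$, the function $w:=\mathbb P_{\mathcal D_1\cup\mathcal D_2}u-\mathbb P_{\mathcal D_1}u$ is a nonnegative $L^*$-supersolution in $\mathcal B$, and one checks $w\in\mathscr F_{\mathcal D_2}$ via Lemma~\ref{lem0800sun}(c); note the last step needs $\mathcal D_2\subset\mathcal B\setminus\overline{\mathcal D_1}$, so one first treats separated open sets and then reaches merely disjoint ones by inner exhaustion together with your continuity from below.

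Second, invoking ``the standard Carath\'eodory extension theorem'' on the lattice of open sets is a misnomer: open sets do not form a ring, and additivity on \emph{disjoint} open sets is not the hypothesis of that theorem. What you actually need is either modularity $\Phi_x(\mathcal D_1)+\Phi_x(\mathcal D_2)=\Phi_x(\mathcal D_1\cup\mathcal D_2)+\Phi_x(\mathcal D_1\cap\mathcal D_2)$ (which does hold here and is what \cite{DKK25a} proves), or else the Riesz--Markov style construction: pass to the inner content $\lambda_*(K)=\inf\{\Phi_x(U):K\subset U\text{ open}\}$ on compacts, verify additivity on disjoint compacts using your disjoint-open additivity plus normality of $\mathcal B$, and recover $\Phi_x$ on open sets by inner regularity via your continuity from below. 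Either route closes the argument, but the bare citation of Carath\'eodory does not.
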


\begin{theorem}	\label{thm_capacity_measure}
Let $K$ be a compact subset of $\mathcal{B}$.
There exists a Borel measure $\mu=\mu_K$, called the capacitary measure of $K$, supported on $\partial K$, such that
\[
\hat u_K(x)=\int_{K} G^*(x, y)\,d\mu(y), \qquad x \in \mathcal{B},
\]
where $G^*(x,y)$ is the Green function for $L^*$ in $\mathcal{B}$.

We define the $L^*$-capacity of $K$, or simply the capacity of $K$, by
\[
\capacity(K)=\mu(K).
\]
\end{theorem}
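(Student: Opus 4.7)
The plan is to construct $\mu$ as the Riesz measure associated to $\hat u_K$ through its distributional action of $L^*$, and then verify the Green's kernel representation by duality. Since $0\le\hat u_K\le W\le\gamma_0^{-1}$ by Lemmas~\ref{lem02} and~\ref{lem1013sat}(a), $\hat u_K$ lies in $L^\infty(\mathcal{B})$, so the functional
\[
T(\varphi):=-\int_{\mathcal{B}}\hat u_K\bigl(a^{ij}D_{ij}\varphi+b^iD_i\varphi+c\varphi\bigr)\,dx,\qquad \varphi\in C^\infty_c(\mathcal{B}),
\]
is well defined. I would first show that $T(\varphi)\ge 0$ whenever $\varphi\ge 0$, by combining the mean-value inequality $\hat u_K\ge\mathscr{E}_B\hat u_K$ of Lemma~\ref{lem1033thu}(a) with mollification and the $W^{2,p_0/2}$-solvability of $Lv=\varphi$ used in the proof of Theorem~\ref{thm01}. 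The Riesz representation theorem then furnishes a nonnegative Radon measure $\mu$ on $\mathcal{B}$ with $\int\varphi\,d\mu=T(\varphi)$.

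Next, I would localize $\supp\mu\subset\partial K$. By Lemma~\ref{lem1013sat}(b),(d) we have $\hat u_K=W$ on $\intr(K)$ and $L^*\hat u_K=0$ on $\mathcal{B}\setminus\overline K$, while $W$ solves $L^*W=0$ throughout $\mathcal{B}$ (Lemma~\ref{lem02}); hence $T(\varphi)=0$ for every $\varphi\in C^\infty_c(\mathcal{B}\setminus\partial K)$. Setting
\[
U(x):=\int_{\partial K}G^*(x,y)\,d\mu(y)=\int_{\partial K}G(y,x)\,d\mu(y),
\]
where the last equality uses the symmetry \eqref{symmetry}, I would combine Fubini with the Green's identity $\int G(y,x)L\varphi(y)\,dy=-\varphi(x)$ to obtain $L^*U=-\mu$ in the distributional sense. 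Consequently $L^*(\hat u_K-U)=0$ distributionally in $\mathcal{B}$, and the continuity result \cite[Theorem~1.8]{DEK18} produces a continuous classical $L^*$-solution representative for the difference. Since $\hat u_K$ is a potential (hence vanishes at $\partial\mathcal{B}$) and $U\to 0$ at $\partial\mathcal{B}$ by the decay of $G^*$ (Theorem~\ref{thm_green_function}) together with dominated convergence, Lemma~\ref{lem04} forces $\hat u_K\equiv U$ in $\mathcal{B}$.

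I expect the main obstacle to be the rigorous derivation of $L^*U=-\mu$: approximating $y\mapsto G(y,x)$ by $C^\infty_c(\mathcal{B})$ test functions demands cutoffs near both the singularity $y=x$ and $\partial\mathcal{B}$. The singularity is benign for $x\notin\supp\mu$, but the boundary cutoff generates error terms that must be controlled using the two-sided bounds of Theorem~\ref{thm_green_function} and the $W^{2,p_0/2}$-regularity of the auxiliary Dirichlet problem, all in the presence of the singular lower-order coefficients $\vec b\in L^{p_0}$ and $c\in L^{p_0/2}$. As a cleaner alternative, I would consider working directly with the family $\{\mu^x\}$ from Proposition~\ref{prop0800tue}: fixing a reference point $x_0\in\mathcal{B}\setminus\partial K$ and setting $d\mu(y):=G^*(x_0,y)^{-1}\,d\mu^{x_0}(y)$, which is well-defined since $G^*(x_0,\cdot)>0$ on $\partial K$ by Theorem~\ref{thm_green_function}, then verifying $\mu^x=G^*(x,\cdot)\mu$ by localizing to small balls around points of $\partial K$ and invoking the Harnack inequality of Theorem~\ref{thm02}.
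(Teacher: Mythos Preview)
Your closing alternative---take the family $\{\mu^x\}$ from Proposition~\ref{prop0800tue}, fix a reference point $x_0$, set $d\mu(y)=G^*(x_0,y)^{-1}\,d\mu^{x_0}(y)$, and verify $\mu^x=G^*(x,\cdot)\mu$ by localizing to small balls about points of $\partial K$ with Harnack---is precisely the paper's proof. The localization step is isolated as Lemma~\ref{lem40}: one shows that the normalized potentials $\mathbb{P}_{B_r(y)}\hat u_K(\cdot)/\mathbb{P}_{B_r(y)}\hat u_K(x_0)$ converge locally uniformly in $\mathcal{B}\setminus\{y\}$ to $G^*(\cdot,y)/G^*(x_0,y)$ as $r\to 0$, by extracting a limit $v$ which is a nonnegative $L^*$-solution in $\mathcal{B}\setminus\{y\}$ vanishing on $\partial\mathcal{B}$ with $v(x_0)=1$, and then identifying $v=kG^*(\cdot,y)$ via a supremum argument combined with Harnack on annuli $\partial B_r(y)$. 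This yields the Radon--Nikodym derivative $d\nu^x/d\nu^{x_0}$ at every $y$, hence $\mu$ independent of $x$, and the representation $\hat u_K(x)=\nu^x(\mathcal{B})=\int G^*(x,y)\,d\mu(y)$ holds \emph{pointwise} by construction.

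Your primary route through the distributional Riesz measure is a genuinely different strategy, and the positivity of $T$ and the support localization are fine as sketched. The gap is in the final identification. After $L^*(\hat u_K-U)=0$ distributionally you obtain a continuous representative, hence $\hat u_K=U+w$ \emph{almost everywhere}; but the theorem demands pointwise equality, including on $\partial K$ where $\hat u_K$ is only lower semicontinuous and, a priori, $U$ could equal $+\infty$ at an atom of $\mu$. Two lower semicontinuous supersolutions agreeing a.e.\ need not agree everywhere without a further argument (for instance, that both equal the $\liminf$ of their ball averages), and you have not supplied one. A second, smaller issue: Theorem~\ref{thm_green_function} gives two-sided bounds only for $x,y\in\overline{\mathcal{B}'}$, so it does not justify the boundary decay of $U$; you need the vanishing of $G^*(\cdot,y)$ on $\partial\mathcal{B}$ from its construction instead. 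The paper's $\mathbb{P}_{\mathcal{D}}$ approach bypasses both difficulties, since $\hat u_K(x)=\mathbb{P}_{\mathcal{B}}\hat u_K(x)=\nu^x(\mathcal{B})$ is pointwise and finite from the outset.
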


\begin{proof}
The proof is essentially the same as that of \cite[Theorem 5.14]{DKK25a}.
By Lemma \ref{lem1013sat}, $\hat u_K$ is an $L^*$-potential in $\mathcal{B}$.
Hence Proposition \ref{prop0800tue} implies that there exists a family of measures $\{\nu^x\}_{x\in \mathcal{B}}$ such that, for every open set $\mathcal{D} \subset \mathcal{B}$,
\[
\mathbb{P}_{\mathcal D}\hat u_K(x)=\nu^x(\mathcal D).
\]

We first show that $\nu^x$ is supported in $\partial K$.
Let $y_0\in \intr (K)$.
Choose $r>0$ such that $B_r(y_0) \Subset \intr(K)$.
By Lemma \ref{lem1013sat}, we have $L^* \hat u_K= L^*W=0$ in $B_r(y_0)$.
Since $v\equiv 0$ satisfies the condition in Lemma \ref{lem0800sun}(d), with $u=\hat{u}_K$, we obtain 
\[
0-\mathbb{P}_{B_r(y_0)}\hat u_K \ge 0.
\]
Because $\mathbb{P}_{B_r(y_0)}\hat u_K \ge 0$, it follows that $\mathbb{P}_{B_r(y_0)}\hat u_K=0$.
Similarly, if $y_0 \in \mathcal{B}\setminus K$, we choose $r>0$ such that  $B_r(y_0) \Subset \mathcal{B} \setminus K$.
By Lemma \ref{lem1013sat}, we have $L \hat u_K=0$ in $B_r(y_0)$, and the same argument yields $\mathbb{P}_{B_r(y_0)}\hat u_K=0$.
Therefore, if $y_0 \notin \partial K$, then there exists a ball $B_r(y_0)$ such that
\[
\nu^x(B_r(y_0))=0.
\]
Hence $\nu^x$ is supported on $\partial K$.

Let $x_0$, $y \in \mathcal{B}$ with $x_0 \neq y$.
Consider the family of balls $B_r(y)$, where $0<r<r_0$ and  $r_0:=\dist(y,\partial \mathcal{B})$.
By Lemma \ref{lem0800sun}, $\mathbb{P}_{B_r(y)}\hat{u}_{K}$ is a nonnegative $L^*$-supersolution in $\mathcal B$ that vanishes continuously on $\partial \mathcal{B}$.
Hence, by the strong minimum principle, either $\mathbb{P}_{B_r(y)}\hat{u}_{K}>0$ in $\mathcal{B}$ or $\mathbb P_{B_r(y)} \hat u_K\equiv 0$.
In the case $\mathbb{P}_{B_r(y)}\hat{u}_{K} \not\equiv 0$, we define
\[
v_r(x):=\frac{\mathbb{P}_{B_r(y)}\hat{u}_{K}(x)}{\mathbb{P}_{B_r(y)}\hat{u}_{K}(x_0)}.
\]

By Lemma \ref{lem40} below, if $\nu^{x_0}(B_r(y))>0$ for all $r \in (0,r_0)$, then
\[
\lim_{r\to 0} v_r(x)=\lim_{r\to 0}\frac{\mathbb{P}_{B_r(y)}\hat{u}_{K}(x)}{\mathbb{P}_{B_r(y)}\hat{u}_{K}(x_0)}=\frac{G^*(x,y)}{G^*(x_0,y)},\qquad  x \neq x_0.
\]
The same conclusion is trivially valid for $x=x_0$.
Hence
\[
\lim_{r\to 0}\frac{\nu^x(B_r(y))/G^*(x,y)}{\nu^{x_0}(B_r(y))/G^*(x_0,y)}=1,
\]
provided that $\nu^{x_0}(B_r(y)) >0$ for all $r \in (0, r_0)$.

Therefore, define a family of measures $\{\mu^x\}_{x\in \mathcal{B}}$ by
\[
d\mu^x(y)=\frac{1}{G^*(x,y)}\,d\nu^x(y).
\]
Then each $\mu^x$ is well-defined and satisfies $\mu^x=\mu^{x_0}$.
In other words, there exists a measure $\mu$, independent of $x$, such that for every $x\in\mathcal B$,
\[
d\mu(y)=\frac{1}{G^*(x,y)}\,d\nu^x(y).
\]
Since $\nu^x(\mathcal B)=\mathbb{P}_{\mathcal{B}}\hat{u}_{K}(x)=\hat{u}_{K}(x)$, we obtain
\[
\hat{u}_K(x)=\nu^x(\mathcal B)=\int_{\mathcal B} 1 \,d\nu^x(y)=\int_{\mathcal B} G^*(x,y)\,d\mu(y).
\]

Finally, since $\nu^x$ is supported on $\partial K$, the same is true of $\mu$.
This completes the proof of the theorem.
\end{proof}

\begin{lemma}			\label{lem40}
Suppose that $\nu^{x_0}(B_r(y))>0$ for all $r \in (0,r_0)$.
Then the family $\{v_r\}_{0<r<r_0}$ converges locally uniformly in $\mathcal{B}\setminus \{x_0\}$ to $G^*(\,\cdot,y)/G^*(x_0,y)$ as $r\to 0$.
\end{lemma}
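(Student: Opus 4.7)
The plan is to exploit an integral representation of $h_r := \mathbb{P}_{B_r(y)}\hat u_K$ against $G^*$, together with continuity of $G^*(x,\cdot)$ at $y$ for fixed $x \neq y$. By Lemma \ref{lem0800sun}, $h_r$ is a nonnegative $L^*$-supersolution in $\mathcal{B}$, an $L^*$-solution in $\mathcal{B}\setminus\overline{B_r(y)}$, and since $\hat u_K$ is an $L^*$-potential (Lemma \ref{lem1013sat}), $h_r$ vanishes continuously on $\partial\mathcal{B}$. I would first establish
\[
h_r(x) = \int_{\overline{B_r(y)}} G^*(x,z)\,d\sigma_r(z),\qquad x \in \mathcal{B},
\]
for a nonnegative Borel measure $\sigma_r$ supported in $\overline{B_r(y)}$. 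This is the Riesz-type decomposition for an $L^*$-potential whose associated measure charges only the set where $h_r$ fails to be an $L^*$-solution. The hypothesis $\nu^{x_0}(B_r(y))>0$, together with $\nu^{x_0}(B_r(y)) = h_r(x_0)$ from Proposition \ref{prop0800tue}, forces $\sigma_r\not\equiv 0$, so the denominator in the definition of $v_r(x_0)$ is strictly positive.

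Next, I would establish uniform stability of $G^*$ in its second variable near $y$. By the symmetry $G^*(x,z) = G(z,x)$ and the continuity of $L$-solutions under Dini mean oscillation from \cite{DEK18}, the map $z \mapsto G(z,x)$ is continuous at $z = y$ for each fixed $x \neq y$. Combined with the two-sided pointwise bounds of Theorems \ref{thm_green_function} and \ref{thm_green_function2d}, this upgrades to
\[
\sup_{x \in K',\; z\in \overline{B_r(y)}}\, \abs{\frac{G^*(x,z)}{G^*(x,y)} - 1} \longrightarrow 0 \quad\text{as }r\to 0,
\]
for any compact $K' \Subset \mathcal{B}\setminus\{y\}$. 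Writing
\[
v_r(x) = \frac{\int G^*(x,z)\,d\sigma_r(z)}{\int G^*(x_0,z)\,d\sigma_r(z)},
\]
factoring out $G^*(x,y)$ and $G^*(x_0,y)$ from numerator and denominator, and applying the uniform ratio estimate on a compact $K' \Subset \mathcal{B}\setminus\{x_0,y\}$ yields the claimed locally uniform convergence $v_r(x) \to G^*(x,y)/G^*(x_0,y)$.

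The main obstacle is the Riesz representation in the first step. If that decomposition is not directly at hand within the potential-theoretic framework developed so far, I would instead proceed by compactness. Iterated application of the Harnack inequality (Theorem \ref{thm02}) along a chain connecting $x$ to $x_0$ gives a uniform bound on $v_r$ over compact subsets of $\mathcal{B}\setminus\{y\}$; the continuity estimates of \cite{DEK18} yield equicontinuity there; and Arzel\`a--Ascoli extracts a subsequence converging locally uniformly to a nonnegative $L^*$-solution $v$ on $\mathcal{B}\setminus\{y\}$ with $v(x_0)=1$ and $v=0$ on $\partial\mathcal{B}$. Identifying $v$ with $G^*(\cdot,y)/G^*(x_0,y)$ then reduces to the uniqueness of the normalized positive $L^*$-solution in $\mathcal{B}\setminus\{y\}$ vanishing on $\partial\mathcal{B}$ with a single interior singularity at $y$; this is a Martin-kernel type statement, and it is accessible once the two-sided Green's function bounds of Theorems \ref{thm_green_function} and \ref{thm_green_function2d} are in place.
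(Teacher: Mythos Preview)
Your primary route assumes a representation $h_r(x)=\int_{\overline{B_r(y)}} G^*(x,z)\,d\sigma_r(z)$ with a measure $\sigma_r$ independent of $x$. But look at where Lemma~\ref{lem40} sits: it is invoked inside the proof of Theorem~\ref{thm_capacity_measure} precisely to show that the family $\{\nu^x\}$ supplied by Proposition~\ref{prop0800tue} satisfies $d\nu^x(z)=G^*(x,z)\,d\mu(z)$ for a \emph{single} measure $\mu$. In other words, passing from the $x$-dependent measures $\nu^x$ to an $x$-independent $\sigma_r$ (or $\mu$) is exactly the conclusion one draws from Lemma~\ref{lem40}, not an input available beforehand. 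So your first approach is circular in this context; you correctly flag the Riesz step as the main obstacle, but it is fatal here rather than a technicality.

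Your fallback is the paper's approach: Harnack gives local uniform bounds on $\{v_r\}$ via the normalization $v_r(x_0)=1$, compactness produces a subsequential limit $v$ that is a nonnegative $L^*$-solution in $\mathcal{B}\setminus\{y\}$ with $v=0$ on $\partial\mathcal{B}$ and $v(x_0)=1$, and one must identify $v$ with $G^*(\cdot,y)/G^*(x_0,y)$. You defer this identification to a ``Martin-kernel type statement,'' but the paper carries it out explicitly, and this is really the substance of the proof. One sets $k=\sup\{\alpha\ge 0:\ v-\alpha G^*(\cdot,y)\ge 0\text{ in }\mathcal{B}\setminus\{y\}\}$, finite by the two-sided Green's function bounds. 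If $v-(k+\epsilon)G^*>0$ at some point, the comparison principle (both functions vanish on $\partial\mathcal{B}$) propagates positivity to every sphere $\partial B_r(y)$; applying Harnack on annuli to the two nonnegative $L^*$-solutions $v-kG^*$ and $\epsilon G^*$ separately then forces $v-(k+\epsilon/N^2)G^*\ge 0$ throughout $\mathcal{B}\setminus\{y\}$ for a fixed Harnack constant $N$, contradicting the maximality of $k$. Hence $v=kG^*(\cdot,y)$, and $v(x_0)=1$ fixes $k$. Note also that your phrase ``with a single interior singularity at $y$'' overstates what is known: no blow-up of $v$ at $y$ is available a priori, and the argument above does not assume one---it allows $k=0$ and rules it out only via the normalization at $x_0$.
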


\begin{proof}
Let $\epsilon \in (0,r_0)$.
For each $r \in (0,\epsilon)$, the function $v_r$ is an $L^*$-solution in $\mathcal{B} \setminus \overline B_\epsilon(y)$.
Since $v_r(x_0)=1$, the Harnack inequality, Theorem \ref{thm02}, implies that the family $\{v_r\}_{0<r<\epsilon}$ is uniformly bounded on every compact subset of $\mathcal{B} \setminus \overline B_\epsilon(y)$.

Hence, by a standard diagonalization argument, there exists a sequence $\{r_k\}_{k=1}^\infty$ with $r_k\to 0$ such that $v_{r_k}$ converges weakly to a function $v$ in $L^p_{\rm loc}(\mathcal{B}\setminus \{y\})$ for every $p \in (1,\infty)$.
It is straightforward to verify that $v$ satisfies
\[
v \in C(\mathcal{B} \setminus \{y\}),\quad L^*v =0\;\text{ in }\;\mathcal{B}\setminus \{y\},\quad v=0\;\text{ on }\;\partial\mathcal{B},\quad v(x_0)=1.
\]

We show that $v=kG^*(\,\cdot, y)$ for some nonnegative constant $k$.
Define $k$ by
\[
k:=\sup\, \{ \alpha \in \mathbb{R}: v(x)-\alpha G^*(x,y) \ge 0\;\text{ for all }\;x\in \mathcal{B},\;x\neq y\}.
\]
Since $v \ge 0$ in  $\mathcal B\setminus \{y\}$, we have $k \ge 0$.
Moreover, $k<+\infty$.
Indeed, if $\alpha$ belongs to the set above, then evaluating at $x_0$ gives
\[
0\le v(x_0)-\alpha G^*(x_0,y)= 1-\alpha G^*(x_0,y) \implies \alpha\le \frac{1}{G^*(x_0,y)}.
\]
Therefore, $k$ is a nonnegative real number.
By the definition of $k$, we have
\begin{equation}	\label{eq0804tue}
v-kG^*(\,\cdot,y) \ge 0\quad \text{in }\; \mathcal{B}\setminus \{y\}.
\end{equation}

Next, we show that, for every $\epsilon>0$, 
\[
v-(k+\epsilon)G^*(\,\cdot,y) \le 0\quad \text{in }\;\mathcal{B}\setminus \{y\}.
\]
Letting $\epsilon\to 0$ and recalling \eqref{eq0804tue}, we conclude that $v=kG(\,\cdot,y)$.

Suppose, to the contrary, that there exists $x_1 \neq y$ such that
\[
v(x_1)-(k+\epsilon)G^*(x_1,y) >0.
\]
The function $v-(k+\epsilon)G^*(\,\cdot,y)$ is an $L^*$-solution in $\mathcal{B}\setminus \{y\}$, and vanishes on $\partial \mathcal{B}$.
Set 
\[
r_1=\min \left\{\abs{x_1-y}, \tfrac12\dist(y,\partial\mathcal{B}), r_0\right\}.
\]
Then, by the comparison principle, for every $r \in (0,r_1)$,
\begin{equation}			\label{eq0903mon}
0<\sup_{\partial B_r(y)} \left(v-(k+\epsilon)G^*(\,\cdot, y)\right) \le \sup_{\partial B_r(y)} \left(v-kG^*(\,\cdot, y)\right) -\inf_{\partial B_r(y)} \epsilon G^*(\,\cdot, y).
\end{equation}

We note that both $v-kG^*(\,\cdot, y)$ and $\epsilon G^*(\,\cdot, y)$ are nonnegative $L^*$-solutions in $\mathcal{B}\setminus \{y\}$.
Moreover, any two points on $\partial B_{r}(y)$ can be connected by a chain of at most $n_0=n_0(d)$ balls with radius $r/2$, all contained in $B_{2r}(y)\setminus B_{r/2}(y)$.
Therefore, applying the Harnack inequality, Theorem \ref{thm02}, successively to $v-kG^*(\,\cdot, y)$, we obtain
\begin{equation}			\label{eq0904mon}
\sup_{\partial B_r(y)} \left(v-kG^*(\,\cdot, y)\right) \le N\inf_{\partial B_r(y)} \left(v-kG^*(\,\cdot, y)\right),
\end{equation}
where $N$ is independent of $r$.
The same inequality holds for $\epsilon G^*(\,\cdot,y)$.
Therefore, from \eqref{eq0903mon} and \eqref{eq0904mon}, we obtain
\[
0<N \inf_{\partial B_r(y)} \left(v-kG^*(\,\cdot, y)\right) - N^{-1} \sup_{\partial B_r(y)}  \epsilon G^*(\,\cdot, y),\quad \forall r \in (0, r_1).
\]
Consequently, for every  $x \in B_{r_1}(y)\setminus \{y\}$, we have
\[
0 \le v(x)-(k+\epsilon/N^2)G^*(x,y).
\]
Since $v-(k+\epsilon/N^2)G^*(\,\cdot,y)=0$ on $\partial\mathcal{B}$, the comparison principle implies that the same inequality holds for all $x\in \mathcal{B}$, ($x \neq y$).
This contradicts the definition of $k$.

We have shown that $v=kG^*(\,\cdot, y)$.
Since $v(x_0)=1$, it follows that $k=1/G^*(x_0,y)$.
Thus $v=G^*(\,\cdot,y)/G^*(x_0,y)$.
The limit is independent of the subsequence $r_k$.
Hence the whole family $\{v_r\}_{0<r<r_0}$ converges to $G^*(\,\cdot,y)/G^*(x_0,y)$.
Moreover, by the local continuity estimates for $L^*$-solutions, the convergence is locally uniform in $\mathcal B\setminus\{y\}$; see \cite{DK17, DEK18}.
This completes the proof.
\end{proof}

Our definition of the capacitary measure is consistent with the equilibrium measure introduced in \cite{GW82} when $L^*=\Delta$, since in that case $W\equiv1$.
For further details, see \cite[Remark 5.18]{DKK25a}.

Theorem \ref{thm_equivalence} below states that, under Conditions \ref{cond1} and \ref{cond2}, the capacity is comparable to the corresponding capacity for the Laplacian.
We need the following lemmas for its proof.

\begin{lemma}		\label{lem2020sat}
Let $K$ be a compact subset of $\mathcal B$, and let $\mathfrak M_K$ denote the set of all Borel measures $\nu$ supported on $K$ such that
\[
\int_K G^*(x,y)\,d\nu(y) \le W(x)
\qquad
\text{for every }\; x\in\mathcal B.
\]
Then
\[
\hat u_{K}(x)=\sup_{\nu \in \mathfrak{M}_K} \int_K G^*(x,y)\,d\nu(y),\qquad x\in\mathcal B.
\]
\end{lemma}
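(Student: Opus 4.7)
The plan is to prove the two inequalities separately. One direction is immediate from results already established; the substantive content lies in the reverse, which reduces to a comparison argument on $\mathcal{B}\setminus K$ with a subtle boundary step.

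For the inequality $\hat u_K(x)\le\sup_{\nu\in\mathfrak{M}_K}\int_K G^*(x,y)\,d\nu(y)$, I will observe that the capacitary measure $\mu_K$ furnished by Theorem \ref{thm_capacity_measure} is itself an admissible competitor. Indeed, by Lemma \ref{lem1013sat}(a) we have $\hat u_K=G^*\mu_K\le W$ throughout $\mathcal{B}$, so $\mu_K\in\mathfrak{M}_K$; substituting $\nu=\mu_K$ in the supremum recovers $\hat u_K(x)$.

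For the reverse inequality, fix $\nu\in\mathfrak{M}_K$ and set $v:=G^*\nu$, which is an $L^*$-potential, $L^*$-harmonic off $\supp\nu\subset K$, and globally bounded by $W$ on $\mathcal{B}$ by the very definition of $\mathfrak{M}_K$. Since $v$ is lower semicontinuous as a Green potential of a nonnegative measure and $\hat u_K(x)=\liminf_{y\to x}u_K(y)$ is the lower semicontinuous regularization of $u_K$, it suffices to prove the pointwise bound $v\le u_K$; in view of the definition of $u_K$, it is therefore enough to show $v\le w$ in $\mathcal{B}$ for every nonnegative $L^*$-supersolution $w$ with $w\ge W$ on $K$. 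For such a $w$, the bound $v\le W\le w$ is immediate on $K$. On the open set $\mathcal{B}\setminus K$ the function $w-v$ is an $L^*$-supersolution (since $v$ is an $L^*$-solution there), and I plan to apply the comparison principle of Lemma \ref{lem2334sun}(b): on $\partial\mathcal{B}$, $v$ vanishes continuously (as a potential) and $w\ge 0$, so $\liminf(w-v)\ge 0$; on $\partial K$, lower semicontinuity of $w$ together with $w(x)\ge W(x)$ for $x\in K$ gives $\liminf_{y\to x}w(y)\ge W(x)$, while the global bound $v\le W$ combined with the continuity of $W$ (Lemma \ref{lem02}) yields $\limsup_{y\to x}v(y)\le W(x)$. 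Hence $\liminf_{y\to x}(w-v)(y)\ge 0$ at every boundary point of $\mathcal{B}\setminus K$, and the comparison principle yields $w\ge v$ there, completing the proof that $v\le w$ pointwise.

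The main obstacle is exactly this boundary step on $\partial K$. The naive estimate $\liminf_{y\to x}(w-v)(y)\ge w(x)-v(x)$ is \emph{not} available because $v$ is only lower semicontinuous, so $\limsup_{y\to x} v(y)$ can strictly exceed $v(x)$. The argument is rescued by noting that the very condition $\nu\in\mathfrak{M}_K$ produces a \emph{continuous} upper envelope, $W$, for $v$, and on $K$ this envelope matches exactly the lower envelope $W$ imposed on $w$ by its admissibility; this matching is what closes the boundary estimate cleanly, without any need to approximate $\nu$ by smoother measures or to restrict its support to the interior of $K$.
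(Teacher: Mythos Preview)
Your proposal is correct and follows essentially the same route as the paper's proof: both directions are handled identically in spirit, with the capacitary measure $\mu_K$ witnessing the $\le$ inequality, and the $\ge$ inequality obtained by fixing $\nu\in\mathfrak{M}_K$, comparing $g_\nu:=G^*\nu$ against an arbitrary admissible competitor $w$ via the comparison principle on $\mathcal{B}\setminus K$, and then passing from $g_\nu\le u_K$ to $g_\nu\le\hat u_K$. The only cosmetic differences are that you spell out the boundary step on $\partial K$ more carefully (via $\liminf w\ge W(x)$ and $\limsup g_\nu\le W(x)$, exploiting the continuity of $W$), whereas the paper states $w\ge W\ge g_\nu$ on $\partial K$ directly; and you invoke lower semicontinuity of $g_\nu$ to deduce $g_\nu\le\hat u_K$ from $g_\nu\le u_K$, while the paper writes this out explicitly via Fatou's lemma applied to $G^*(\cdot,y)$.
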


\begin{proof}
Let  $g_\nu(x):=\int_K G^*(x,y)\,d\nu(y)$.
By Lemma \ref{lem1013sat} (a) and Theorem \ref{thm_capacity_measure}, we have
\[
\hat{u}_K(x) \le \sup_{\nu \in \mathfrak{M}_K} g_{\nu}(x)=\sup_{\nu \in \mathfrak{M}_K} \int_K G^*(x,y)\,d\nu(y).
\]

Next, let $v \in \mathfrak{S}^+(\mathcal{B})$ satisfy $v\ge 0$ in $\mathcal{B}$ and $v \ge W$ on $K$.
Then, for any $\nu \in \mathfrak{M}_K$, we have $g_\nu \le W \le v$ in $K$.
In particular, $v \ge W \ge g_\nu$ on $\partial K$.
Since $v \ge 0$ in $\mathcal B$ and $g_\nu$ vanishes continuously on $\partial \mathcal B$, we have $v \ge g_{\nu}$ on $\partial \mathcal{B}$.
Therefore,
\[
v \ge g_{\nu}\quad \text{on }\;\partial(\mathcal{B}\setminus K).
\]
Since $v$ is an $L^*$-supersolution in $\mathcal{B}$ and $g_\nu$ is an $L^*$-solution in $\mathcal{B}\setminus K$, the comparison principle gives
\[
v \ge g_{\nu}\quad \text{in }\;\mathcal{B} \setminus K.
\]

Hence, $v \ge g_{\nu}$ in $\mathcal{B}$.
Recalling Definition \ref{def_rfn} and taking the infimum over all such $v$, we obtain
\[
u_K(x) \ge g_{\nu}(x)=\int_K G^*(x,y)\,d\nu(y),\qquad  x \in \mathcal{B}.
\]

We next pass to the lower semicontinuous regularization.
By the definition of $\hat u_K$, we have
\begin{align*}
\hat{u}_K (x) &= \sup_{r> 0} \left(\inf_{z \in B_r(x) \cap \mathcal{B}}u_K(z) \right) \geq \lim_{r \to 0} \bigg(\inf_{z \in B_r(x)\cap \mathcal{B}}\int_{K}G^*(z,y)\,d\nu(y)\bigg)\\ &\geq \int_K\liminf_{z \to x}G^*(z,y)\,d\nu(y) =\int_K G^*(x,y)\,d\nu(y),
\end{align*}
where we used Fatou's lemma.
Taking the supremum over $\nu\in \mathfrak{M}_K$, we complete the proof.
\end{proof}

\section{Wiener criterion: $d \ge 3$}			\label{sec5}
Throughout this section, we assume that $d\ge3$.
Recall that the concentric balls $\mathcal B$, $\mathcal B'$, and $\mathcal B''$ are defined in \eqref{eq_balls}.

\begin{lemma}				\label{lem0956fri}
There exists a constant $N>0$, depending only on $d$, $\lambda$, $\Lambda$, $\omega_{\mathbf A}$, and $R_0$, such that for any compact set $K \subset \mathcal{B}''$ and any point $x^o \in \mathcal{B}\setminus \mathcal{B}'$, we have
\[
N^{-1} \hat u_K(x^o) \le \capacity(K)  \le N\hat u_K(x^o).
\]
\end{lemma}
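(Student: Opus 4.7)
\smallskip
\noindent\textbf{Proof plan.} The argument reduces, via the integral representation
\[
\hat{u}_K(x^o) = \int_K G^*(x^o,y)\,d\mu(y)
\]
furnished by Theorem~\ref{thm_capacity_measure} (in which $\mu=\mu_K$ is the $L^*$-capacitary measure of $K$ with total mass $\mu(K)=\capacity(K)$ and support contained in $\partial K \subset \mathcal{B}''$), to the pointwise claim that $G^*(x^o,y)$ is pinched between two positive constants depending only on $d$, $\lambda$, $\Lambda$, $\omega_{\mathbf A}$, and $R_0$, uniformly for $x^o \in \mathcal{B}\setminus\mathcal{B}'$ and $y \in \mathcal{B}''$. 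Once
\[
c^{-1} R_0^{2-d} \;\le\; G^*(x^o,y) \;\le\; c\, R_0^{2-d}
\]
is in hand, integrating against $d\mu$ over $K$ and absorbing the factor $R_0^{2-d}$ into $N$ gives both inequalities of the lemma at once.

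\smallskip
\noindent\textbf{Step 1 (baseline via the two-sided Green's function bound).} Fix once and for all a reference point $x_* \in \overline{\mathcal{B}'}$ at a definite distance from $\overline{\mathcal{B}''}$; for instance, take any $x_*$ with $|x_*|=\tfrac{3}{2}R_0$. For every $y \in \mathcal{B}''$ both $x_*$ and $y$ lie in $\overline{\mathcal{B}'}$, and $|x_*-y|$ is trapped between constant multiples of $R_0$. Applying Theorem~\ref{thm_green_function} to this pair gives $G^*(x_*,y) \asymp R_0^{2-d}$, with constants depending only on the listed parameters.

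\smallskip
\noindent\textbf{Step 2 (propagate from $x_*$ to $x^o$ via Harnack chaining).} For fixed $y \in \mathcal{B}''$, $x \mapsto G^*(x,y)$ is a nonnegative weak solution of $L^*u=0$ on $\mathcal{B}\setminus\{y\}$. Both $x_*$ and $x^o$ lie in the open shell $\mathcal{B}\setminus\overline{\mathcal{B}''}$, on which $G^*(\cdot,y)$ is nonnegative and $L^*$-harmonic. Construct a chain of overlapping balls of radius comparable to $R_0$ joining $x_*$ to $x^o$ inside this shell so that each ball's concentric fourfold enlargement remains in $\mathcal{B}\setminus\{y\}$; because the shell has fixed geometry controlled solely by $R_0$, the chain can be realized with a number of balls bounded purely in terms of $d$ and $R_0$. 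Iterating Theorem~\ref{thm02} along the chain then yields $G^*(x^o,y) \asymp G^*(x_*,y)$, and combining with Step~1 delivers $G^*(x^o,y) \asymp R_0^{2-d}$.

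\smallskip
\noindent\textbf{Main obstacle.} The genuinely delicate point is arranging the Harnack chain of Step~2 with a uniformly bounded length. Each Harnack ball must, together with its $4\times$ dilate, remain inside $\mathcal{B}$ while steering clear of the singular point $y \in \mathcal{B}''$. The construction must be carried out carefully so that the accumulated Harnack constant depends only on $R_0$ and the structural parameters, not on the particular location of $x^o$ within $\mathcal{B}\setminus\mathcal{B}'$ nor of $y$ within $\mathcal{B}''$; using a fixed geometric template of balls whose centers trace a path in $\partial B_{(5/2)R_0}(0)$ (together with a radial arc at each end) gives a clean realization of such a uniformly controlled chain.
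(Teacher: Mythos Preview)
Your proposal is correct and shares the paper's starting point: write $\hat u_K(x^o)=\int_K G^*(x^o,y)\,d\mu(y)$ via Theorem~\ref{thm_capacity_measure} and reduce to showing $G^*(x^o,y)\asymp R_0^{2-d}$ uniformly for $x^o$ in the outer shell and $y\in K\subset\mathcal B''$.

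The paper's proof is a single line: it simply invokes Theorem~\ref{thm_green_function}, noting that $R_0\le|x^o-y|\le 3R_0$. Strictly speaking, Theorem~\ref{thm_green_function} is stated for $x,y\in\overline{\mathcal B'}$, so this direct application literally covers only $x^o\in\partial\mathcal B'$ (and indeed the upper bound $|x^o-y|\le 3R_0$ written there requires $|x^o|=2R_0$); that boundary case is the only one used downstream, e.g.\ in the proof of Theorem~\ref{thm_equivalence}. Your Step~2 Harnack-chain argument, propagating from a reference point $x_*\in\overline{\mathcal B'}$ to an arbitrary $x^o\in\mathcal B\setminus\mathcal B'$, is exactly what is needed to justify the lemma over the full shell as stated, so your route is slightly more elaborate but also more complete. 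The ``main obstacle'' you flag is not a genuine difficulty: the shell $\mathcal B\setminus\overline{\mathcal B''}$ has fixed geometry governed by $R_0$, and a chain of bounded length with the required $4\times$-dilate clearance is immediate.
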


\begin{proof}
By Theorem \ref{thm_capacity_measure},
\[
\hat u_K(x^o)=\int_{K}G^*(x^o, y)\,d\mu(y),
\]
where $\mu$ is the capacitary measure of $K$.
The two-sided Green function estimates in Theorem \ref{thm_green_function} then give the desired conclusion, since for every $y\in K$,
\[
R_0 \le \abs{x^o-y} \le 3R_0.
\]
This completes the proof.
\end{proof}

\begin{theorem}			\label{thm_equivalence}
Let $K$ be a compact subset of $\mathcal B''$, and let $\capacity^\Delta(K)$ denote the capacity of $K$ associated with the Laplacian.
Then there exists a constant $N>0$, depending only on $d$, $\lambda$, $\Lambda$, $\omega_{\mathbf A}$, $p_0$, and $R_0$, such that
\[
N^{-1} \capacity^\Delta(K) \le \capacity(K) \le  N \capacity^\Delta(K).
\]
\end{theorem}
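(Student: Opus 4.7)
My plan is a two-step reduction followed by a global comparison of admissible measures. First, Lemma \ref{lem0956fri} gives the two-sided comparison $\capacity(K) \approx \hat u_K(x^o)$ for any $x^o \in \mathcal{B}\setminus \mathcal{B}'$; its proof uses only Theorem \ref{thm_capacity_measure} and the two-sided Green's function bounds, neither of which is specific to $L^*$. Consequently the identical argument with $W$ replaced by the constant $1$ and $G^*$ replaced by the Laplacian Green's function $G_\Delta$ in $\mathcal{B}$ yields $\capacity^\Delta(K) \approx \hat u_K^\Delta(x^o)$, where $\hat u_K^\Delta$ denotes the Laplacian capacitary potential. Hence everything reduces to showing $\hat u_K(x^o) \approx \hat u_K^\Delta(x^o)$.

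Next, by Lemma \ref{lem2020sat} and its obvious Laplacian counterpart,
\[
\hat u_K(x^o) = \sup_{\nu\in\mathfrak{M}_K}\int_K G^*(x^o,y)\,d\nu(y), \qquad \hat u_K^\Delta(x^o) = \sup_{\nu\in\mathfrak{M}_K^\Delta}\int_K G_\Delta(x^o,y)\,d\nu(y),
\]
where $\mathfrak{M}_K^\Delta$ is defined analogously, with $G_\Delta$ and the constant $1$ in place of $G^*$ and $W$. Since $R_0\le|x^o-y|\le 3R_0$ for $y\in K$, both $G^*(x^o,y)$ and $G_\Delta(x^o,y)$ are bounded above and below by positive constants, so the two integrands differ by at most a uniform multiplicative factor. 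It therefore suffices to show that $\mathfrak{M}_K$ and $\mathfrak{M}_K^\Delta$ coincide up to a uniform rescaling of $\nu$.

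On the inner ball $\overline{\mathcal{B}'}$ this comparison is immediate: Theorem \ref{thm_green_function} together with the classical estimate $G_\Delta(x,y)\approx|x-y|^{2-d}$ gives $G^*\approx G_\Delta$, while Lemma \ref{lem02} gives $\gamma_0\le W\le \gamma_0^{-1}$. The delicate point is propagating admissibility to the outer annulus $\mathcal{B}\setminus\overline{\mathcal{B}'}$. Since $\nu$ is supported in $K\subset \mathcal{B}''$, the function $u^*(x):=\int_K G^*(x,y)\,d\nu(y)$ is an $L^*$-solution in $\mathcal{B}\setminus K$ vanishing on $\partial\mathcal{B}$, and Lemma \ref{lem03} applied to $u^*/W$ propagates the bound from $\partial\mathcal{B}'$ across the annulus; the classical harmonic maximum principle does the same for $\int G_\Delta(x,y)\,d\nu(y)$. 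This yields containment of the admissible classes in both directions up to uniform constants, and thus the desired equivalence of capacities.

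The main obstacle is precisely this outer-annulus step: because $L^*$ lacks a classical maximum principle, one must invoke the relative principle of Lemma \ref{lem03} (comparing $u^*/W$ rather than $u^*$ itself), which is ultimately why the auxiliary function $W$—rather than the constant $1$—appears in the very definition of the admissible class $\mathfrak{M}_K$.
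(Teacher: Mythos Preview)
Your proposal is correct and follows essentially the same approach as the paper: reduce via Lemma \ref{lem0956fri} to comparing $\hat u_K(x^o)$ with $\hat u_K^\Delta(x^o)$, use Lemma \ref{lem2020sat} to rewrite both as suprema over admissible measures, compare the Green's functions directly on $\overline{\mathcal{B}'}$, and then propagate admissibility across the remaining region by the comparison principle. The only cosmetic difference is that the paper applies the comparison principle on $\mathcal{B}\setminus K$ (using the already-established bound on $\partial K$ together with vanishing on $\partial\mathcal{B}$) rather than on the outer annulus $\mathcal{B}\setminus\overline{\mathcal{B}'}$ as you do, but the underlying mechanism---the $u/W$ maximum principle encoded in Lemma \ref{lem03} and Lemma \ref{lem2334sun}(b)---is identical.
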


\begin{proof}
Let $G^\Delta(x,y)$ be the Green function for the Laplacian in $\mathcal B$.
We denote by $\hat u_K^\Delta$ and $\mu^\Delta$, respectively, the capacitary potential and the capacitary measure of $K$ associated with the Laplacian.

By Theorem \ref{thm_green_function}, there is a constant $\alpha>0$ such that
\begin{equation}		\label{greens}
\alpha G^\Delta(x,y) \le G^*(x,y) \le \alpha^{-1} G^\Delta(x,y),\qquad  x,y\in\overline{\mathcal B'},\quad x\ne y.
\end{equation}
Let $\mu$ be a Borel measure supported on $K$ such that
\begin{equation}		\label{eq1844thu}
\int_K G^\Delta(x, y)\, d\mu(y) \leq 1,\qquad x \in \mathcal{B}.
\end{equation}
Set $\tilde\mu:=\gamma_0\alpha \mu$, where $\gamma_0$ is the constant from Lemma \ref{lem02}.
Using \eqref{greens}, we obtain
\begin{equation}			\label{eq1747thu}
\gamma_0^{-1} \int_K G^*(x, y)\,d\tilde\mu(y) \le \int_K G^\Delta(x,y)\, d\mu(y) \le 1,\qquad x \in \overline{\mathcal{B}'}.
\end{equation}
Note that the function
\[
u(x):=\int_K G^*(x, y)\, d\tilde\mu(y)
\]
satisfies $L u=0$ in $\mathcal{B} \setminus K$.
Since $u=0$ on $\partial \mathcal{B}$ and $u \le \gamma_0\le W$ on $\partial K$, the comparison principle gives $u \le W$ in $\mathcal{B} \setminus K$.
Combining this with \eqref{eq1747thu}, we obtain
\[
\int_K G^*(x,y)\, d \tilde\mu(y) \le \gamma_0 \le W,\qquad x \in \mathcal{B}.
\]
Therefore, by Lemma~\ref{lem2020sat}, we obtain
\[
\int_K G^*(x,y) \,d \tilde\mu(y) \le \hat u_K(x),\qquad x \in \mathcal{B}.
\]
Since the measure $\mu$ is supported on $K \subset \mathcal{B}'$, it follows from \eqref{greens} that, for $x \in \overline{\mathcal{B}'}$,
\[
\gamma_0 \int_{K} G^\Delta(x, y)\,d\mu(y) \le \gamma_0\alpha^{-1} \int_K G^*(x,y)\, d\mu(y) = \alpha^{-2} \int_K G^*(x ,y)\,d\tilde{\mu}(y) \le \alpha^{-2}  \hat u_K(x).
\]
Taking the supremum over all measures $\mu$ satisfying \eqref{eq1844thu} and using Lemma~\ref{lem2020sat} again, we obtain
\[
\gamma_0 \hat u^\Delta_K(x) \le \alpha^{-2} \hat u_K(x),\qquad x \in \overline{\mathcal{B}'}.
\]
Choosing $x^o \in \partial\mathcal{B}'$ and applying Lemma~\ref{lem0956fri}, we conclude that
$\capacity^\Delta(K) \le N \capacity(K)$ for some constant $N>0$.
Interchanging the roles of $G^\Delta(x,y)$ and $G^*(x,y)$, we also obtain $\capacity(K) \le N \capacity^\Delta(K)$.
\end{proof}

\begin{lemma}		\label{lem1650thu}
Let $N_0$ be the constant in Theorem \ref{thm_green_function}.
If $K\subset \overline B_r(x_0)$ and $B_{2r}(x_0)\subset\mathcal B'$, then
\[
\capacity(K) \le 3^{d-2} \gamma_0^{-1} N_0 r^{d-2}.
\]
\end{lemma}
\begin{proof}
Observe that
\[
\sup\left\{\,\abs{x-y}: x \in \partial B_{2r}(x_0), y \in K  \right\} \le 3r.
\]
Therefore, by Theorem \ref{thm_green_function}, Lemma \ref{lem1013sat}, Theorem \ref{thm_capacity_measure}, and Lemma \ref{lem02}, for every $x \in \partial B_{2r}(x_0)$ we have
\[
N_0^{-1}(3r)^{2-d} \mu(K)\le \int_K G^*(x,y)\,d\mu(y) =\hat u_K(x) \le W(x) \le \gamma_0^{-1}.
\]
This completes the proof.
\end{proof}

Let $r_0$ be fixed with $0<r_0<\frac12 R_0$, where $R_0$ is defined in \eqref{eq_balls}.
The following lemma gives a convenient characterization of regular boundary points in terms of capacitary potentials.

\begin{lemma}		\label{lem_charact}
Let $\Omega \subset \mathcal{B}''$ and let $x_0 \in \partial \Omega$.
For $r\in(0,r_0)$, set
\[
B_r=B_r(x_0), \qquad E_r = \overline{B_r}\setminus\Omega.
\]
Then $x_0$ is regular if and only if
\[
\hat{u}_{E_r}(x_0)=W(x_0)
\]
for every $r \in (0, r_0)$.
\end{lemma}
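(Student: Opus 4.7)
The plan is to prove both directions by adapting the corresponding characterization \cite[Lemma~6.1]{DKK25a} in the non-divergence setting to the double-divergence operator $L^*$, with the function $W$ from Lemma~\ref{lem02} taking the place of the constant $1$ throughout.

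$(\Rightarrow)$: Assume $x_0\in\partial\Omega$ is regular and fix $r\in(0,r_0)$. From Lemma~\ref{lem1013sat}(a) we already have $\hat u_{E_r}(x_0)\le W(x_0)$, so only the reverse inequality needs proof. Since $x_0\in\partial E_r$, Lemma~\ref{lem1013sat}(e) gives
\[
\hat u_{E_r}(x_0)=\liminf_{x\to x_0,\,x\in\mathcal B\setminus E_r}\hat u_{E_r}(x),
\]
and in a neighborhood of $x_0$ the set $\mathcal B\setminus E_r$ coincides with $\Omega\cap B_r$. On $\Omega\cap B_r$, $\hat u_{E_r}$ is a bounded $L^*$-solution by Lemma~\ref{lem1013sat}(d); since $\hat u_{E_r}=W$ on $\mathrm{int}(E_r)$ by Lemma~\ref{lem1013sat}(b), its limits from the $E_r$-side at points of $\partial\Omega\cap B_r$ are given by $W$. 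By Lemma~\ref{lem1629mon}, $x_0$ is also regular with respect to $\Omega\cap B_r$. The plan is to compare $\hat u_{E_r}\rvert_{\Omega\cap B_r}$ with Perron solutions on $\Omega\cap B_r$ of continuous boundary data that approximate $W$ near $x_0$ on $\partial\Omega\cap\overline B_r$ and remain below $\hat u_{E_r}$ on $\overline\Omega\cap\partial B_r$; the comparison principle (Lemma~\ref{lem2334sun}(b)) together with the regularity of $x_0$ forces the Perron solutions to attain $W(x_0)$ at $x_0$, and hence $\hat u_{E_r}(x_0)\ge W(x_0)$.

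$(\Leftarrow)$: Assume $\hat u_{E_r}(x_0)=W(x_0)$ for every $r\in(0,r_0)$. Lower semicontinuity of $\hat u_{E_r}$ combined with the upper bound $\hat u_{E_r}\le W$ forces $\hat u_{E_r}$ to be continuous at $x_0$, with $\lim_{x\to x_0}\hat u_{E_r}(x)=W(x_0)$. By Lemma~\ref{lem1629mon}, it suffices to produce a barrier at $x_0$ with respect to $\Omega\cap B_\delta(x_0)$ for some $\delta>0$. The plan is to assemble such a barrier from positive combinations of the genuine $L^*$-supersolutions $W$ and $\hat u_{E_{r_k}}$ for a sequence $r_k\downarrow 0$, glued by means of the Pasting Lemma~\ref{lem2334sun}(e) to localize near $x_0$. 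The continuity of each $\hat u_{E_{r_k}}$ at $x_0$ drives the resulting supersolution to $0$ at $x_0$, while the lower bound $W\ge\gamma_0$ supplies positivity away from $x_0$ and preserves the $L^*$-supersolution property throughout.

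The main obstacle lies in direction $(\Leftarrow)$: the most natural candidate $W-\hat u_{E_r}$ is nonnegative, vanishes at $x_0$, and is bounded below on $\partial\Omega\setminus B_r$, yet it is an $L^*$-\emph{subsolution} rather than a supersolution, so it cannot serve directly as a barrier. Moreover, any single positive linear combination $\alpha W+\beta\hat u_{E_r}$ evaluates to $(\alpha+\beta)W(x_0)$ at $x_0$, which is strictly positive unless $\alpha=\beta=0$; thus no one-scale positive combination of $W$ and a single $\hat u_{E_r}$ can simultaneously be an $L^*$-supersolution and vanish at $x_0$. A genuine barrier must therefore arise from a multi-scale construction across a sequence of radii $r_k\downarrow 0$, combined via the Pasting Lemma in order to retain the $L^*$-supersolution property globally while driving the value down to $0$ at $x_0$. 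Balancing the coefficients at each scale so that positivity on the exterior and vanishing at $x_0$ are both achieved is the technical heart of the argument.
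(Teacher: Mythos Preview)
Your plan for $(\Leftarrow)$ rests on a misdiagnosed obstacle. You assert that $W-\hat u_{E_r}$ is only an $L^*$-subsolution, but a barrier need only be a supersolution \emph{in $\Omega$}. Since $E_r=\overline{B_r}\setminus\Omega$ is closed and disjoint from $\Omega$, Lemma~\ref{lem1013sat}(d) gives that $\hat u_{E_r}$ is an $L^*$-\emph{solution} in $\Omega$; hence so is $W-\hat u_{E_r}$, and indeed any linear combination $\alpha W+\beta\hat u_{E_r}$ with real $\alpha,\beta$. This removes the need for multi-scale gluing entirely. The paper exploits exactly this and never constructs a barrier: for each continuous $f$ on $\partial\Omega$ and each $\epsilon>0$ it exhibits the explicit $L^*$-solution in $\Omega$,
\[
\overline v=(f(x_0)+\epsilon)W+\tfrac{M}{\delta}(W-\hat u_{E_r}),\qquad M=\sup_{\partial\Omega}\,\abs{f-f(x_0)},
\]
with $r$ chosen so that $\abs{f-f(x_0)}<\epsilon$ on $\partial\Omega\cap B_{2r}$, and $\delta\in(0,1)$ obtained from the strong maximum principle (Lemma~\ref{lem03}) applied to $\hat u_{E_r}/W$ on $\mathcal{B}\setminus\overline B_{3r/2}$, which forces $\hat u_{E_r}\le(1-\delta)W$ outside $B_{2r}$. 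One checks $\overline v\ge fW$ on all of $\partial\Omega$, whence $\overline H_{fW}\le\overline v$; sending $x\to x_0$ and using the hypothesis yields $\limsup_{x\to x_0}\overline H_{fW}(x)\le(f(x_0)+\epsilon)W(x_0)$. The mirror construction bounds $\underline H_{fW}$ from below. No pasting lemma and no sequence $r_k\downarrow 0$ are needed.

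For $(\Rightarrow)$ the paper argues by contrapositive, which sidesteps the delicate issue in your sketch that points of $\partial\Omega\cap B_r$ need not be accessible from $\intr(E_r)$: assuming $\hat u_{E_r}(x_0)<W(x_0)$ for some $r$, it takes the explicit continuous datum $f(x)=(1-\abs{x-x_0}/r)_+W(x)$, observes directly from Definition~\ref{def_rfn} that the lower Perron solution satisfies $\underline H_f\le u_{E_r}=\hat u_{E_r}$ in $\Omega$, and concludes $\liminf_{x\to x_0}\underline H_f(x)\le\hat u_{E_r}(x_0)<f(x_0)$, contradicting regularity.
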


\begin{proof}
\noindent
\textbf{(Necessity)}
Suppose that $\hat{u}_{E_r}(x_0)<W(x_0)$ for some $r \in (0,r_0)$.
Define
\[
f(x)=\left(1-\abs{x-x_0}/r\right)_+ W(x),\qquad t_+:=\max(t,0).
\]
Let $u$ be the lower Perron solution to the Dirichlet problem
\[
Lu=0\;\text{ in }\;\Omega,\qquad  u=f\;\text{ on }\;\partial\Omega.
\]

By Definition \ref{def_rfn} and Lemma \ref{lem1013sat}(d), we have
\[
u \le u_{E_r}=\hat u_{E_r}\;\text{ in }\;\Omega.
\]
This contradicts the regularity at $x_0$, since
\[
\liminf_{x\to x_0,\,x\in \Omega}\ u(x) \le  \liminf_{x\to x_0}\, \hat u_{E_r}(x)= \hat u_{E_r}(x_0) < W(x_0)=f(x_0),
\]
where we used Lemma \ref{lem1013sat}(e) and Lemma \ref{lem2334sun}(c).
 
\medskip
\noindent
\textbf{(Sufficiency)}
Suppose that $\hat{u}_{E_r}(x_0)=W(x_0)$ for every $r\in(0,r_0)$.
Then, by Lemma \ref{lem2334sun}(c), we have 
\begin{equation}			\label{eq1212mon}
\liminf_{x\to x_0} \hat u_{E_r}(x) = W(x_0).
\end{equation}

Let $f \in C(\partial \Omega)$.
Then, for every $\epsilon>0$, there exists $r \in (0,r_0)$ such that
\[
\abs{f(x) - f(x_0)}< \epsilon
\]
for all $x \in \partial \Omega$ satisfying $\abs{x-x_0}<2r$. 
On the other hand, since
\[
L^* \hat{u}_{E_r}=0\quad\text{in }\;\mathcal{B} \setminus \overline B_{3r/2},
\]
and $\hat{u}_{E_r} \le W$ in $\mathcal{B}$, Lemma \ref{lem03} implies that
\[
\frac{\hat u_{E_r}}{W}<1\quad \text{on }\;\partial B_{2r},
\]
because $\hat u_{E_r}=0$ on $\partial\mathcal B$.
Hence
\[
\sup_{\partial B_{2r}} \,\frac{\hat{u}_{E_r}}{W}=1-\delta
\]
for some $\delta \in (0,1)$.
Set
\[
M:=\sup_{\partial \Omega} \,\abs{f-f(x_0)}
\]
and consider
\[
\overline v:=(f(x_0)+\epsilon)W + \frac{M}{\delta}(W- \hat{u}_{E_r}).
\]
Then
\[
\overline v \ge (f(x_0)+\epsilon)W \ge fW\;\text{ on }\;\partial \Omega \cap B_{2r}(x_0).
\]
Moreover, Lemma \ref{lem03} gives
\[
\hat{u}_{E_r} \le (1-\delta)W \quad \text{in }\;\mathcal{B}\setminus B_{2r}(x_0).
\]
Hence
\[
\overline v \ge (f(x_0)+\epsilon)W +MW \ge  fW \quad \text{on }\;\partial\Omega \setminus B_{2r}(x_0).
\]
Therefore, $\overline v \ge fW$ on $\partial \Omega$.

Let $\overline H_{fW}$ be the upper Perron solution, in the sense of Definition \ref{def_perron}, of the problem
\[
Lu=0\quad\text{in }\;\Omega,\qquad  u=fW\quad\text{on }\;\partial \Omega.
\]
Since $\overline v$ is an $L^*$-solution in $\Omega$ and $\overline v\ge fW$ on $\partial\Omega$, we have $\overline H_{fW} \le \overline v$ in $\Omega$.
Thus,
\begin{align*}
\limsup_{x \to x_0,\, x \in \Omega}\overline H_{fW}(x) &\le \limsup_{x \to x_0,\, x \in \Omega}  \overline v(x) = (f(x_0) + \epsilon)W(x_0) + \frac{M}{\delta}(W(x_0)-\liminf_{x \to x_0} \hat{u}_{E_r}(x))\\
&=(f(x_0) + \epsilon)W(x_0),
\end{align*}
where we used Lemma \ref{lem1013sat}(e) and \eqref{eq1212mon}.
Similarly, define 
\[
\underline v:=(f(x_0)-\epsilon)W + \frac{M}{\delta}(\hat{u}_{E_r}-W).
\]
Since $\underline v\le fW$ on $\partial\Omega$, it follows that $\underline H_{fW} \ge \underline v$ in $\Omega$.
Therefore,
\begin{align*}
\liminf_{x \to x_0,\, x \in \Omega} \underline H_{fW}(x) &\ge \liminf_{x \to x_0,\, x \in \Omega} \underline v(x) = (f(x_0)-\epsilon)W(x_0)+\frac{M}{\delta}(\liminf_{x \to x_0} \hat{u}_{E_r}(x)-W(x_0))\\
&=(f(x_0)-\epsilon)W(x_0).
\end{align*}
Since $\epsilon>0$ is arbitrary, we conclude that
\[
\lim_{x \to x_0,\, x \in \Omega}\underline H_{fW}(x)=\lim_{x \to x_0,\, x \in \Omega}\overline H_{fW}(x)=(fW)(x_0).
\]
Since $f \in C(\partial\Omega)$ is arbitrary and $W$  is continuous and strictly positive on $\partial\Omega$, it follows that  $x_0$ is a regular point.
This completes the proof.
\end{proof}

\begin{theorem}[Wiener criterion]		\label{thm_wiener}
Assume that Conditions \ref{cond1} and \ref{cond2} hold.
Let $\Omega \subset \mathcal{B}''$ be an open set.
Then a point $x_0 \in \partial \Omega$ is regular if and only if
\[
\sum_{k=0}^\infty \frac{\capacity(\overline{B_{2^{-k}r_0}(x_0)}\setminus \Omega)}{(2^{-k}r_0)^{d-2}}=+\infty.
\]
\end{theorem}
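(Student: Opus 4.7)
The plan is to reduce, via Lemma \ref{lem_charact}, the question of regularity to showing that $\hat u_{K_m}(x_0) = W(x_0)$ for every $m \ge 0$, where $K_m := \overline{B_{r_m}(x_0)} \setminus \Omega$ and $r_m := 2^{-m} r_0$. Introducing the dyadic annular pieces $F_k := K_k \setminus B_{r_{k+1}}(x_0)$, one has the disjoint decomposition $K_m = \bigcup_{k \ge m} F_k$. The two series $\sum_k \capacity(F_k)/r_k^{d-2}$ and $\sum_k \capacity(K_k)/r_k^{d-2}$ diverge simultaneously: one direction follows from monotonicity $F_k \subset K_k$, the other from subadditivity (via Theorem \ref{thm_equivalence} and the classical subadditivity of Laplace capacity) combined with the geometric bound $\sum_{k \le j} r_k^{2-d} \lesssim r_j^{2-d}$. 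I therefore work with the $F_k$ series.

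For necessity, I argue by contrapositive. Assuming $\sum_k \capacity(F_k)/r_k^{d-2} < +\infty$, the representation of Theorem \ref{thm_capacity_measure} and the upper bound $G^*(x_0, y) \le N r_k^{2-d}$ for $y \in F_k$ (Theorem \ref{thm_green_function}) give
\[
\hat u_{K_m}(x_0) = \sum_{k \ge m} \int_{F_k} G^*(x_0, y)\,d\mu_{K_m}(y) \le N \sum_{k \ge m} r_k^{2-d}\,\mu_{K_m}(F_k).
\]
The remaining estimate $\mu_{K_m}(F_k) \le N \capacity(F_k)$ follows from Lemma \ref{lem2020sat}: the restricted measure $\nu := \mu_{K_m}|_{F_k}$ satisfies $\int G^*(x, y)\,d\nu(y) \le \hat u_{K_m}(x) \le W(x)$ and is therefore admissible for $F_k$, so $\int G^*(x^o, y)\,d\nu(y) \le \hat u_{F_k}(x^o)$ for any $x^o \in \partial \mathcal B'$; since $|x^o - y| \sim R_0$ for $y \in F_k$, applying Lemma \ref{lem0956fri} to both sides yields $\nu(F_k) \le N\capacity(F_k)$. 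Hence $\hat u_{K_m}(x_0) \to 0$ as $m \to \infty$, contradicting $\hat u_{K_m}(x_0) = W(x_0) \ge \gamma_0 > 0$.

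For sufficiency, assuming the Wiener series diverges, I would study the nonnegative function $v_m := W - \hat u_{K_m}$, which is an $L^*$-solution in $\mathcal{B} \setminus K_m$ and vanishes on $\intr(K_m)$. The goal is a multiplicative decay of the form
\[
\max_{\overline{B_{r_{k+1}}(x_0)}} v_m \le \Bigl(1 - c\,\tfrac{\capacity(F_k)}{r_k^{d-2}}\Bigr) \max_{\overline{B_{r_k}(x_0)}} v_m, \qquad m \le k,
\]
with $c>0$ independent of $k$ and $m$. Iterating from $k = m$ to $k = M$ and letting $M \to \infty$, the elementary fact that $\prod_k(1 - a_k) = 0$ when $\sum_k a_k = +\infty$ with $0 \le a_k \le 1$ forces $v_m(x_0) \le \lim_M \max_{\overline B_{r_M}(x_0)} v_m = 0$, i.e., $\hat u_{K_m}(x_0) = W(x_0)$.

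The main obstacle is this multiplicative decay inequality. The approach is to exploit the pointwise lower bound $\hat u_{K_m}(x) \ge \hat u_{F_k}(x) \gtrsim \capacity(F_k)/r_k^{d-2}$ on $\overline B_{r_{k+1}}(x_0)$ (the first inequality from $F_k \subset K_m$ and monotonicity of the capacitary potential, the second from the lower bound on $G^*$ in Theorem \ref{thm_green_function} together with Theorem \ref{thm_capacity_measure} applied to $F_k$ at the local scale $r_k$), which produces a proportional deficit of $v_m$ on the inner dyadic sphere relative to $W$ itself. The Harnack inequality (Theorem \ref{thm02}) on concentric dyadic shells and the strong minimum principle (Lemma \ref{lem2334sun}(a)) then propagate the deficit from a single point to all of $\overline B_{r_{k+1}}(x_0)$ and convert it into the advertised fraction of $\max_{\overline B_{r_k}(x_0)} v_m$. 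The non-constancy of $W$ is a benign perturbation owing to the uniform bounds $\gamma_0 \le W \le \gamma_0^{-1}$, so the full argument closely parallels its counterpart for the non-divergence case in \cite{DKK25a, DKK25b}.
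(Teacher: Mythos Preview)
Your necessity argument is correct and essentially matches the paper's: the paper decomposes into the nested sets $E_{2^{-k}r}$ rather than your annular pieces $F_k$, but the mechanism---restricting the capacitary measure and invoking Lemma~\ref{lem2020sat} at a far-away point---is identical. The preliminary equivalence of the $F_k$-series and the $K_k$-series via Theorem~\ref{thm_equivalence} and subadditivity of the Laplace capacity is also fine.

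The sufficiency argument, however, has a genuine gap. Your plan is to bound $v_m = W - \hat u_{K_m}$ on $\overline{B_{r_{k+1}}}$ using the monotonicity $\hat u_{K_m} \ge \hat u_{F_k}$ together with the Green's function lower bound. This yields
\[
v_m(x) \;\le\; W(x) - \hat u_{F_k}(x) \;\le\; W(x)\Bigl(1 - c\,r_k^{2-d}\capacity(F_k)\Bigr),\qquad x\in \overline{B_{r_{k+1}}},
\]
which is an \emph{absolute} bound, not a bound relative to $M_k := \sup_{\Omega\cap B_{r_k}}(v_m/W)$. It says $v_m/W \le 1 - c_k$ on the inner ball, but this holds for every $k\ge m$ independently and does not iterate: you only get $v_m(x_0)/W(x_0) \le \inf_k(1-c_k)$, not $\prod_k(1-c_k)$. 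Your appeal to the Harnack inequality cannot convert this into a fraction of $M_k$, because $v_m$ is an $L^*$-solution only in $\Omega$, and the dyadic balls and shells around $x_0\in\partial\Omega$ are not contained in $\Omega$ (they meet $K_m$), so neither Harnack nor a chaining argument is available there.

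The paper's mechanism (Lemma~\ref{lem2119thu}) is different and is what produces the multiplicative structure: one compares $v = W - u_{E_\rho}$ with the \emph{scaled} function $M(t)\,(W - \hat u_{E_{\alpha t}})$ via the comparison principle in $\Omega\cap B_t$. On $\Omega\cap\partial B_t$ one uses the \emph{upper} bound $\hat u_{E_{\alpha t}}(x)\le N_0((1-\alpha)t)^{2-d}\capacity(E_{\alpha t})$---which requires $E_{\alpha t}$ to sit well inside $B_t$, hence the small parameter $\alpha$---to get $M(t)(W-\hat u_{E_{\alpha t}}) \ge (1-Ct^{2-d}\capacity(E_{\alpha t}))\,v$; on $\partial\Omega\cap\overline{B_t}\subset E_\rho$ one uses $v=0$. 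The comparison principle then carries the inequality into $\Omega\cap B_t$, and evaluating on $\partial B_{2\alpha t}$ with the \emph{lower} bound on $\hat u_{E_{\alpha t}}$ gives $M(\alpha t)\le (1-\kappa(\alpha t)^{2-d}\capacity(E_{\alpha t}))\,M(t)$. Note that your annuli $F_k$ would not slot into this scheme directly, since $F_k$ can touch $\partial B_{r_k}$, destroying the needed upper bound on $\hat u_{F_k}$ there; the separation $E_{\alpha t}\subset\overline{B_{\alpha t}}\Subset B_t$ is essential.
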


\begin{proof}
We use the notation
\[
B_r=B_r(x_0),\quad
E_r=\overline B_r\setminus \Omega, \quad  u_r= u_{E_r},\quad \mu_r=\mu_{E_r}.
\]
We also write $G^*(x,y)$ for the Green function of $L^*$ in $\mathcal{B}$.

\medskip
\noindent
\textbf{(Necessity)}
We show that $x_0=0$ is not regular if
\begin{equation}			\label{eq0801thu}
\sum_{k=0}^\infty \frac{\capacity(E_{2^{-k}r_0})}{(2^{-k}r_0)^{d-2}}<+\infty.
\end{equation}
By Theorem \ref{thm_capacity_measure}, we have
\begin{equation}			\label{eq1353thu}
\hat u_r(x)=\int_{E_r} G^*(x, y)\,d\mu_r(y).
\end{equation}
Note that for $0<\rho<r$,
\[
\int_{E_\rho}G^*(x_0, y)\,d\mu_r(y)  \le \int_{E_r}G^*(x_0, y)\,d\mu_r(y)=\hat u_r(x_0) \le W(x_0) \le \gamma_0^{-1}.
\]
Since the sets $E_\rho$ decrease to $\{x_0\}$ as $\rho\to 0$, it follows that
\[
\lim_{\rho \to 0} \int_{E_\rho}G^*(x_0, y)\,d\mu_r(y) = \int_{\{x_0\}} G^*(x_0,y)\,d\mu_r(y) \le \gamma_0^{-1}.
\]
Because $G^*(x_0,x_0)=+\infty$, we must have $\mu_r(\{x_0\})=0$, and hence
\begin{equation}			\label{eq1445thu}
\int_{\{x_0\}} G^*(x_0,y)\,d\mu_r(y)=0.
\end{equation}
Therefore, partitioning $E_r$ into annular regions and using the Green function estimates in Theorem~\ref{thm_green_function}, we obtain from \eqref{eq1353thu} and \eqref{eq1445thu} that
\begin{align}
			\nonumber
\hat u_r(x_0) &= \sum_{k=0}^\infty \int_{E_{2^{-k}r}\setminus E_{2^{-k-1}r}} G^*(x_0, y)\,d\mu_r(y)+ \int_{\{x_0\}} G^*(x_0,y)\,d\mu_r(y)\\
			\label{eq1725sat} 
& \lesssim \sum_{k=0}^\infty(2^{-k}r)^{2-d} \mu_r(E_{2^{-k}r})
\end{align}

We claim that there exists a constant $C$ such that
\begin{equation}			\label{eq1727sat}
\mu_r(E_t) \le C \mu_t(E_t)=C \capacity(E_t), \quad \forall t \in (0,r].
\end{equation}
Assuming \eqref{eq1727sat} and taking $r=2^{-n} r_0$ in \eqref{eq1725sat}, we obtain
\[
\hat u_{2^{-n} r_0}(x_0) \lesssim \sum_{k=0}^\infty (2^{-k-n}r_0)^{2-d} \capacity(E_{2^{-k-n} r_0}) = \sum_{k=n}^\infty (2^{-k}r_0)^{2-d} \capacity(E_{2^{-k} r_0}).
\]
By assumption \eqref{eq0801thu}, the last expression tends to zero as $n \to \infty$.
Hence we may choose $n$ sufficiently large so that
\[
\hat u_{2^{-n} r_0}(x_0)<W(x_0).
\]
Lemma~\ref{lem_charact} then implies that $x_0$ is not regular.

It remains to prove \eqref{eq1727sat}.
Fix $0<t \le r$, and define a measure $\nu$ by
\[
\nu(E)=\mu_r(E_t \cap E).
\]
Then $\nu$ is supported in $E_t$.
Moreover, for every $x \in \mathcal{B}$,
\[
\int_{E_t} G^*(x,y)\,d\nu(y) =\int_{E_t} G^*(x,y)\,d\mu_r(y) \le \int_{E_r} G^*(x,y)\,d\mu_r(y)=\hat u_{E_r}(x) \le W(x).
\]
Therefore, by Lemma \ref{lem2020sat},
\[
\int_{E_t} G^*(x,y)\,d\mu_r(y) \le \hat u_{E_t}(x)=\int_{E_t} G^*(x,y)\,d\mu_t(y),\quad \forall x \in \mathcal{B}.
\]
Taking $x=x^o \in \partial \mathcal{B}'$ in the preceding inequality and using Theorem~\ref{thm_green_function}, we obtain \eqref{eq1727sat}.
This completes the proof of necessity.

\medskip
\noindent
\textbf{(Sufficiency)}
Let $\rho \in(0,r_0)$ be fixed but arbitrarily small.
The following lemma is the analogue of \cite[Lemma 6.8]{DKK25a}.

\begin{lemma}			\label{lem2119thu}
There exist constants $\alpha_0 \in (0,\frac12)$ and $\kappa>0$ such that, for every $\alpha \in (0,\alpha_0)$ and every $r\in (0, \alpha \rho]$,
\[
\sup_{\Omega\cap B_r(x_0)}
\left(1-\frac{u_{E_\rho}}{W}\right)
\le
\exp\left\{
-\kappa
\sum_{i=1}^{j}
(\alpha^i\rho)^{2-d}\capacity(E_{\alpha^i\rho})
\right\},\qquad j:=\left\lfloor \frac{\log(r/\rho)}{\log\alpha}\right\rfloor.
\]

\end{lemma}
\begin{proof}
Throughout the proof, we write
\[
v:=W -u_{E_\rho}.
\]
By Definition \ref{def_rfn}, $v$ admits the characterization
\begin{equation}			\label{eq0704sun}
v(x)=\sup\left\{w(x):w \in \mathfrak{S}^-(\mathcal{B}),\; w \le W\;\text{ in }\;\mathcal{B},\;\; w \le 0\; \text{ on }\; E_\rho\right\}.
\end{equation}
By Lemma \ref{lem1013sat}, we have
\[
0\le v \le W.
\]

Let $0<\alpha<\frac12$ and  $0<t \le \rho$.
By Theorem \ref{thm_green_function}, Theorem \ref{thm_capacity_measure}, and Lemma \ref{lem02}, for each $x \in \partial B_t(x_0)$, we have
\begin{align*}
W(x)- \hat u_{E_{\alpha t}}(x) &=W(x)-\int_{E_{\alpha t}} G^*(x,y)\,d\mu_{\alpha t}(y) \\
& \ge W(x)-N_0 ((1-\alpha)t)^{2-d} \capacity(E_{\alpha t}) \\
& \ge W(x)\left(1-N_0\gamma_0^{-1} 2^{d-2} t^{2-d} \capacity(E_{\alpha t})\right).
\end{align*}

Consequently, for $x\in \Omega \cap \partial B_{t}(x_0)$,
\begin{align}
		\nonumber
(W-\hat u_{E_{\alpha t}})(x) \cdot \sup_{\Omega \cap B_t(x_0)} (v/W) &\ge W(x)\left\{1- N_0\gamma_0^{-1}2^{d-2} t^{2-d} \capacity(E_{\alpha t})\right\} (v/W)(x),\\
		\label{eq2108fri}
& = \left\{1- N_0\gamma_0^{-1}2^{d-2} t^{2-d} \capacity(E_{\alpha t})\right\} v(x).
\end{align}
Since \eqref{eq0704sun} implies that $v = 0$ on $E_\rho$, the inequality \eqref{eq2108fri} also holds for $x \in \partial\Omega \cap \overline B_{t}(x_0)$.
By the comparison principle, we conclude that \eqref{eq2108fri} holds for all $x \in \Omega \cap B_t(x_0)$.

On the other hand, by Theorems \ref{thm_green_function} and \ref{thm_capacity_measure}, for $x \in \partial B_{2\alpha t}(x_0)$ we have
\begin{equation}			\label{eq2109fri}
\hat u_{E_{\alpha t}}(x) \ge N_0^{-1} (3\alpha t)^{2-d} \capacity(E_{\alpha t}).
\end{equation}

Applying Lemma \ref{lem1650thu}, we have
\[
1-N_0\gamma_0^{-1}2^{d-2} t^{2-d} \capacity(E_{\alpha t}) \ge 1-6^{d-2}N_0^2 \gamma_0^{-2} \alpha^{d-2}>0
\]
provided
\[\alpha<\tfrac16 (\gamma_0^2 N_0^{-2})^{1/(d-2)}.
\]
Thus, for $x \in \Omega\cap \partial B_{2\alpha t}(x_0)$, the inequalities \eqref{eq2108fri} and \eqref{eq2109fri} imply
\begin{align*}			
v(x) &\le W(x)\sup_{\Omega \cap B_t(x_0)} \frac{v}{W} \;\;\left\{ \frac{1- N_0^{-1} \gamma_0 3^{2-d}(\alpha t)^{2-d} \capacity(E_{\alpha t})}{1- N_0\gamma_0^{-1}2^{d-2} t^{2-d} \capacity(E_{\alpha t})}\right\}\\
&\le W(x)\sup_{\Omega \cap B_t(x_0)} \frac{v}{W} \;\; \left\{1- \frac{N^{-1}\gamma_0 3^{2-d}}{2}(\alpha t)^{2-d} \capacity(E_{\alpha t})\right\},
\end{align*}
where the last inequality holds whenever
\[
0<\alpha \le \tfrac16 (2N_0^2\gamma_0^{-2})^{-1/(d-2)}.
\]
Again, by \eqref{eq0704sun} and the comparison principle, the preceding inequality remains valid for every $x \in \Omega \cap B_{2\alpha t}(x_0)$.
With the notation
\[
M(r):=\sup_{\Omega \cap B_r(x_0)} \frac{v}{W},
\]
we have shown that there exist constants $\alpha_0 \in (0,\frac12)$ and $\kappa>0$ such that, for $t \in (0, \rho]$ and  $\alpha \in (0, \alpha_0)$,
\[
M(\alpha t) \le M(t) \left\{1-\kappa(\alpha t)^{2-d} \capacity(E_{\alpha t}) \right\}.
\]
Using the inequality $\ln (1-s) \le -s$ for $s <1$, we obtain
\[
M(\alpha t) \le M(t) \exp\left\{-\kappa (\alpha t)^{2-d} \capacity(E_{\alpha t}) \right\}.
\]
Iterating this estimate gives
\[
M(\alpha^j \rho) \le M(\rho) \exp\left\{-\kappa \sum_{i=1}^j (\alpha^i \rho)^{2-d} \capacity(E_{\alpha^i \rho}) \right\},\quad j=1,2,\ldots.
\]

For $r \in (0,\alpha \rho]$, choose $j \ge 1$ such that $r \in (\alpha^{j+1} \rho, \alpha^j \rho]$.
Then
\[
M(r) \le M(\alpha^j \rho) \le M(\rho) \exp\left\{-\kappa \sum_{i=1}^j (\alpha^i \rho)^{2-d} \capacity(E_{\alpha^i \rho}) \right\}.
\]
Since
\[
j=\left\lfloor \frac{\log(r/\rho)}{\log\alpha}\right\rfloor,
\]
the proof is complete.
\end{proof}

By Lemma~\ref{lem2119thu}, for every $\alpha\in(0,\alpha_0)$ we have
\[
W(x)-u_\rho(x) \le
W(x)\exp\left\{-\kappa \sum_{i=1}^{j} (\alpha^i \rho)^{2-d} \capacity(E_{\alpha^i \rho}) \right\},\qquad j=\left\lfloor \frac{\log(\abs{x-x_0}/\rho)}{\log\alpha} \right\rfloor,
\]
for all $x\in\Omega\cap B_{\alpha\rho}(x_0)$.
Thus, for all sufficiently small $r>0$, we obtain
\[
\limsup_{x \rightarrow x_0} \left(W(x)-\hat u_r(x)\right)=0.
\] 
Since $W-\hat u_r \ge 0$ and $W$ is continuous in $\mathcal{B}$, it follows that
\[
\liminf_{x \rightarrow x_0} \hat u_r(x)=W(x_0).
\]
Consequently, by Lemma \ref{lem2334sun}(c), we have
\[
\hat u_{E_r}(x_0)= W(x_0).
\]
Therefore, Lemma \ref{lem_charact} implies that $x_0$ is regular.
This completes the proof of sufficiency, and hence the proof of the theorem.
\end{proof}

The following theorem is an immediate consequence of Theorems \ref{thm_wiener} and \ref{thm_equivalence}.

\begin{theorem}		\label{thm0800sat}
Assume that Conditions \ref{cond1} and \ref{cond2} hold.
Let $\Omega$ be a bounded open subset of $\mathbb R^d$, $d \ge 3$.
Then a point $x_0\in\partial\Omega$ is regular with respect to $L^*$ if and only if it is regular with respect to the Laplacian.
\end{theorem}

\begin{definition}\label{def_reg}
We define a domain $\Omega$ as a regular domain if every point on its boundary $\partial \Omega$ is a regular point with respect to the Laplacian.
\end{definition}

\begin{theorem}		\label{thm0802sat}
Assume that Conditions \ref{cond1} and \ref{cond2} hold.
Let $\Omega$ be a bounded regular domain in $\mathbb R^d$, $d \ge 3$.
Then, for every $f\in C(\partial\Omega)$, the Dirichlet problem
\[
L^*u=0 \;\text{ in }\;\Omega, \qquad u=f \;\text{ on }\;\partial\Omega
\]
has a unique solution $u\in C(\overline\Omega)$.
\end{theorem}

\section{Wiener criterion: $d = 2$}			\label{sec_2d}

In the two-dimensional setting, it is convenient to consider the capacitary potential of $E$ relative to a domain $\mathcal D\subset\mathcal B$, which is not fixed in advance.
In our application, $\mathcal D$ will also be a ball. We denote by $G_{\mathcal D}^*(x,y)$ the Green function for the operator $L^*$ in $\mathcal D$.

\begin{definition}			\label{def_rfn2d}
For $E \Subset \mathcal{D}$, define
\[
u_{E,  \mathcal{D}}(x):=\inf \,\left\{v(x):  v \in \mathfrak{S}^+(\mathcal{D}),\; v\ge 0\text{ in }\mathcal{D},\; v \ge W \text{ in }E\right\},\quad x \in \mathcal{D}.
\]
The lower semicontinuous regularization $\hat u_{E,\mathcal{D}}$ is called the capacitary potential of $E$ relative to $\mathcal{D}$.
\end{definition}

\begin{lemma}				\label{lem1112mon} 
The following properties hold:
\begin{enumerate}[leftmargin=*, label=(\alph*)]
\item		\label{item_a}
$0\le \hat u_{E,\mathcal{D}} \le u_{E,\mathcal{D}} \le W$.
\item
$u_{E,\mathcal{D}}=W$ on $E$ and $\hat u_{E,\mathcal{D}} =W$ in $\intr (E)$.
\item
$\hat u_{E,\mathcal{D}}$ is a potential in $\mathcal{D}$;
that is, it is a nonnegative $L^*$-supersolution in $\mathcal{D}$, finite at every point of $\mathcal{D}$, and vanishes continuously on $\partial \mathcal{D}$.
\item
$u_{E,\mathcal{D}}=\hat u_{E,\mathcal{D}}$ in $\mathcal{D}\setminus \overline{E}$, and
$L^* u_{E,\mathcal{D}}=L^* \hat u_{E,\mathcal{D}}=0$ in $\mathcal{D}\setminus \overline{E}$.
\item		\label{item_e}
If $x_0 \in \partial E$, then
\[
\liminf_{x\to x_0} \hat u_{E,\mathcal{D}}(x) =\liminf_{x\to x_0,\, x\in \mathcal{D} \setminus  E} \hat u_{E,\mathcal{D}}(x).
\]
\item		\label{item_f}
If $E_1 \subset E_2 \subset \mathcal{D}$, then
\[\hat u_{E_1,\mathcal{D}} \le \hat u_{E_2,\mathcal{D}}.\]
\item		\label{item_g}
If $E\subset\mathcal D_1\subset\mathcal D_2$, then
\[\hat u_{E,\mathcal{D}_1} \le \hat u_{E,\mathcal{D}_2}.\]
\end{enumerate}
\end{lemma}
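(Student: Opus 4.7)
The plan is to prove (a)--(e) by adapting the proof of Lemma \ref{lem1013sat} (cf.\ \cite[Lemma 5.6]{DKK25a}) almost verbatim, with the ambient ball $\mathcal{B}$ replaced throughout by the subdomain $\mathcal{D}$, and to verify the two new monotonicity statements (f) and (g) directly from the variational characterization. The key observation is that the function $W$ from Lemma \ref{lem02} is continuous on $\overline{\mathcal{B}} \supset \overline{\mathcal{D}}$ and satisfies $L^*W=0$ in $\mathcal{B}$, so its restriction to $\mathcal{D}$ remains a positive $L^*$-supersolution with $\gamma_0 \le W \le \gamma_0^{-1}$. Hence $W$ is an admissible competitor in Definition \ref{def_rfn2d}, which gives the upper bound $u_{E,\mathcal{D}} \le W$ in (a); the nonnegativity of $u_{E,\mathcal{D}}$ and of $\hat{u}_{E,\mathcal{D}}$ and the inequality $\hat{u}_{E,\mathcal{D}} \le u_{E,\mathcal{D}}$ follow straight from the definitions. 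Part (b) is immediate because the defining constraint forces $v \ge W$ on $E$, combined with the continuity of $W$ on $\intr(E)$. Parts (d) and (e) follow exactly as in \cite[Lemma 5.6]{DKK25a}, using Lemma \ref{lem1033thu} (lowering) and Lemma \ref{lem1034thu} (infima of saturated families) to verify that $u_{E,\mathcal{D}}$ is an $L^*$-solution on $\mathcal{D} \setminus \overline{E}$, and combining with (b) for the behavior across $\partial E$.

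The step that genuinely requires new input, and that I anticipate being the main technical obstacle, is the boundary decay $\hat{u}_{E,\mathcal{D}}(x) \to 0$ as $x \to \partial \mathcal{D}$ in (c), because $W$ itself does \emph{not} vanish on $\partial \mathcal{D}$. To handle this I would exhibit an explicit admissible competitor that does vanish on $\partial \mathcal{D}$: define
\[
v(x) := C \int_E G^*_{\mathcal{D}}(x,y)\,dy,
\]
where $G^*_{\mathcal{D}}$ is the Green's function for $L^*$ in $\mathcal{D}$. Since $E \Subset \mathcal{D}$, the logarithmic lower bound in Theorem \ref{thm_green_function2d} shows that $\int_E G^*_{\mathcal{D}}(x,y)\,dy$ is bounded below on $E$ by a positive constant, so $C$ can be chosen large enough that $v \ge W$ on $E$; meanwhile, $G^*_{\mathcal{D}}(\cdot, y)$ vanishes on $\partial \mathcal{D}$ uniformly for $y \in E$, forcing $v(x) \to 0$ as $x \to \partial \mathcal{D}$. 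Because $v$ is a nonnegative $L^*$-supersolution in $\mathcal{D}$ that majorizes $W$ on $E$, it is admissible in Definition \ref{def_rfn2d}; hence $\hat{u}_{E,\mathcal{D}} \le u_{E,\mathcal{D}} \le v$ and the boundary decay of $v$ transfers to $\hat{u}_{E,\mathcal{D}}$.

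Once the boundary decay is in hand, (f) and (g) are essentially tautological. For (f), any $v$ admissible in the infimum defining $u_{E_2,\mathcal{D}}$ satisfies $v \ge W$ on $E_2 \supset E_1$, hence is also admissible for $u_{E_1,\mathcal{D}}$; taking infima yields $u_{E_1,\mathcal{D}} \le u_{E_2,\mathcal{D}}$, and passing to the lower-semicontinuous regularization preserves the inequality. For (g), given $E \subset \mathcal{D}_1 \subset \mathcal{D}_2$, the restriction to $\mathcal{D}_1$ of any admissible competitor $v$ for $u_{E,\mathcal{D}_2}$ is still an $L^*$-supersolution in $\mathcal{D}_1$, nonnegative, and $\ge W$ on $E \subset \mathcal{D}_1$, hence admissible for $u_{E,\mathcal{D}_1}$; this gives the inequality pointwise on $\mathcal{D}_1$, which again passes to regularizations. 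Thus, apart from the $\partial\mathcal{D}$-vanishing in (c), every assertion reduces either to a direct repetition of the argument in the three-dimensional case or to a routine comparison of admissible classes.
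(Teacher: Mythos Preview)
Your proposal is correct and matches the paper's approach: for (a)--(e) the paper simply refers back to Lemma \ref{lem1013sat} (hence to \cite[Lemma 5.6]{DKK25a}), and for (f), (g) it appeals to Definition \ref{def_rfn2d} together with the comparison principle, which is exactly your restriction-of-competitors argument. One minor point in your Green's-function competitor for (c): if $\abs{E}=0$ then $\int_E G^*_{\mathcal D}(x,y)\,dy \equiv 0$ and the construction fails as written, so integrate instead over a closed ball $\overline B$ with $E \subset \overline B \Subset \mathcal D$; the resulting function is continuous and strictly positive on $\overline B$, hence after scaling it majorizes $W$ on $E$ and still vanishes on $\partial \mathcal D$.
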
 

\begin{proof}
The proof of \ref{item_a} -- \ref{item_e} is the same as that of Lemma \ref{lem1013sat}.
Properties \ref{item_f} and \ref{item_g} follow directly from Definition \ref{def_rfn2d} and the comparison principle.
\end{proof}

The following lemma gives a convenient characterization of a regular point in terms of relative capacitary potentials.

\begin{lemma}		\label{lem_charact2d}
Let $\Omega \Subset \mathcal{B}$ and let $x_0 \in \partial \Omega$.
Denote
\[
B_r=B_r(x_0), \quad E_r = \overline{B_r}\setminus\Omega.
\]
Then, $x_0$ is regular if and only if
\[
\hat{u}_{E_r,B_{4r}}(x_0)=W(x_0)
\]
for every $r$ satisfying $0<r<r_0:=\tfrac{1}{4}\dist(x_0,\partial\mathcal{B})$.
\end{lemma}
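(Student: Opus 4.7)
The plan is to parallel the proof of Lemma \ref{lem_charact}, with one crucial adjustment: because $\hat u_{E_r, B_{4r}}$ lives only in $B_{4r}$ rather than in the ambient ball $\mathcal B$, we combine the argument with Lemma \ref{lem1629mon} (regularity is a local property) and, for sufficiency, with the pasting lemma (Lemma \ref{lem2334sun}(e)). In both directions we pass to $\Omega' = \Omega \cap B_{4\rho}$ for an appropriate $\rho$, inside which $\hat u_{E_r, B_{4r}}$ takes over the role that $\hat u_{E_r}$ played in $d \ge 3$.

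For necessity, if $\hat u_{E_r, B_{4r}}(x_0) < W(x_0)$ for some $r \in (0, r_0)$, set $\Omega' = \Omega \cap B_{4r}$ and take $f(x) = (1 - |x - x_0|/r)_+ W(x)$ as continuous boundary data on $\partial \Omega'$, so that $f(x_0) = W(x_0)$ and $f \equiv 0$ outside $\overline{B_r}$. For any $v$ in the class defining $u_{E_r, B_{4r}}$, the restriction $v|_{\Omega'}$ is an $L^*$-supersolution on $\Omega'$ with $\liminf v \ge f$ on $\partial \Omega'$ (using $v \ge W$ on $\partial \Omega \cap \overline{B_r} \subset E_r$, and $v \ge 0 = f$ on the rest of $\partial \Omega'$, which includes the piece on $\partial B_{4r}$). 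Hence $\underline H^{\Omega'}_f \le \overline H^{\Omega'}_f \le v|_{\Omega'}$, and the infimum over $v$ gives $\underline H^{\Omega'}_f \le \hat u_{E_r, B_{4r}}$ on $\Omega' \subset B_{4r} \setminus \overline{E_r}$ by Lemma \ref{lem1112mon}(d). Since $B_{4r} \setminus E_r$ coincides with $\Omega'$ in a neighborhood of $x_0$, Lemma \ref{lem1112mon}(e) together with Lemma \ref{lem2334sun}(c) yield
\[
\liminf_{x \to x_0,\, x \in \Omega'} \underline H^{\Omega'}_f(x) \le \hat u_{E_r, B_{4r}}(x_0) < W(x_0) = f(x_0),
\]
so $x_0$ is not regular for $\Omega'$; by Lemma \ref{lem1629mon}, it is not regular for $\Omega$.

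For sufficiency, assume $\hat u_{E_r, B_{4r}}(x_0) = W(x_0)$ for every $r \in (0, r_0)$, which by Lemma \ref{lem2334sun}(c) gives $\liminf_{x \to x_0} \hat u_{E_r, B_{4r}}(x) = W(x_0)$. Fix $r_* \in (0, r_0)$, set $\Omega' = \Omega \cap B_{4r_*}$, and by Lemma \ref{lem1629mon} it suffices to establish regularity at $x_0$ for $\Omega'$. Given $g \in C(\partial \Omega')$ and $\epsilon > 0$, choose $r \in (0, r_*]$ with $|g - g(x_0)| < \epsilon$ on $\partial \Omega' \cap \overline{B_{2r}}$, and apply Lemma \ref{lem03} to $\hat u_{E_r, B_{4r}}/W$ on the annular region $B_{4r} \setminus \overline{B_{3r/2}}$ to obtain $\delta \in (0,1)$ with $\hat u_{E_r, B_{4r}} \le (1-\delta) W$ on $\overline{B_{4r}} \setminus B_{2r}$. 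With $M = \sup_{\partial \Omega'}|g - g(x_0)|$ and $C = g(x_0) + M$, define
\[
\overline v := (g(x_0) + \epsilon) W + (M/\delta)\bigl(W - \hat u_{E_r, B_{4r}}\bigr) \quad\text{on } B_{4r},
\]
and paste to $\Omega'$ by setting $\overline w := \min(CW, \overline v)$ on $\Omega' \cap B_{4r}$ and $\overline w := CW$ on $\Omega' \setminus B_{4r}$. Since $\hat u_{E_r, B_{4r}}$ vanishes continuously on $\partial B_{4r}$, $\overline v \to (g(x_0) + \epsilon + M/\delta) W \ge CW$ at the interface, so $\overline w$ is continuous (hence LSC) across $\Omega' \cap \partial B_{4r}$, and Lemma \ref{lem2334sun}(e) places $\overline w \in \mathfrak{S}^+(\Omega')$. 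A case check on the three pieces of $\partial \Omega'$ (namely $\partial \Omega \cap \overline{B_{2r}}$, $\partial \Omega \cap (\overline{B_{4r}} \setminus \overline{B_{2r}})$, and $\partial \Omega' \setminus B_{4r}$) gives $\overline w \ge gW$ on $\partial \Omega'$, so $\overline H^{\Omega'}_{gW} \le \overline w$; combined with $\liminf \hat u_{E_r, B_{4r}} = W(x_0)$ at $x_0$, this gives $\limsup_{x \to x_0} \overline H^{\Omega'}_{gW}(x) \le (g(x_0) + \epsilon) W(x_0)$. The matching lower bound on $\underline H^{\Omega'}_{gW}$ follows by applying the same estimate to $-g$ and invoking \eqref{eq1208thu}, and letting $\epsilon \to 0$ yields $\lim \overline H^{\Omega'}_{gW} = \lim \underline H^{\Omega'}_{gW} = (gW)(x_0)$; because $W$ is continuous and positive, this is equivalent to regularity with data $g$.

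The main obstacle is calibrating $C$ in the pasting: it must both dominate $g$ on $\partial \Omega' \setminus B_{4r}$ and satisfy $C \le g(x_0) + \epsilon + M/\delta$ so that $\min(CW, \overline v)$ remains continuous across $\partial B_{4r}$. The choice $C = g(x_0) + M$ works precisely because $\delta < 1$ forces $M/\delta \ge M$. No such pasting is needed in $d \ge 3$, where $\hat u_{E_r}$ is globally defined on $\mathcal{B}$ and $\overline v$ extends to $\Omega$ directly.
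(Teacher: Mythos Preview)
Your necessity argument matches the paper's exactly.

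For sufficiency you take a genuinely different route. The paper makes a one-line reduction: by Lemma~\ref{lem1112mon}\ref{item_g} (monotonicity in the outer domain), the hypothesis $\hat u_{E_r,B_{4r}}(x_0)=W(x_0)$ immediately forces
\[
W(x_0)=\hat u_{E_r,B_{4r}}(x_0)\le \hat u_{E_r,\mathcal B}(x_0)\le W(x_0),
\]
so $\hat u_{E_r,\mathcal B}(x_0)=W(x_0)$ for all small $r$, and the sufficiency argument of Lemma~\ref{lem_charact} applies verbatim on $\mathcal B$. No pasting, no auxiliary domain $\Omega'$, no calibration of $C$ is needed. You instead stay inside $B_{4r}$, build the comparison function $\overline v$ there, and extend it to $\Omega'=\Omega\cap B_{4r_*}$ via the pasting lemma with $CW$; the choice $C=g(x_0)+M$ together with $\delta<1$ makes the paste continuous across $\partial B_{4r}$, and your boundary case check is correct (the three pieces do cover $\partial\Omega'$ since $\partial\Omega'\cap B_{4r}=\partial\Omega\cap B_{4r}$). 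Your approach is self-contained and would survive even if Lemma~\ref{lem_charact} had not been proved first; the paper's approach is much shorter but leans on that earlier lemma and on the monotonicity property you did not invoke.
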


\begin{proof}
\noindent
\textbf{(Necessity)}
Suppose that $\hat{u}_{E_r,B_{4r}}(x_0)<W(x_0)$ for some $r \in (0,r_0)$.
Define
\[
f(x):=\left(1-\abs{x-x_0}/r\right)_+W(x),\qquad t_+:=\max(t,0).
\]
Let $u$ be the lower Perron solution of the Dirichlet problem
\[
L^*u=0\quad\text{in }\;\Omega\cap B_{4r},\qquad  u=f\quad\text{on }\;\partial(\Omega\cap B_{4r}).
\]
By Definition \ref{def_rfn2d} and Lemma \ref{lem1112mon}(d), we have
\[
u \le u_{E_r, B_{4r}}=\hat u_{E_r, B_{4r}}\quad\text{in }\;\Omega\cap B_{4r}.
\]
This contradicts the regularity of $x_0$ with respect to $\Omega\cap B_{4r}$, since
\[
\liminf_{x\to x_0,\,x\in \Omega\cap B_{4r}}\ u(x) \le  \liminf_{x\to x_0}\, \hat u_{E_r, B_{4r}}(x)= \hat u_{E_r, B_{4r}}(x_0) < W(x_0)=f(x_0),
\]
where we used Lemma \ref{lem1112mon}(e) and Lemma \ref{lem2334sun}(c).
Thus, by Lemma \ref{lem1629mon}, the point $x_0$ is not regular.
 
\medskip
\noindent
\textbf{(Sufficiency)}
Suppose that $\hat{u}_{E_r,B_{4r}}(x_0)=W(x_0)$ for every $r\in(0,r_0)$.
By Lemma \ref{lem1112mon}(g), we have
\[
W(x_0)= \hat{u}_{E_r,B_{4r}}(x_0) \leq \hat{u}_{E_r,\mathcal{B}}(x_0) \leq W(x_0).
\]
Hence $\hat{u}_{E_r,\mathcal{B}}(x_0)=W(x_0)$ for every $r \in (0,r_0)$.
The remainder of the proof is the same as the sufficiency part of the proof of Lemma~\ref{lem_charact}.
\end{proof}

\begin{theorem}			\label{thm1639mon}
Let $K$ be a compact subset of $\mathcal{D}$.
Then there exists a Borel measure $\mu_{K, \mathcal{D}}$, called the capacitary measure of $K$ relative to $\mathcal{D}$, supported in $\partial K$, such that
\[
\hat u_{K,\mathcal{D}}(x)=\int_{K} G_{\mathcal{D}}^*(x, y)\,d\mu_{K, \mathcal{D}} (y), \qquad x \in \mathcal{D}.
\]
\end{theorem}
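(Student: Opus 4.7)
The plan is to mirror the argument used in the proof of Theorem~\ref{thm_capacity_measure}, with $\mathcal{D}$ now playing the role of $\mathcal{B}$ and with the logarithmic bound of Theorem~\ref{thm_green_function2d} replacing the power-law bound of Theorem~\ref{thm_green_function}. By Lemma~\ref{lem1112mon}(c), $\hat u_{K,\mathcal{D}}$ is a potential in $\mathcal{D}$, so applying Proposition~\ref{prop0800tue} (with $\mathcal{D}$ in place of $\mathcal{B}$) produces a family of Radon measures $\{\nu^x\}_{x \in \mathcal{D}}$ on $\mathcal{D}$ such that $\nu^x(\mathcal{D}') = \mathbb{P}_{\mathcal{D}'}\hat u_{K,\mathcal{D}}(x)$ for every open $\mathcal{D}' \subset \mathcal{D}$, and in particular $\nu^x(\mathcal{D}) = \hat u_{K,\mathcal{D}}(x)$.

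First I would verify that $\supp \nu^x \subset \partial K$ by a purely local argument. If $y_0 \in \intr(K)$, Lemma~\ref{lem1112mon}(b) yields $\hat u_{K,\mathcal{D}} = W$ in a ball $B_r(y_0) \Subset \intr(K)$, and since $L^*W = 0$ in $\mathcal{B}$ we get $L^*\hat u_{K,\mathcal{D}} = 0$ there; if $y_0 \in \mathcal{D}\setminus K$, Lemma~\ref{lem1112mon}(d) similarly gives $L^*\hat u_{K,\mathcal{D}} = 0$ near $y_0$. In either case, taking $v \equiv 0$ in Lemma~\ref{lem0800sun}(d) forces $\mathbb{P}_{B_r(y_0)}\hat u_{K,\mathcal{D}} \equiv 0$, and consequently $\nu^x(B_r(y_0)) = 0$.

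The crux is the two-dimensional analogue of Lemma~\ref{lem40}: for each $y \in \mathcal{D}$ and $x_0 \in \mathcal{D}\setminus\{y\}$ with $\nu^{x_0}(B_r(y))>0$ for all small $r$, the normalized lowerings
\[
v_r(x) := \frac{\mathbb{P}_{B_r(y)}\hat u_{K,\mathcal{D}}(x)}{\mathbb{P}_{B_r(y)}\hat u_{K,\mathcal{D}}(x_0)}
\]
should converge locally uniformly on $\mathcal{D}\setminus\{x_0\}$ to $G^*_{\mathcal{D}}(\,\cdot\,,y)/G^*_{\mathcal{D}}(x_0,y)$. I would follow the script of Lemma~\ref{lem40}: use Harnack (Theorem~\ref{thm02}) to extract a subsequential limit $v$, verify that $v$ is a nonnegative $L^*$-solution on $\mathcal{D}\setminus\{y\}$ vanishing on $\partial\mathcal{D}$ with $v(x_0) = 1$, set $k := \sup\{\alpha \ge 0 : v \ge \alpha G^*_{\mathcal{D}}(\,\cdot\,,y) \text{ on } \mathcal{D}\setminus\{y\}\}$, and rule out $v > kG^*_{\mathcal{D}}(\,\cdot\,,y)$ by iterating the comparison principle and a Harnack chain on spheres $\partial B_\rho(y)$ for small $\rho$. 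Once the limit identity is established, the measure
\[
d\mu(y) := \frac{d\nu^x(y)}{G^*_{\mathcal{D}}(x,y)}
\]
is independent of $x$, supported in $\partial K$, and yields $\hat u_{K,\mathcal{D}}(x) = \int_{\partial K} G^*_{\mathcal{D}}(x,y)\, d\mu(y)$ as required.

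The main obstacle I anticipate is the localized character of the Green's function estimate \eqref{estimate_green}: unlike in dimension $d\ge 3$, the bounds hold only on balls $\overline{B_r}$ with $B_{4r}\subset\mathcal{B}$, and $G^*_{\mathcal{D}}(\,\cdot\,,y)$ has no global power-law control near $y$. I would handle this by choosing the Harnack annuli $B_{2\rho}(y)\setminus B_{\rho/2}(y)$ with $B_{4\rho}(y)\Subset\mathcal{D}$, so that Theorem~\ref{thm_green_function2d} supplies two-sided bounds on $\partial B_\rho(y)$ that are comparable up to a multiplicative constant depending only on structural parameters, making the Harnack chain step go through. The finiteness of $k$, which was previously ensured by the power-law upper bound, is instead guaranteed by the logarithmic upper bound together with the positive lower bound of $G^*_{\mathcal{D}}(\,\cdot\,,y)$ on any compact set at positive distance from $y$ and $\partial\mathcal{D}$.
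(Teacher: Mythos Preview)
Your proposal is correct and follows essentially the same approach as the paper: the paper's proof is simply ``Refer to the proof of Theorem~\ref{thm_capacity_measure},'' and you have accurately outlined how that argument (including the analogue of Lemma~\ref{lem40}) transplants to $\mathcal{D}$ with the two-dimensional Green's function bounds of Theorem~\ref{thm_green_function2d} in place of Theorem~\ref{thm_green_function}. Your discussion of the localized nature of the logarithmic estimate and how to handle the finiteness of $k$ goes beyond what the paper spells out, but is in the right spirit.
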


\begin{proof}
The proof is the same as that of Theorem \ref{thm_capacity_measure}.
\end{proof}

\begin{definition}
Let $K$ be a compact subset of $\mathcal{D}$.
The capacity of $K$ relative to $\mathcal{D}$ is defined by
\[
\capacity(K,\mathcal{D}):=\mu_{K,\mathcal{D}}(K).
\]
\end{definition}

\begin{lemma}		\label{lem1125mon}
Let $K$ be a compact subset of $\mathcal{D}$, and let $\mathfrak{M}_{K,\mathcal{D}}$ be the set of all Borel measures $\nu$ supported on $K$ such that
\[
\int_K G_{\mathcal{D}}^* (x,y)\,d\nu(y) \le W(x),\qquad x\in\mathcal D.
\]
Then
\[
\hat u_{K,\mathcal{D}}(x)=\sup_{\nu \in \mathfrak{M}_{K,\mathcal{D}}} \int_K G_{\mathcal{D}}^*(x,y)\,d\nu(y),\qquad x\in\mathcal D.
\]
\end{lemma}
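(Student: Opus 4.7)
\smallskip
\noindent\textbf{Proof plan for Lemma \ref{lem1125mon}.}
The plan is to mirror the proof of Lemma~\ref{lem2020sat}, replacing the global Green's function and capacitary potential on $\mathcal{B}$ by the relative ones on $\mathcal{D}$, and invoking Theorem~\ref{thm1639mon} in place of Theorem~\ref{thm_capacity_measure}. Throughout, write $g_\nu(x):=\int_K G^*_{\mathcal{D}}(x,y)\,d\nu(y)$ for $\nu\in\mathfrak{M}_{K,\mathcal{D}}$.

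For the inequality $\hat u_{K,\mathcal{D}}\le \sup_{\nu}g_\nu$, I would observe that Theorem~\ref{thm1639mon} produces a Borel measure $\mu_{K,\mathcal{D}}$, supported in $\partial K\subset K$, with
\[
\hat u_{K,\mathcal{D}}(x)=\int_K G^*_{\mathcal{D}}(x,y)\,d\mu_{K,\mathcal{D}}(y),\quad x\in\mathcal{D}.
\]
Since Lemma~\ref{lem1112mon}\ref{item_a} gives $\hat u_{K,\mathcal{D}}\le W$ in $\mathcal{D}$, the measure $\mu_{K,\mathcal{D}}$ belongs to $\mathfrak{M}_{K,\mathcal{D}}$, so the asserted $\le$ is immediate.

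For the reverse inequality, fix $\nu\in\mathfrak{M}_{K,\mathcal{D}}$ and let $v\in\mathfrak{S}^+(\mathcal{D})$ be any admissible competitor in Definition~\ref{def_rfn2d}, i.e.\ $v\ge 0$ in $\mathcal{D}$ and $v\ge W$ in $K$. Since $g_\nu\le W$ in $\mathcal{D}$, we have $g_\nu\le W\le v$ on $K$, and in particular $g_\nu\le v$ on $\partial K$. Because $g_\nu$ is an $L^*$-solution in $\mathcal{D}\setminus K$ vanishing continuously on $\partial\mathcal{D}$, while $v$ is an $L^*$-supersolution in $\mathcal{D}$ with $\liminf$ at $\partial\mathcal{D}$ at least $0$, the comparison principle (Lemma~\ref{lem2334sun}(b)) applied to $v-g_\nu$ in $\mathcal{D}\setminus K$ gives $v\ge g_\nu$ in $\mathcal{D}\setminus K$, and hence in all of $\mathcal{D}$. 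Taking the infimum over such $v$ yields
\[
u_{K,\mathcal{D}}(x)\ge g_\nu(x),\quad x\in \mathcal{D}.
\]

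Finally, to pass from $u_{K,\mathcal{D}}$ to its lower semicontinuous regularization $\hat u_{K,\mathcal{D}}$, I would use lower semicontinuity of $g_\nu$ together with Fatou's lemma: for every $x\in\mathcal{D}$,
\[
\hat u_{K,\mathcal{D}}(x)=\sup_{r>0}\Bigl(\inf_{z\in B_r(x)\cap\mathcal{D}} u_{K,\mathcal{D}}(z)\Bigr)
\ge \liminf_{z\to x}\int_K G^*_{\mathcal{D}}(z,y)\,d\nu(y)
\ge \int_K G^*_{\mathcal{D}}(x,y)\,d\nu(y)=g_\nu(x).
\]
Taking the supremum over $\nu\in\mathfrak{M}_{K,\mathcal{D}}$ completes the argument. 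The only non-routine point is the application of the comparison principle to $v-g_\nu$ on the (possibly irregular) open set $\mathcal{D}\setminus K$, but this is exactly the step handled in Lemma~\ref{lem2020sat} and requires only that the boundary data on $\partial(\mathcal{D}\setminus K)\subset \partial K\cup\partial\mathcal{D}$ be controlled, which is what the inequalities $g_\nu\le W$ and $g_\nu=0$ on $\partial\mathcal{D}$ provide.
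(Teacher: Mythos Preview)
Your proposal is correct and follows exactly the approach indicated in the paper, which simply refers back to the proof of Lemma~\ref{lem2020sat}. Each step you outline---the $\le$ direction via Theorem~\ref{thm1639mon} and Lemma~\ref{lem1112mon}\ref{item_a}, the comparison argument on $\mathcal{D}\setminus K$, and the passage to the lower semicontinuous regularization via Fatou---matches the corresponding step in that proof with $\mathcal{B}$ replaced by $\mathcal{D}$.
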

\begin{proof}
The proof is the same as that of Lemma \ref{lem2020sat}.
\end{proof}

\begin{theorem}			\label{thm1641mon}
Let $B_{4r}=B_{4r}(x_0) \subset \mathcal{B}$, and let $K \subset \overline B_r$ be compact.
Denote by $\capacity^{\Delta}(K,B_{4r})$ the capacity of $K$ relative to $B_{4r}$ associated with the Laplacian.
Then there exists a constant $C>0$, depending only on $\lambda$, $\Lambda$, $\omega_{\mathbf A}$, and $R_0$, such that
\[
C^{-1} \capacity^{\Delta}(K, B_{4r}) \le \capacity(K, B_{4r}) \le C \capacity^{\Delta}(K, B_{4r}).
\]
\end{theorem}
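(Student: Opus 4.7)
The plan is to adapt the proof of Theorem \ref{thm_equivalence} to the relative two-dimensional setting, with the balls $\mathcal{B}$ and $\mathcal{B}'$ replaced by $B_{4r}$ and $B_{2r}$. The main new technical issue is that Theorem \ref{thm_green_function2d} controls $G^*_{B_{4r}}(x,y)$ only for $x,y\in\overline{B_r}$, whereas the natural evaluation point in the final step lies outside $\overline{B_r}$; a Harnack chain based on Theorem \ref{thm02} will bridge this gap.

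First, I would apply Theorem \ref{thm_green_function2d} to both $L^*$ and the Laplace operator (which trivially satisfies Conditions \ref{cond1} and \ref{cond2}) to obtain a constant $\alpha\in(0,1)$, depending only on the usual parameters, such that $\alpha G^\Delta_{B_{4r}}(x,y)\le G^*_{B_{4r}}(x,y)\le\alpha^{-1}G^\Delta_{B_{4r}}(x,y)$ for all $x\ne y$ in $\overline{B_r}$. Next I would reproduce the scaling argument from the proof of Theorem \ref{thm_equivalence}: given any admissible measure $\nu^\Delta$ for the Laplacian relative capacity (supported in $K$ with $\int_K G^\Delta_{B_{4r}}(x,\cdot)\,d\nu^\Delta\le 1$ on $B_{4r}$), set $\tilde\nu:=\gamma_0\alpha\,\nu^\Delta$ and $u(x):=\int_K G^*_{B_{4r}}(x,\cdot)\,d\tilde\nu$. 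The upper Green's function comparison yields $u\le\gamma_0\le W$ on $\overline{B_r}$, and the comparison principle (Lemma \ref{lem2334sun}(b)), applied on $B_{4r}\setminus\overline{B_r}$ where $u$ is an $L^*$-solution vanishing on $\partial B_{4r}$, extends this to $u\le W$ on all of $B_{4r}$. Hence $\tilde\nu\in\mathfrak M_{K,B_{4r}}$, and Lemma \ref{lem1125mon} yields $u\le\hat u_{K,B_{4r}}$. Combining this with the lower Green's function bound and taking the supremum over $\nu^\Delta$ produces $\gamma_0\alpha^2\,\hat u^\Delta_{K,B_{4r}}(x)\le\hat u_{K,B_{4r}}(x)$ for $x\in\overline{B_r}$; the reverse inequality follows by interchanging the roles of $L^*$ and $\Delta$ (noting that one may take $W\equiv 1$ for the Laplacian).

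To convert this potential comparison into the stated capacity comparison, I would pick $x^o\in\partial B_{2r}(x_0)$ and show that $G^*_{B_{4r}}(x^o,y)$ and $G^\Delta_{B_{4r}}(x^o,y)$ are each bounded between positive constants uniformly in $y\in\overline{B_r}$. For each such $y$, choose $z=z(y)\in\overline{B_r}$ with $|z-y|=r/2$; Theorem \ref{thm_green_function2d} gives positive-constant upper and lower bounds on $G^*_{B_{4r}}(z,y)$, and a Harnack chain of uniformly bounded length, linking $z$ to $x^o$ inside $B_{4r}\setminus\overline{B_{r/4}(y)}$, transports these bounds to $x^o$. Using the integral representation from Theorem \ref{thm1639mon}, we then deduce that $\hat u_{K,B_{4r}}(x^o)$ is comparable to $\capacity(K,B_{4r})$, and likewise $\hat u^\Delta_{K,B_{4r}}(x^o)$ is comparable to $\capacity^\Delta(K,B_{4r})$. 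A final Harnack chain on the potentials themselves through the annulus $B_{4r}\setminus\overline{B_r}\subset B_{4r}\setminus K$ extends the potential comparison from $\overline{B_r}$ to $x^o$, from which the claimed two-sided bound on the capacities follows.

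The main obstacle is this last step: in the $d\ge 3$ case the evaluation point $x^o\in\partial\mathcal B'$ lies inside the region $\overline{\mathcal B'}$ where Theorem \ref{thm_green_function} directly provides two-sided Green's function bounds, but here Theorem \ref{thm_green_function2d} is confined to $\overline{B_r}$, so everything must be transported outward by Harnack chains. The critical check is that the chain lengths and resulting constants are scale-invariant (independent of $r$) and also independent of the location of $K$ inside $\overline{B_r}$; the tightest case is $K=\overline{B_r}$, where both the Green's function chains and the potential chain must run entirely in the thin annulus $B_{4r}\setminus\overline{B_r}$.
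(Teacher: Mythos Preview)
Your approach is essentially correct and matches the paper's brief instruction to reproduce the proof of Theorem~\ref{thm_equivalence} with Theorem~\ref{thm_green_function2d} supplying the Green's function bounds. You have correctly isolated the one new difficulty: the estimate~\eqref{estimate_green} is stated only for $x,y\in\overline{B_r}$, so one cannot simply evaluate at a point of $\overline{B_r}$ that is uniformly far from $K$ (there may be no such point). Your Harnack-chain argument for the Green's functions (your step~4) is valid and yields two-sided bounds $c_1\le G^*_{B_{4r}}(x^o,y)\le c_2$ uniformly in $y\in\overline{B_r}$, for any fixed $x^o\in\partial B_{2r}$, with constants independent of $r$.

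The one genuine gap is the ``final Harnack chain on the potentials'' (your step~6). When $K=\overline{B_r}$ the capacitary potentials $\hat u_{K,B_{4r}}$ and $\hat u^\Delta_{K,B_{4r}}$ are $L^*$- (respectively $\Delta$-) solutions only in the \emph{open} annulus $B_{4r}\setminus\overline{B_r}$, so no Harnack ball contained in that annulus can include a point of $\overline{B_r}$; the chain has no admissible starting point, and mere continuity of the potentials across $\partial B_r$ does not produce a uniform constant. Fortunately step~6 is unnecessary. Once you know $\tilde\nu:=\gamma_0\alpha\,\nu^\Delta\in\mathfrak{M}_{K,B_{4r}}$, Lemma~\ref{lem1125mon} already gives
\[
\int_K G^*_{B_{4r}}(x,y)\,d\tilde\nu(y)\le\hat u_{K,B_{4r}}(x)\quad\text{for every }x\in B_{4r},
\]
in particular for $x=x^o\in\partial B_{2r}$. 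Applying your step~4 bounds to both sides, the left is at least $c_1\gamma_0\alpha\,\nu^\Delta(K)$, while the right equals $\int_K G^*_{B_{4r}}(x^o,y)\,d\mu_{K,B_{4r}}(y)\le c_2\,\capacity(K,B_{4r})$. Taking $\nu^\Delta$ to be the Laplace capacitary measure yields $\capacity^\Delta(K,B_{4r})\lesssim\capacity(K,B_{4r})$ directly, and the reverse inequality follows by interchanging the roles of $L^*$ and $\Delta$. No comparison of the potentials at $x^o$ is needed.
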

\begin{proof}
The proof follows the same argument as that of Theorem \ref{thm_equivalence}, using the Green function estimates from Theorem \ref{thm_green_function2d} in place of those used there.
\end{proof}

The following theorem establishes the Wiener criterion for regular boundary points in two dimensions.

\begin{theorem}[Wiener criterion]		\label{thm_wiener2d}
Assume that Conditions \ref{cond1} and \ref{cond2} hold.
Let $\Omega \Subset \mathcal{B}$ be a domain, and let $x_0 \in \partial \Omega$.
Set $r_0=\frac{1}{8}\dist(x_0, \partial \mathcal{B})$.
Then $x_0$ is regular if and only if
\[
\sum_{k=0}^\infty \capacity \left(\,\overline{B_{2^{-k}r_0}(x_0)}\setminus \Omega,B_{2^{2-k}r_0}(x_0)\right)=+\infty.
\]
\end{theorem}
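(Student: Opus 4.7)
The plan is to follow the structure of the proof of Theorem \ref{thm_wiener}, adapted to the logarithmic Green's function: throughout, the fixed reference ball $\mathcal{B}$ of the higher-dimensional argument is replaced by the scale-dependent ball $B_{4r}$ demanded by the estimates of Theorem \ref{thm_green_function2d}. Write $B_r = B_r(x_0)$, $E_r = \overline{B_r} \setminus \Omega$, $\hat u_r = \hat u_{E_r, B_{4r}}$, $\mu_r = \mu_{E_r, B_{4r}}$, $r_k = 2^{-k}r_0$, and $c_k = \capacity(E_{r_k}, B_{4r_k})$. By Lemma \ref{lem_charact2d} it suffices to show $\hat u_r(x_0) = W(x_0)$ for every small $r$ iff $\sum_k c_k = +\infty$. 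A central step in both directions is the capacity restriction inequality
\[
\mu_{r_n}(E_{r_k}) \le C\, c_k \qquad \text{for all } k \ge n,
\]
proved by restricting $\mu_{r_n}$ to $E_{r_k}$, combining $G^*_{B_{4r_k}} \le G^*_{B_{4r_n}}$ on $B_{4r_k}$ with Lemma \ref{lem1125mon}, and evaluating the resulting inequality at a point $x^* \in \partial B_{2r_k}(x_0)$ where $G^*_{B_{4r_k}}(x^*, y)$ is of order one for every $y \in E_{r_k}$.

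For necessity, assuming $\sum_k c_k < +\infty$, I would partition $E_{r_n}$ into the dyadic annuli $E_{r_k} \setminus E_{r_{k+1}}$ (using $\mu_{r_n}(\{x_0\}) = 0$, which follows from $\hat u_{r_n}(x_0) < +\infty$ and $G^*(x_0,x_0) = +\infty$), apply the logarithmic upper bound $G^*_{B_{4r_n}}(x_0,y) \lesssim k-n+2$ on the $k$-th annulus, and sum by parts together with the restriction inequality to get
\[
\hat u_{r_n}(x_0) \;\lesssim\; \sum_{k \ge n} \mu_{r_n}(E_{r_k}) \;\lesssim\; \sum_{k \ge n} c_k.
\]
Choosing $n$ so large that the tail is less than $W(x_0)$ yields $\hat u_{r_n}(x_0) < W(x_0)$, hence non-regularity of $x_0$ by Lemma \ref{lem_charact2d}.

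For sufficiency, fix $\rho \in (0, r_0)$, set $v = W - u_{E_\rho, B_{4\rho}}$ and $M(t) = \sup_{\Omega \cap B_t}(v/W)$, and establish the one-step iteration
\[
M(\alpha t) \le M(t)\bigl(1 - \kappa\, \capacity(E_{\alpha t}, B_{4\alpha t})\bigr), \qquad \alpha \in (0, 1/4],\ 4\alpha t \le \rho,
\]
by comparing $v$ with the $L^*$-harmonic function $M(t)\bigl(W - \hat u_{E_{\alpha t}, B_{4\alpha t}}\bigr)$ on $\Omega \cap B_{4\alpha t}$. The boundary inequality is immediate since $\hat u_{E_{\alpha t}, B_{4\alpha t}}$ vanishes on $\partial B_{4\alpha t}$ while $v = 0$ on $\partial \Omega \cap \overline{B_{4\alpha t}} \subset E_\rho$; the lower bound $\hat u_{E_{\alpha t}, B_{4\alpha t}}(x) \ge C_0^{-1}\log(3/2)\, \capacity(E_{\alpha t}, B_{4\alpha t})$ on $\overline{B_{\alpha t}}$ comes directly from Theorem \ref{thm_green_function2d} (since $|x-y| \le 2\alpha t$ there), which after dividing by $W \le \gamma_0^{-1}$ produces the iteration. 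Iterating with $\alpha = 1/4$ and $\rho = r_0$ gives $M(r_{2i}) \le M(r_0)\exp\bigl(-\kappa \sum_{j=1}^i c_{2j}\bigr)$, and divergence of $\sum_k c_k$ passes to divergence of the even subseries $\sum_j c_{2j}$ because, combining Theorem \ref{thm1641mon} with the classical logarithmic identity for nested balls under the Laplacian, $c_k \le C c_{k-1}$ uniformly. Hence $M(r_{2i}) \to 0$, so $v \to 0$ at $x_0$ along $\Omega$; Lemmas \ref{lem1112mon}(d,e) and \ref{lem2334sun}(c) then yield $\hat u_{E_\rho, B_{4\rho}}(x_0) = W(x_0)$, and the same argument for each $\rho \in (0, r_0)$ finishes the sufficiency via Lemma \ref{lem_charact2d}.

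The principal obstacle is the capacity restriction inequality. Theorem \ref{thm_green_function2d} supplies two-sided bounds only when both $x$ and $y$ lie in $\overline{B_{r_k}}$ with reference $B_{4r_k}$, whereas the argument demands an order-one two-sided bound for $G^*_{B_{4r_k}}(x^*, y)$ with $x^* \in \partial B_{2r_k}$ and $y \in E_{r_k} \subset \overline{B_{r_k}}$. Extracting these will require a Harnack-chain argument applied to the $L^*$-harmonic function $G^*_{B_{4r_k}}(\,\cdot\,, y)$ on the annulus $B_{3r_k} \setminus \overline{B_{r_k}}$, anchored by the order-one estimate at $\partial B_{r_k}$ coming from Theorem \ref{thm_green_function2d}; the Harnack constants then depend only on universal parameters, as the annulus is uniformly thick relative to the distance to the singularity at $y$.
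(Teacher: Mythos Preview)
Your necessity argument matches the paper's essentially verbatim: dyadic decomposition of $E_{r_n}$, the Abel summation $\sum_k (k+1)\mu_{r_n}(E_{r_k}\setminus E_{r_{k+1}})\simeq\sum_k\mu_{r_n}(E_{r_k})$, and the restriction inequality $\mu_{r_n}(E_{r_k})\le C c_k$ via Lemma~\ref{lem1125mon}. One correction to your ``principal obstacle'' paragraph: Harnack for $L^*$ applied to $x\mapsto G^*_{B_{4r_k}}(x,y)$ only lets you move $x$; to get bounds on $G^*_{B_{4r_k}}(x^*,y)$ that are uniform in $y\in E_{r_k}$ you must also move $y$, which requires Harnack for $L$ (non-divergence form) applied to $y\mapsto G^*(x^*,y)=G(y,x^*)$. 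Both Harnack inequalities are available under Conditions~\ref{cond1}--\ref{cond2}, so this is only a matter of stating it correctly. The paper sidesteps the issue by citing a ready-made variant of the Green's function estimate on the larger ball $\overline{B_{3t/2}}$.

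Your sufficiency argument is genuinely different from the paper's. You run the $d\ge3$ iteration directly, comparing $v=W-u_{E_\rho,B_{4\rho}}$ with $M(t)\bigl(W-\hat u_{E_{\alpha t},B_{4\alpha t}}\bigr)$ on $\Omega\cap B_{4\alpha t}$; the constraint $B_{4\alpha t}\subset B_t$ forces $\alpha\le 1/4$, so your iteration picks up only the even-indexed capacities $c_{2j}$ and you must then argue that $\sum_j c_{2j}=+\infty$. The paper instead builds a telescoping family of auxiliary functions $v_i=m_{i-1}+(1-m_{i-1})\,u_{E_{2^{-i}r},\,B_{2^{2-i}r}}/W$, proves $\hat v_{i+1}\le\hat v_i$ on $B_{2^{1-i}r}$ by comparison, and thereby gets $1-m_i\le(1-m_{i-1})e^{-\gamma c_i}$ at \emph{every} dyadic scale; this yields the clean bound $\sup_{\Omega\cap B_t}(1-\hat u_r/W)\le\exp\bigl(-\gamma\sum_{k\le\lfloor\log_2(r/t)\rfloor}c_k\bigr)$ with no missing terms. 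Your route is closer in spirit to Lemma~\ref{lem2119thu}, while the paper's nested construction is tailored to the fact that in two dimensions the relative reference ball must scale with $E$.

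The soft spot in your argument is the step ``$c_k\le C\,c_{k-1}$ uniformly,'' which you justify only by alluding to a ``classical logarithmic identity.'' The inequality is true, but it is not an identity and it needs an argument: for the Laplacian one shows $\capacity^\Delta(K,B_{4r})\le C\,\capacity^\Delta(K,B_{8r})$ for $K\subset\overline{B_r}$ by observing that $G^\Delta_{B_{8r}}-G^\Delta_{B_{4r}}$ is harmonic in $B_{4r}$ with boundary values $O(1)$, so the capacitary measure for $(K,B_{4r})$, divided by a fixed constant, is admissible for $(K,B_{8r})$; one then transfers to $L^*$ via Theorem~\ref{thm1641mon}. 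This is a genuine lemma that should be stated and proved, not waved at. Once it is in place your approach is complete; the paper's approach trades this extra lemma for the (also nontrivial) nesting claim $\hat v_{i+1}\le\hat v_i$.
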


\begin{proof}
We use the notation
\[
B_r=:B_r(x_0),\qquad E_r:=\overline B_r\setminus \Omega,
\qquad  u_r:= u_{E_r, B_{4r}},\qquad \mu_r:=\mu_{E_r, B_{4r}}.
\]
We also write $G_r^*(x,y)$ for the Green function of $L^*$ in $B_r$.

\medskip
\noindent
\textbf{(Necessity)}
We show that $x_0$ is not regular if
\begin{equation}			\label{eq1305mon}
\sum_{k=0}^\infty \capacity \left(E_{2^{-k}r_0}, B_{2^{2-k}r_0}\right)<\infty.
\end{equation}
By Theorem \ref{thm1639mon}, we have
\begin{equation}			\label{eq1306mon}
\hat u_r(x)=\int_{E_r} G_{4r}^*(x, y)\,d\mu_r(y).
\end{equation}
For $0<\rho<r$, we have
\[
\int_{E_\rho}G_{4r}^*(x_0, y)\,d\mu_r(y)  \le \int_{E_r}G_{4r}^*(x_0, y)\,d\mu_r(y)=\hat u_{r}(x_0) \le W(x_0).
\]
Since the sets $E_\rho$ decrease to $\{x_0\}$ as $\rho \to 0$, it follows that
\[
\lim_{\rho \to 0} \int_{E_\rho}G_{4r}^*(x_0, y)\,d\mu_r(y) = \int_{\{x_0\}} G_{4r}^*(x_0,y)\,d\mu_r(y) \le W(x_0).
\]
Since $G_{4r}^*(x_0,x_0)=\infty$, we must have $\mu_r(\{x_0\})=0$.
Consequently,
\begin{equation}			\label{eq1307mon}
\int_{\{x_0\}} G_{4r}^*(x_0,y)\,d\mu_r(y)=0.
\end{equation}
We now partition $E_r$ into annular regions and use the Green function estimates from Theorem~\ref{thm_green_function2d}.
From \eqref{eq1306mon} and \eqref{eq1307mon}, we obtain
\begin{align*}
\hat u_{r}(x_0) &= \sum_{k=0}^{\infty} \int_{E_{2^{-k}r}\setminus E_{2^{-k-1}r}} G_{4r}^*(x_0, y)\,d\mu_r(y)+ \int_{\{x_0\}} G_{4r}^*(x_0,y)\,d\mu_r(y) \\
& \lesssim \sum_{k=0}^\infty \log \left(\frac{3r}{2^{-k-1}r}\right) \mu_r(E_{2^{-k}r}\setminus E_{2^{-k-1}r}) \simeq \sum_{k=0}^\infty (k+1) \mu_r(E_{2^{-k}r}\setminus E_{2^{-k-1}r}).
\end{align*}
By an elementary summation by parts, we have
\[
\sum_{k=0}^N (k+1)\mu_r(E_{2^{-k}r}\setminus E_{2^{-k-1}r})= \sum_{k=0}^N \mu_r(E_{2^{-k}r}\setminus E_{2^{-N-1}r}).
\]
Consequently,
\[
\hat u_r(x_0) \lesssim \lim_{N\to\infty}  \sum_{k=0}^N (k+1)\mu_r(E_{2^{-k}r}\setminus E_{2^{-k-1}r}) \lesssim \lim_{N\to\infty} \sum_{k=0}^N \mu_r(E_{2^{-k}r} \setminus E_{2^{-N-1}r}).
\]
Since $\mu_r$ is a nonnegative measure, we obtain
\begin{equation}			\label{eq1329mon}
\hat u_r(x_0) \lesssim \lim_{N\to\infty} \sum_{k=0}^N \mu_r(E_{2^{-k}r}) = \sum_{k=0}^{\infty} \mu_r(E_{2^{-k}r}).
\end{equation}

We claim that there exists a constant $C>0$ such that
\begin{equation}			\label{eq1308mon}
\mu_r(E_t) \le C \mu_t(E_t)=C \capacity(E_t,B_{4t}), \quad \forall t \in (0,r].
\end{equation}
Assuming \eqref{eq1308mon} and taking $r=2^{-n}r_0$ in \eqref{eq1329mon}, we obtain
\[
\hat u_{2^{-n} r_0}(x_0) \lesssim \sum_{k=0}^\infty \capacity(E_{2^{-k-n} r_0},B_{2^{2-k-n} r_0}) = \sum_{k=n}^\infty \capacity(E_{2^{-k} r_0},B_{2^{2-k} r_0}).
\]
By hypothesis \eqref{eq1305mon}, the last expression tends to zero as $n\to\infty$.
Hence we may choose $n$ sufficiently large so that
\[
\hat u_{2^{-n} r_0}(x_0)<W(x_0).
\]
Lemma~\ref{lem_charact2d} implies that $x_0$ is not regular.

It remains to prove \eqref{eq1308mon}.
Fix $0<t \le r$, and define a measure $\nu$ by
\[
\nu(E)=\mu_r(E_t \cap E).
\]
Then $\nu$ is supported on $E_t$.
Moreover, for every $x\in B_{4t}$, we have
\begin{align*} 
\int_{E_t} G_{4t}^*(x,y)\,d\nu(y) &=\int_{E_t} G_{4t}^*(x,y)\,d\mu_r(y) \le \int_{E_t} G_{4r}^*(x,y)\,d\mu_r(y)\\ &\le \int_{E_r} G_{4r}^*(x,y)\,d\mu_r(y) \le W(x),
\end{align*}
where we used the domain monotonicity of the Green function,
\[
G_{4t}^*(x,y) \le G_{4r}^*(x,y)  \quad\text{for all }\;x, y \in B_{4t}.
\]

Applying Lemma~\ref{lem1125mon}, we obtain
\[
\int_{E_t} G_{4t}^*(x,y)\,d\mu_r(y) \le \hat u_{t}(x)=\int_{E_t} G_{4t}^*(x,y)\,d\mu_t(y),\quad \forall x \in B_{4t}.
\]
Taking $x \in \partial B_{3t/2}$ in the preceding inequality and using the following variant of \eqref{estimate_green} (see \cite[Remark 3.20]{DKK25b}),
\[
C_1 \log \left(\frac{5t}{2\abs{x-y}}\right) \le G_{4t}^*(x,y)\le C_2 \log \left(\frac{4t}{\abs{x-y}}\right),\qquad x,y\in\overline B_{3t/2},\quad x\ne y,
\]
we obtain \eqref{eq1308mon}.
This completes the proof of the necessity.

\medskip
\noindent
\textbf{(Sufficiency)}
We first state the following lemma, which is similar to Lemma \ref{lem2119thu}, but adapted to the two-dimensional setting.

\begin{lemma}			\label{lem1415mon}
There exists a constant $\gamma>0$ such that, for every $t \in (0, r)$,
\[
\sup_{\Omega\cap B_t(x_0)}\left(1- \frac{\hat u_r}{W} \right)  \le \exp\left\{-\gamma\sum_{k=0}^{\lfloor \log_2(r/t) \rfloor}\capacity(E_{2^{-k}r},B_{2^{2-k}r})\right\}.
\]
\end{lemma}

\begin{proof}
To study the local behavior of $\hat u_r$, we construct a sequence of auxiliary functions as follows.
Define
\[
v_0:=\frac{u_r}{W}, \qquad \hat v_0:=\frac{\hat u_r}{W}, \qquad m_0:=\inf_{B_r}\hat v_0.
\]
For $i=1,2,\ldots$, define
\[
v_i := m_{i-1} + (1-m_{i-1})\, \frac{u_{2^{-i}r}}{W},
\]
and set
\[
m_i := \inf_{B_{2^{-i}r}} \,\hat v_i=m_{i-1} + (1-m_{i-1})\, \inf_{B_{2^{-i}r}}\, \frac{\hat u_{2^{-i}r}}{W},
\]
where $\hat v_i$ denotes the lower semicontinuous regularization of $v_i$.
Using Theorems \ref{thm1639mon} and \ref{thm_green_function2d}, together with Lemma \ref{lem02}, we obtain, for all $x \in B_{2^{-i}r}$,
\begin{equation}			\label{eq0737wed}
\frac{\hat u_{2^{-i}r}(x)}{W(x)}  \ge \gamma_0 \left(\inf_{y \in B_{2^{-i}r}}G_{2^{2-i}r}^*(x, y)\right)\, \capacity(E_{2^{-i}r},B_{2^{2-i}r})  \ge \gamma \capacity(E_{2^{-i}r},B_{2^{2-i}r}),
\end{equation}
where $\gamma:=\gamma_0 C_0^{-1} \log(3/2)>0$.
Moreover,
\[
1-\hat v_i = (1-m_{i-1}) \left(1-\frac{\hat u_{2^{-i}r}}{W}\right).
\]
Taking the supremum over $B_{2^{-i}r}$ in the preceding identity and using \eqref{eq0737wed} together with
\[
1-s\le e^{-s},
\]
we obtain
\[
1-m_i \le (1-m_{i-1}) \left(1-\gamma\capacity(E_{2^{-i}r},B_{2^{2-i}r})\right)
\le (1-m_{i-1}) \exp\left\{-\gamma\capacity(E_{2^{-i}r},B_{2^{2-i}r})\right\}.
\]
Iterating this inequality gives
\begin{equation}		\label{eq1310mon}
\sup_{B_{2^{-i}r}}\, (1-\hat v_i) = 1-m_i
\le \exp\left\{ -\gamma \sum_{k=0}^{i} \capacity(E_{2^{-k}r},B_{2^{2-k}r}) \right\}.
\end{equation}
We claim that
\begin{equation}	\label{eq1311mon}
\hat v_j  \le \frac{\hat u_r}{W} \quad \text{in }\; B_{2^{2-j}r},\quad j=1,2,\ldots .
\end{equation}
Assuming \eqref{eq1311mon} for the moment, let
\[
n=\lfloor \log_2(r/t) \rfloor
\]
so that
\[
2^{-n-1}r < t \le 2^{-n}r.
\]
Then, by \eqref{eq1310mon} and \eqref{eq1311mon},
\[
\sup_{\Omega\cap B_t}\left(1- \frac{\hat u_r}{W}\right)  \leq \, \sup_{B_{2^{-n}r}}\,(1-\hat v_n)  \le  \exp\left\{-\gamma\sum_{k=0}^{n}\capacity(E_{2^{-k}r},B_{2^{2-k}r})\right\}.
\]
This proves the lemma.
To prove \eqref{eq1311mon}, it is enough to show that
\begin{equation}	\label{eq1106wed}
\hat v_i \ge \hat v_{i+1}\quad\text{in }\;B_{2^{1-i}r}, \quad i=0,1,2,\ldots,
\end{equation}
since $\hat v_0= \hat u_r / W$.
We observe that $v_i W$ is characterized by
\[
v_i(x)W(x)=\inf_{w \in \mathscr{F}_i} w(x),\quad x \in B_{2^{2-i}r},
\]
where
\[
\mathscr{F}_i=\left\{ w \in \mathfrak{S}^+(B_{2^{2-i}r}):  w\ge m_{i-1} W \;\text{ in }\;B_{2^{2-i}r},\;\; w \ge W \;\text{ on }\;E_{2^{-i}r}\right\}.
\]

It follows from Lemma \ref{lem1112mon}(d) that
\[
L^* (\hat v_{i+1}W)=0 \quad \text{in }\; B_{2^{1-i}r}\setminus E_{2^{-i-1}r}.
\]
Moreover, for $w\in\mathscr F_i$, we have the following observations:
\begin{enumerate}[label=(\roman*)]
\item
For all $x\in E_{2^{-i-1}r}$,
\[
\hat v_{i+1}(x)W(x) \le W(x) \le w(x).
\]
\item
If $x \in \partial E_{2^{-i-1}r}$, then
\[
\limsup_{y\to x, \,y \in B_{2^{1-i}r} \setminus E_{2^{-i-1}r}} \hat v_{i+1}(y)W(y)
\le W(x) \le w(x) \le 
\liminf_{y\to x,\, y\in B_{2^{1-i}r} \setminus E_{2^{-i-1}r}}  w(y).
\]
\item
If $x \in \partial B_{2^{1-i}r}$, then, since $\hat u_{2^{-i-1}r}$ vanishes on $\partial B_{2^{1-i}r}$, the identity
\[
\hat v_{i+1} W- m_i W = (1-m_i)\hat u_{2^{-i-1}r}
\]
gives
\[
\limsup_{y\to x,\, y \in B_{2^{1-i}r} \setminus E_{2^{-i-1}r}} \hat v_{i+1}(y) W(y)
= m_i W(x) \le
\liminf_{y \to x,\, y \in B_{2^{1-i}r} \setminus E_{2^{-i-1}r}}  w(y).
\]
\end{enumerate}

Thus, by the comparison principle,
\[
\hat v_{i+1}W \le w\quad \text{in }\;B_{2^{1-i}r} \setminus E_{2^{-i-1}r}.
\]
Since the same inequality also holds on $E_{2^{-i-1}r}$, we have
\[
\hat v_{i+1}W \le w \quad \text{in }\;B_{2^{1-i}r}.	
\]
Taking the infimum over all $w \in \mathscr F_i$ and then passing to the lower semicontinuous regularization, we obtain
\[
\hat v_{i+1}W\le \hat v_i W \;\text{ in }\;B_{2^{1-i}r}.
\]
Since $W>0$, this proves \eqref{eq1106wed} and completes the proof of the lemma.	
\end{proof}

Lemma~\ref{lem1415mon} implies that
\[
\liminf_{x \rightarrow x_0} \hat u_r(x)=W(x_0)
\]
for all sufficiently small $r>0$.
Consequently, by Lemma \ref{lem2334sun}(c), we have
\[
\hat u_r(x_0)=W(x_0).
\]
Therefore, Lemma~\ref{lem_charact2d} implies that $x_0$ is regular.

This completes the proof of sufficiency, and hence the proof of the theorem.
\qedhere
\end{proof}

The following theorems are immediate consequence of Theorems \ref{thm_wiener2d} and \ref{thm1641mon}.

\begin{theorem}		\label{thm1510mon}
Assume that Conditions \ref{cond1} and \ref{cond2} hold.
Let $\Omega$ be a bounded open subset of $\mathbb R^2$.
Then a point $x_0\in\partial\Omega$ is regular with respect to $L^*$ if and only if it is regular with respect to the Laplacian.
\end{theorem}

\begin{theorem}		\label{thm1515mon}
Assume that Conditions \ref{cond1} and \ref{cond2} hold.
Let $\Omega$ be a bounded regular domain in $\mathbb R^2$.
Then, for every $f\in C(\partial\Omega)$, the Dirichlet problem
\[
Lu=0 \;\text{ in }\;\Omega, \quad u=f \;\text{ on }\;\partial\Omega
\]
has a unique solution $u\in C(\overline\Omega)$.
\end{theorem}


\end{document}